\documentclass[11pt]{article}
\usepackage[margin=1in]{geometry}
\usepackage{amsmath,amssymb,amsthm}
\usepackage{enumitem}
\usepackage{hyperref}
\usepackage[dvipsnames]{xcolor}
\usepackage{tikz}
\usetikzlibrary{decorations.pathreplacing,arrows.meta,calc,patterns,positioning}
\usepackage{booktabs}
\usepackage{graphicx}
\usepackage{subcaption}

\hypersetup{colorlinks=true, linkcolor=blue, citecolor=blue, urlcolor=blue}
\allowdisplaybreaks

\newtheorem{theorem}{Theorem}[section]
\newtheorem{proposition}[theorem]{Proposition}
\newtheorem{lemma}[theorem]{Lemma}
\newtheorem{corollary}[theorem]{Corollary}
\theoremstyle{definition}

\newcommand{\R}{\mathbb{R}}
\newcommand{\E}{\mathbb{E}}
\newcommand{\Prb}{\mathbb{P}}
\newcommand{\TT}{\mathbb{T}}
\newcommand{\Bbox}{\mathcal{B}}
\newcommand{\Dd}{\mathcal{D}}
\newcommand{\corereg}{\mathcal{D}_\mathrm{core}}
\newcommand{\RePart}{\operatorname{Re}}
\newcommand{\ImPart}{\operatorname{Im}}
\newcommand{\diag}{\operatorname{diag}}
\newcommand{\core}{G_{\mathrm{core}}}
\newcommand{\Ahat}{\hat A_{n,4t}}

\title{Counting partial Hadamard matrices in the cubic regime}
\author{Damek Davis\thanks{Department of Statistics and Data Science, The Wharton School, University of Pennsylvania; \texttt{damek@wharton.upenn.edu}. Research supported by NSF DMS award 2523384.}}
\date{}

\begin{document}
\maketitle

\begin{abstract}
We give a precise asymptotic formula for the number of \(n\times 4t\) partial Hadamard matrices in the regimes \(t/n^3\to\infty\) and \(t/n^3\to\Theta\) for sufficiently large fixed \(\Theta\). This strengthens earlier results of de~Launey and Levin, who obtained the asymptotic for \(t/n^{12}\to\infty\), and of Canfield, who extended this to \(t/n^4\to\infty\).
\end{abstract}

\section{Introduction}

The Hadamard conjecture asks whether, for every positive integer \(n\) divisible by~\(4\), there exists an \(n\times n\) matrix \(H\) with entries in \(\{\pm1\}\) satisfying \(HH^\top = nI\). Despite sustained effort since Sylvester's recursive construction \cite{Syl1867} and Paley's finite-field families \cite{Pal1933}, the conjecture remains open; we refer to Horadam~\cite{Hor2012} and Seberry--Yamada~\cite{SY2020} for background on Hadamard matrices, to Tressler~\cite{Tre2004} and Browne et al.~\cite{BEHOC2021} for surveys, and to Cati--Pasechnik~\cite{CP2024} for a recent computational database.

Rather than constructing full Hadamard matrices, one can study partial ones: an \(n\times t\) matrix with entries in \(\{\pm 1\}\) is a \emph{partial Hadamard matrix} if its rows are pairwise orthogonal. Partial Hadamard matrices of width nearly~\(2n\) are known to exist for large~\(n\) by combining constructions from combinatorial design theory~\cite{Crai1995,dLG2001} with analytic number theory~\cite{GS2008} (see \cite[Theorem~A]{DL10}). While these results settle existence for widths close to~\(2n\), a finer question is to determine how many partial Hadamard matrices there are as a function of~\(n\) and~\(t\).

De~Launey and Levin~\cite{DL10} introduced a Fourier-analytic framework that answers this question for large~\(t\). Their approach encodes the row-orthogonality conditions for an \(n\times t\) matrix through the \(d=\binom{n}{2}\) pairwise products \(y_iy_j\): writing \(Z(y)=(y_iy_j)_{1\le i<j\le n}\in\{\pm1\}^d\), a matrix \(Y=[y^{(1)}\cdots y^{(t)}]\) with \(\pm1\) entries is partial Hadamard if and only if \(\sum_{r=1}^t Z(y^{(r)})=0\); see Figure~\ref{fig:walk}. Counting partial Hadamard matrices therefore reduces to counting returns to the origin of the random walk \(S_t=\sum_{r=1}^t Z(\xi^{(r)})\) on~\(\mathbb{Z}^d\), where \(\xi^{(1)},\xi^{(2)},\dots\) are i.i.d.\ uniform on~\(\{\pm1\}^n\). If \(N_{n,t}\) denotes the number of \(n\times t\) partial Hadamard matrices,
\begin{equation}\label{eq:counting-identity}
  N_{n,t} \;=\; 2^{nt}\,\Prb(S_t=0),
\end{equation}
and the Fourier inversion formula \cite[P3, p.~57]{Spi1976} gives
\begin{equation}\label{eq:fourier-inversion}
  \Prb(S_t=0)
  \;=\;
  (2\pi)^{-d}\int_{\TT^d}\psi(\lambda)^t\,d\lambda,
  \qquad
  \psi(\lambda)=\E[e^{i\lambda\cdot Z(\xi)}].
\end{equation}
In particular, the Hadamard conjecture is equivalent to the positivity of the integral~\eqref{eq:fourier-inversion} at \(t=n\) for every $n$ divisible by $4$.

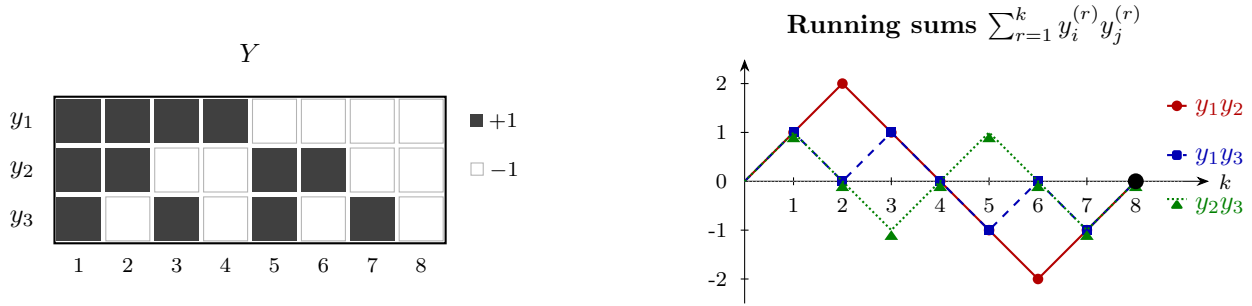
\begin{figure}[t]
\centering
\begin{tikzpicture}[x=0.65cm, y=0.65cm]
  \begin{scope}[yshift=-0.85cm]
  \node[above, font=\small\bfseries] at (4.5,4) {$Y$};
  \node[left, font=\small] at (0.3,3) {$y_1$};
  \node[left, font=\small] at (0.3,2) {$y_2$};
  \node[left, font=\small] at (0.3,1) {$y_3$};
  \foreach \c/\ra/\rb/\rc in {
    1/1/1/1, 2/1/1/0, 3/1/0/1, 4/1/0/0,
    5/0/1/1, 6/0/1/0, 7/0/0/1, 8/0/0/0} {
    \ifnum\ra=1
      \fill[black!75] (\c-0.45,2.55) rectangle ++(0.9,0.9);
    \else
      \fill[white] (\c-0.45,2.55) rectangle ++(0.9,0.9);
      \draw[gray!60] (\c-0.45,2.55) rectangle ++(0.9,0.9);
    \fi
    \ifnum\rb=1
      \fill[black!75] (\c-0.45,1.55) rectangle ++(0.9,0.9);
    \else
      \fill[white] (\c-0.45,1.55) rectangle ++(0.9,0.9);
      \draw[gray!60] (\c-0.45,1.55) rectangle ++(0.9,0.9);
    \fi
    \ifnum\rc=1
      \fill[black!75] (\c-0.45,0.55) rectangle ++(0.9,0.9);
    \else
      \fill[white] (\c-0.45,0.55) rectangle ++(0.9,0.9);
      \draw[gray!60] (\c-0.45,0.55) rectangle ++(0.9,0.9);
    \fi
  }
  \draw[thick] (0.5,0.5) rectangle (8.5,3.5);
  \foreach \c in {1,...,8} {
    \node[below, font=\scriptsize] at (\c,0.4) {\c};
  }
  \node[right, font=\scriptsize] at (8.8,3) {%
    \tikz{\fill[black!75] (0,0) rectangle (0.25,0.25);}\;$+1$};
  \node[right, font=\scriptsize] at (8.8,2) {%
    \tikz{\fill[white] (0,0) rectangle (0.25,0.25);
           \draw[gray!60] (0,0) rectangle (0.25,0.25);}\;$-1$};
  \end{scope}

  \begin{scope}[xshift=9.5cm, yshift=0.3cm]
    \node[above, font=\small\bfseries] at (4.5,2.6)
      {Running sums $\textstyle\sum_{r=1}^k y_i^{(r)}y_j^{(r)}$};
    \draw[-{Stealth}] (0,0) -- (9.5,0) node[right, font=\scriptsize] {$k$};
    \draw[-{Stealth}] (0,-2.5) -- (0,2.5) node[above, font=\scriptsize] {};
    \foreach \y in {-2,-1,1,2} {
      \draw (0.08,\y) -- (-0.08,\y);
      \node[left, font=\scriptsize] at (-0.15,\y) {\y};
    }
    \node[left, font=\scriptsize] at (-0.15,0) {$0$};
    \foreach \x in {1,...,8} {
      \draw (\x,0.08) -- (\x,-0.08);
      \node[below, font=\scriptsize] at (\x,-0.15) {\x};
    }
    \draw[gray!40, densely dotted] (0,0) -- (9,0);
    \draw[thick, red!70!black]
      (0,0) -- (1,1) -- (2,2) -- (3,1) -- (4,0)
      -- (5,-1) -- (6,-2) -- (7,-1) -- (8,0);
    \foreach \x/\y in {1/1,2/2,3/1,4/0,5/-1,6/-2,7/-1,8/0} {
      \fill[red!70!black] (\x,\y) circle (2pt);
    }
    \draw[thick, dashed, blue!70!black]
      (0,0) -- (1,1) -- (2,0) -- (3,1) -- (4,0)
      -- (5,-1) -- (6,0) -- (7,-1) -- (8,0);
    \foreach \x/\y in {1/1,2/0,3/1,4/0,5/-1,6/0,7/-1,8/0} {
      \fill[blue!70!black] (\x-0.1,\y-0.1) rectangle ++(0.2,0.2);
    }
    \draw[thick, densely dotted, green!50!black]
      (0,0) -- (1,1) -- (2,0) -- (3,-1) -- (4,0)
      -- (5,1) -- (6,0) -- (7,-1) -- (8,0);
    \foreach \x/\y in {1/1,2/0,3/-1,4/0,5/1,6/0,7/-1,8/0} {
      \fill[green!50!black] (\x,\y) -- ++(-0.14,-0.2) -- ++(0.28,0) -- cycle;
    }
    \node[right, font=\small, red!70!black,
      fill=white, inner sep=2pt, rounded corners=1pt] at (8.5,1.5)
      {\tikz{\draw[thick,red!70!black](0,0)--(0.4,0);
             \fill[red!70!black](0.2,0) circle(2pt);}\;$y_1 y_2$};
    \node[right, font=\small, blue!70!black,
      fill=white, inner sep=2pt, rounded corners=1pt] at (8.5,0.5)
      {\tikz{\draw[thick,dashed,blue!70!black](0,0)--(0.4,0);
             \fill[blue!70!black](0.1,-0.1) rectangle ++(0.2,0.2);}\;$y_1 y_3$};
    \node[right, font=\small, green!50!black,
      fill=white, inner sep=2pt, rounded corners=1pt] at (8.5,-0.5)
      {\tikz{\draw[thick,densely dotted,green!50!black](0,0)--(0.4,0);
             \fill[green!50!black](0.2,0)--++(-0.14,-0.2)--++(0.28,0)--cycle;}\;$y_2 y_3$};
    \fill[black] (8,0) circle (3pt);
  \end{scope}
\end{tikzpicture}
\caption{A \(3\times 8\) partial Hadamard matrix (left) and the running pairwise-product sums (right). The matrix is partial Hadamard exactly when all three sums return to zero at $k=8$.}\label{fig:walk}
\end{figure}

Using this framework, De~Launey and Levin~\cite[Theorem~2]{DL10} proved that for every fixed \(n\ge 2\), the number of \(n\times 4t\) partial Hadamard matrices satisfies
\begin{equation}\label{eq:fixed-n}
  N_{n,4t} \;=\; [1+o(1)]\,A_{n,4t}
  \qquad\text{as } t\to\infty,
\end{equation}
where \(A_{n,s}:=2^{ns+2d-n+1}(2\pi s)^{-d/2}\).
This gives both existence for large~\(t\) and a precise asymptotic count. Since the Hadamard conjecture is equivalent to \(N_{n,n}>0\) whenever $n$ is divisible by~$4$, one can also ask what happens when \(n\) grows with~\(t\), potentially at a slower rate: does \(N_{n,4t}\sim A_{n,4t}\) still hold, and if so, with what corrections? Two prior works address this in the regime \(t/n^\alpha\to\infty\):
\begin{enumerate}[label=(\roman*),nosep]
\item De~Launey and Levin's proofs~\cite[Theorem~4.1]{DL10} give \(N_{n,4t}\sim A_{n,4t}\) as \(t/n^{12}\to\infty\), though this is never explicitly stated.
\item Canfield~\cite{Can} established the same asymptotic as \(t/n^4\to\infty\) in unpublished work.
\end{enumerate}

Our main result is a first-order expansion that extends these to \(\alpha=3\) and reveals the leading correction:

\begin{theorem}\label{thm:main-intro}
There exist \(C_0,c>0\) such that for all sufficiently large \(n\) and \(t\ge C_0 n^3\),
\[
  \frac{N_{n,4t}}{A_{n,4t}}
  =1-\frac{\binom{n}{3}}{8t}
  +O\!\left(\frac{n^2}{t}+\frac{n^{5/2}}{t^{3/2}}+\frac{n^6}{t^2}+e^{-cn^2}\right).
\]
\end{theorem}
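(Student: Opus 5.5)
We start from the Fourier representation \eqref{eq:fourier-inversion} with $s=4t$ steps. The first step is to account for the symmetries of $\psi$. Because $Z(\xi)\in\{\pm1\}^d$ satisfies parity constraints (every triangle product $Z_{ij}Z_{jk}Z_{ik}=1$, and all entries are congruent mod~2), $|\psi|$ attains the value $1$ on a finite subgroup $L\subset\TT^d$ of maximizers, of size $2^{2d-n+1}$ once the $(2\pi)^{-d}$ normalization is included. This is the origin of the factor $2^{2d-n+1}$ in $A_{n,s}$.

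Step 1 checks that when $s=4t$ each maximizer $\lambda_0\in L$ satisfies $\psi(\lambda+\lambda_0)^{s}=\psi(\lambda)^{s}$, so the characters have unit modulus to the power~$4t$. With that in hand, \eqref{eq:fourier-inversion} becomes $|L|$ times an integral over a fundamental box $\Bbox$ around $0$.

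Inside $\Bbox$ we split into a core region $\corereg=\{\|\lambda\|\le \rho\}$ and its complement. The natural scale is $\rho\approx n^{-1}\sqrt{\log\cdot}$, or a slightly larger threshold chosen relative to $t^{-1/2}$.

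Step 2 handles the core by a cumulant expansion. Write $\log\psi(\lambda)=-\tfrac12|\lambda|^2+\kappa_3(\lambda)+\kappa_4(\lambda)+\dots$. The covariance of $Z$ is the identity, since the $Z_{ij}$ are pairwise independent.
\begin{itemize}
\item The third cumulant is $\kappa_3(\lambda)=-\tfrac{i}{6}\E[(\lambda\cdot Z)^3]$. It equals $-i\sum_{\text{triangles}}\lambda_{ij}\lambda_{jk}\lambda_{ik}$ up to the constant $1$, because $\E[Z_{ij}Z_{jk}Z_{ik}]=1$.
\item The fourth cumulant involves $4$-cycles and pairings.
\end{itemize}
Substituting $\lambda=u/\sqrt{s}$ and expanding $e^{s\kappa_3+s\kappa_4}$ to second order, the odd term vanishes against the Gaussian. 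The $1/s$ correction is then $\E[s\kappa_4]+\tfrac12\E[(s\kappa_3)^2]$ under $u\sim N(0,I_d)$.

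The square of the triangle sum contributes $-\tfrac{1}{2s}\cdot\binom n3\cdot(\text{const})$, since distinct triangles are orthogonal under the Gaussian. The constant must come out to $1/2$ so as to produce $-\binom n3/(8t)=-\binom n3/(2s)$. The $4$-cycle terms have Gaussian mean zero, because each cycle uses distinct coordinates. The remaining diagonal pieces of $\kappa_4$ are $O(d/s)=O(n^2/t)$.

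The Taylor remainders are controlled as follows:
\begin{itemize}
\item $s^{-3/2}$ times fifth-order and mixed $\kappa_3\kappa_4$ terms with nonzero Gaussian moments give $O(n^{5/2}/t^{3/2})$ after counting.
\item $(s\kappa_3)^4$-type terms give $O(n^6/t^2)$.
\end{itemize}
Gaussian tail truncation outside the core gives $e^{-cn^2}$-type terms, since $d\asymp n^2$.

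Step 3 bounds the complement $\Bbox\setminus\corereg$, and this is the step I expect to be the main obstacle. We need $|\psi(\lambda)|^{s}$ to be negligible there relative to $(2\pi s)^{-d/2}$, and this needs to hold already at $t\asymp n^3$ rather than $n^4$.

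I would split the complement into two parts.
\begin{itemize}
\item An intermediate shell $\rho\le\|\lambda\|\le\delta$ with small constant $\delta$. There I would use $|\psi(\lambda)|\le \exp(-c|\lambda|^2)$, which holds when $\lambda\cdot Z$ satisfies a quadratic-form anti-concentration estimate: $\lambda\cdot Z(\xi)=\tfrac12(\xi^\top\Lambda\xi-\operatorname{tr})$ is a Rademacher chaos with Hanson–Wright-type characteristic function bounds.
\item The far region, far from $L$. There I would show $|\psi(\lambda)|\le 1-c/n$ or better, using a decoupling argument: $|\E e^{i\xi^\top\Lambda\xi/2}|^2\le\E_{\xi,\xi'}\cos(\ldots)$ reduces the problem to linear forms in $\xi$. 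Powering by $s\ge C_0n^3$ then yields $e^{-cs/n}\ll 2^{-d}s^{-d/2}$ once $C_0$ is large.
\end{itemize}

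The delicate point is the transition region where $\|\Lambda\|_{op}$ is large but $\|\Lambda\|_F$ is moderate, where the naive bound loses factors of $n$. I would first try an entropy-versus-decay count: cover the shell by regions according to the decoupled linear-form norms and balance volume against $|\psi|^{s}$. The threshold $t\gtrsim n^3$ should emerge precisely from this balance.
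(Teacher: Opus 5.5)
Steps~1--2 broadly track the paper: the reduction via the maximizer lattice, the cumulant expansion, and the triangle variance $\tfrac12\E[(4tT)^2]=\binom n3/(8t)$ as the source of the correction. Step~3 has a genuine gap, exactly where you expected trouble.

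\textbf{The far-region rate is too weak.} The contraction you aim for, $|\psi|\le 1-c/n$, does not suffice at $t\asymp n^3$. Each even cell contributes far-region volume up to $(\pi/2)^d$, to be compared with $\Ahat=K_n(\pi/(2t))^{d/2}$. So you need $(\pi t/2)^{d/2}\sup|\psi|^{4t}\to0$. With $\sup|\psi|\le 1-c/n$ and $t=C_0n^3$, the logarithm of the resulting bound is
$\tfrac{n^2}{4}\bigl[3\log n+\log(\pi C_0/2)\bigr]-4cC_0n^2+O(n\log n)$, which tends to $+\infty$.

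The $\log s$ hidden in $s^{-d/2}$ is not beaten by enlarging $C_0$. Your claim ``$e^{-cs/n}\ll 2^{-d}s^{-d/2}$ once $C_0$ is large'' is therefore false: a rate $1-c/n$ only closes for $t\gg n^3\log n$. You need a deficit $1-|\psi|$ of order at least $n^{-1}\log n$, uniformly on the far region. In your fundamental-domain formulation that region also contains the odd cells. The decoupling/entropy count that should deliver this is left as a plan, not an argument.

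\textbf{What the paper does instead.} It splits the far shell $\Bbox_{\pi/4}\setminus\Dd_r$ by the row influences $I_k(\lambda)=\sum_{i\ne k}\lambda_{\{i,k\}}^2$.
\begin{itemize}
\item When $J=\sum_k I_k^{3/2}$ is small: a Lindeberg replacement (Corollary~\ref{cor:weak-comparison}) gives $|\psi-\psi_G|\le\tfrac32 J$. The Gaussian chaos satisfies $|\psi_G(\lambda)|=\det(I+M^2)^{-1/4}\le(1+2\|\lambda\|^2)^{-1/4}$. Hence $|\psi|$ is bounded by an absolute constant below~$1$.
\item When $I_{\max}\ge 1$: the cosine-product bound (Lemma~\ref{fact:psi-sq}) gives a constant contraction.
\item In between: $J>\eta_r$ forces $I_{\max}\ge(\eta_r/n)^{2/3}$, so $|\psi|^{4t}\le e^{-a_r t n^{-2/3}}$. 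This beats the volume once $t\gg n^{8/3}\log t$.
\item Odd cells: $|\psi|^2\le\tfrac12$, from the cosine-product bound at an odd-degree vertex.
\end{itemize}
So the $n^3$ threshold does not emerge from the residual balance, as you predict. It comes from the core.

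\textbf{Two further repairs on the core.}
\begin{itemize}
\item Your radius $\rho\approx n^{-1}\sqrt{\log}$ is mis-scaled. The weight $e^{-2t\|\lambda\|^2}$ puts its mass at $\|\lambda\|^2\approx d/(4t)$, i.e.\ $\|\lambda\|\asymp n^{-1/2}$ when $t\asymp n^3$. The core must therefore be $\{\|\lambda\|^2\lesssim d/t\}$.
\item On that core, $4tQ$ can be positive and of order $d^2/t\asymp n$ pointwise. For example, when all $\lambda_e$ are equal, $Q\approx s(\lambda)^2/2$. So ``expanding $e^{s\kappa_4}$ to second order'' needs an exponential-moment bound for $e^{4tQ}$ against the Gaussian weight, which your proposal does not provide. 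The paper obtains it by vertex peeling plus the determinant formula (Proposition~\ref{prop:quartic-bounds}), then strips the phase factors by Cauchy--Schwarz.
\end{itemize}
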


\noindent The interesting regime is \(n\to\infty\) with \(t\ge C_0 n^3\), since fixed~\(n\) is already covered by~\cite{DL10}. When \(t/n^3\to\infty\) all corrections vanish, recovering \(N_{n,4t}\sim A_{n,4t}\) (Corollary~\ref{cor:uniform}). When \(t=\Theta n^3\) with \(\Theta\ge C_0\) fixed, the leading correction is \(\binom{n}{3}/(8\Theta n^3)\approx 1/(48\Theta)\); the remaining error is \(O(\Theta^{-2}+1/n+e^{-cn^2})\), so for \(\Theta\) large and \(n\) large the correction dominates the error. Thus the expansion captures the first deviation of \(N_{n,4t}\) from the scale~\(A_{n,4t}\). Below \(\alpha=3\) the asymptotics of \(N_{n,4t}\) are open (Figure~\ref{fig:regimes}).

As in~\cite{DL10}, the proof proceeds by estimation of the Fourier integral~\eqref{eq:fourier-inversion}, split into a \emph{primary} contribution from neighborhoods of the points where \(|\psi|=1\) and a \emph{residual} over the rest of the torus. Beyond improving the exponent, a motivation for pushing the Fourier-analytic approach is that it provides a direct, self-contained route to the counting asymptotics, and it is valuable to understand the limits of these tools. We describe the proof structure below, then compare with the approaches of~\cite{DL10} and~\cite{Can} in Section~\ref{sec:comparison}.

\begin{figure}[t]
\centering
\begin{tikzpicture}
  \def\xone{0}       
  \def\xthree{3}     
  \def\xfour{5.2}    
  \def\xbreakL{7}    
  \def\xbreakR{8}    
  \def\xtwelve{10.2} 
  \def\xend{12.5}    

  \draw[thick] (\xone,0) -- (\xbreakL,0);
  \draw[thick] (\xbreakR,0) -- (\xend,0);
  \draw[-{Stealth},thick] (\xend,0) -- (\xend+0.6,0)
    node[right] {\(\alpha\)};

  \draw[thick] (\xbreakL,0) -- ++(0.25,0.15) -- ++(0.25,-0.15)
    -- ++(0.25,0.15) -- ++(0.25,-0.15);

  \fill[gray!12] (\xone,-0.3) rectangle (\xthree,0.3);
  \draw[pattern=north east lines, pattern color=gray!30, draw=none]
    (\xone,-0.3) rectangle (\xthree,0.3);

  \draw[thick] (\xone,0.13) -- (\xone,-0.13);
  \node[below, font=\small] at (\xone,-0.18) {\(1\)};
  \draw[thick] (\xthree,0.13) -- (\xthree,-0.13);
  \node[below, font=\small] at (\xthree,-0.18) {\(3\)};
  \draw[thick] (\xfour,0.13) -- (\xfour,-0.13);
  \node[below, font=\small] at (\xfour,-0.18) {\(4\)};
  \draw[thick] (\xtwelve,0.13) -- (\xtwelve,-0.13);
  \node[below, font=\small] at (\xtwelve,-0.18) {\(12\)};

  \fill (\xthree,0) circle (2.5pt);
  \fill (\xfour,0) circle (2.5pt);
  \fill (\xtwelve,0) circle (2.5pt);

  \node[above=8pt, font=\footnotesize\bfseries] at (\xthree,0.13)
    {Theorem~\ref{thm:main-intro}};
  \node[above=8pt, font=\footnotesize] at (\xfour,0.13)
    {Canfield};
  \node[above=8pt, font=\footnotesize] at (\xtwelve,0.13)
    {de~Launey--Levin};

  \draw[decorate, decoration={brace, amplitude=5pt, mirror, raise=14pt}]
    (\xthree,-0.3) -- (\xend+0.4,-0.3)
    node[midway, below=20pt, font=\small] {\(N_{n,4t}\sim A_{n,4t}\)};

  \draw[decorate, decoration={brace, amplitude=5pt, mirror, raise=14pt}]
    (\xone,-0.3) -- (\xthree,-0.3)
    node[midway, below=20pt, font=\small] {open};
\end{tikzpicture}
\caption{Each marker indicates the smallest~\(\alpha\) for which the corresponding work establishes \(N_{n,4t}\sim A_{n,4t}\) as \(t/n^\alpha\to\infty\). Corollary~\ref{cor:uniform} extends this to \(\alpha=3\). Theorem~\ref{thm:main-intro} further shows that \(N_{n,4t}/A_{n,4t}\) has a nonvanishing correction when \(t/n^3\) converges to a constant.}\label{fig:regimes}
\end{figure}
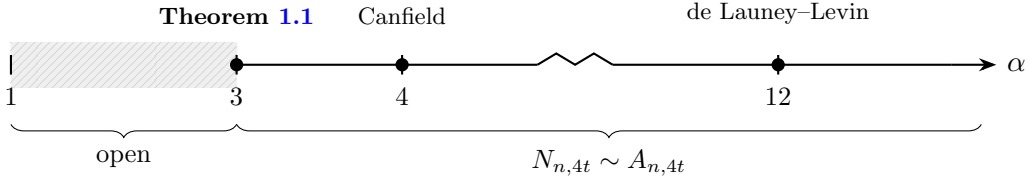

\subsection{Proof overview}\label{sec:proof-overview}

Figure~\ref{fig:composite} illustrates the decomposition. Since partial Hadamard matrices with \(n\ge 3\) rows exist only when the number of columns is divisible by~\(4\), we write the number of columns as~\(4t\) throughout; the integral to evaluate is
\begin{equation}\label{eq:fourier-4t}
  \Prb(S_{4t}=0)
  \;=\;
  (2\pi)^{-d}\int_{\TT^d}\psi(\lambda)^{4t}\,d\lambda.
\end{equation}

\paragraph{Integral decomposition.}
The integrand \(\psi(\lambda)^{4t}\) is largest near the lattice
\[
  \Lambda=\{\lambda\in\TT^d:|\psi(\lambda)|=1\}.
\]
As shown in~\cite[Lemmas~2.2--2.3]{DL10}, each coordinate of every \(\lambda\in\Lambda\) lies in \(\{0,\pm\pi/2,\pi\}\), \(|\Lambda|=2^{2d-n+1}\), and \(\psi(\lambda)^{4t}=1\) for all \(\lambda\in\Lambda\). We place a small box \(\Bbox_\delta(\lambda)=\lambda+[-\delta,\delta]^d\) around each lattice point (Figure~\ref{fig:composite}(a), blue) and call the remainder \(R_\delta=\TT^d\setminus\bigcup_{\lambda\in\Lambda}\Bbox_\delta(\lambda)\) the \emph{residual}.

A multiplicative identity, \(\psi(\lambda+\mu)=\psi(\lambda)\psi(\mu)\) for $\lambda \in \Lambda$ and \(\mu\in[-\pi/4,\pi/4]^d\), together with \(\psi(\lambda)^{4t}=1\), shows that the integral over every primary box equals the integral over the box at the origin. Summing over \(\Lambda\) gives the primary-secondary decomposition~\cite[Proposition~2.1]{DL10}:
\begin{equation}\label{eq:primary-secondary}
  \Prb(S_{4t}=0)
  \;=\;
  \underbrace{|\Lambda|\,(2\pi)^{-d}\int_{\Bbox_\delta}\psi(\mu)^{4t}\,d\mu}_{\text{primary}}
  \;+\;
  \underbrace{(2\pi)^{-d}\int_{R_\delta}\psi(\gamma)^{4t}\,d\gamma}_{\text{residual}}.
\end{equation}
Writing \(K_n:=|\Lambda|(2\pi)^{-d}=2^{2d-n+1}(2\pi)^{-d}\) and splitting the primary integral at the core boundary \(\corereg=\{\|\mu\|^2\le d/t\}\) refines this into a three-term decomposition:
\begin{equation}\label{eq:three-way}
  \Prb(S_{4t}=0)
  =
  \underbrace{K_n\!\int_{\corereg}\psi(\mu)^{4t}\,d\mu}_{\text{core}}
  +\underbrace{K_n\!\int_{\Bbox_\delta\setminus\corereg}\psi(\mu)^{4t}\,d\mu}_{\text{off-core}}
  +\underbrace{(2\pi)^{-d}\!\int_{R_\delta}\psi(\gamma)^{4t}\,d\gamma}_{\text{residual}}.
\end{equation}
Since \(\Prb(S_{4t}=0)\) is real, the proof reduces to three estimates, where \(\Ahat:=A_{n,4t}\cdot 2^{-4nt}\):
\begin{enumerate}[label=(\roman*),nosep]
\item the core contributes \(\bigl(1-\binom{n}{3}/(8t)+\text{lower order}\bigr)\Ahat\) for \(t\ge C_0 n^3\) \hfill(Section~\ref{sec:core}),
\item the off-core contributes \(o(\Ahat)\) as \(t/(n^{8/3}\log t)\to\infty\) \hfill(Section~\ref{sec:residual}),
\item the residual contributes \(o(\Ahat)\) as \(t/(n^{8/3}\log t)\to\infty\) \hfill(Section~\ref{sec:residual}).
\end{enumerate}

\paragraph{Primary term.}
Near the origin, \(\psi\) is close to a Gaussian characteristic function. More precisely, expanding \(\log\psi(\mu)\) to sixth order produces a factorization
\[
  \psi(\mu)^{4t}
  \;=\;
  \underbrace{e^{-2t\|\mu\|^2}}_{\text{Gaussian}}
  \;\cdot\;
  \underbrace{e^{-4itT(\mu)}}_{\text{cubic phase}}
  \;\cdot\;
  (\text{quartic, quintic, and sixth-order terms}),
\]
where \(T(\mu)=\sum_{\text{triangles }\tau}\prod_{e\in\tau}\mu_e\) is a cubic form summing over the \(\binom{n}{3}\) triangles of the complete graph on \(n\) vertices. The Gaussian factor \(e^{-2t\|\mu\|^2}\) concentrates the integrand on the \emph{core} \(\corereg=\{\|\mu\|^2\le d/t\}\) (Figure~\ref{fig:composite}(c), red), where the higher-order terms are small. Outside the core (on the \emph{annulus} and the \emph{corners}, Figure~\ref{fig:composite}(c), yellow and blue), the Gaussian decay makes the contribution exponentially negligible.

The main difficulty on the core is the cubic phase \(\exp(-4itT(\mu))\), which oscillates and does not vanish pointwise. We integrate it directly: because \(T\) is an odd function (\(T(-\mu)=-T(\mu)\)) and \(\corereg\) is centrally symmetric, the imaginary part of \(\int_{\corereg}e^{-2t\|\mu\|^2}e^{-4itT(\mu)}\,d\mu\) vanishes. The loss in the real part is controlled by \(1-\cos(4tT)\le 8t^2T^2\); Gaussian moment estimates on \(T^2\) then give
\[
  \RePart\int_{\corereg}e^{-2t\|\mu\|^2}e^{-4itT(\mu)}\,d\mu
  \;\ge\;
  \bigl[1-O(n^3/t)\bigr]\,\core(d,t),
\]
where \(\core(d,t)=\int_{\corereg}e^{-2t\|\mu\|^2}\,d\mu\). The quartic and quintic terms are controlled in \(L^2\) against the Gaussian weight using hypercontractivity~\cite[Theorem~9.22]{ODo2014} and a vertex-peeling argument; the sixth-order remainder is bounded by Gaussian moment estimates. Altogether, the primary integral equals \(\bigl[1-\binom{n}{3}/(8t)+O(n^2/t+n^{5/2}/t^{3/2}+n^6/t^2+e^{-cn^2})\bigr]\Ahat\) for \(t\ge C_0n^3\). The correction \(\binom{n}{3}/(8t)\) vanishes as \(t/n^3\to\infty\) but is bounded away from zero when \(t\asymp n^3\).

\paragraph{Off-core and residual.}
It remains to show that the off-core integral over \(\Bbox_\delta\setminus\corereg\) and the residual integral over~\(R_\delta\) are both negligible compared to~\(\Ahat\). These regions decompose into pieces with decreasing ease of control (Figure~\ref{fig:composite}(a)--(b)). The key pointwise tool is the following bound from~\cite{DL10}.

\begin{lemma}[Cosine-product bound {\cite[Lemma~3.1]{DL10}}]\label{fact:psi-sq}
For every \(\gamma\in[-\pi,\pi]^d\) and every \(k\in\{1,\dots,n\}\),
\(|\psi(\gamma)|^2\le \frac12+\frac12\prod_{i\ne k}\cos(2\gamma_{\{i,k\}})\).
\end{lemma}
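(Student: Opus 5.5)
Write \(\psi(\gamma)=\E[e^{i\gamma\cdot Z(\xi)}]\) with \(\xi\) uniform on \(\{\pm1\}^n\), and fix the vertex \(k\). The plan is to condition on the \(k\)-th coordinate and use that \(|\cdot|^2\) of an average is at most the average of \(|\cdot|^2\) over the remaining coordinates in a suitable split. By the symmetry \(\xi\mapsto-\xi\) (which fixes \(Z(\xi)\)) we may set \(\xi_k=1\). Then \(Z(\xi)\) separates into the edges at \(k\), contributing \(\sum_{i\ne k}\gamma_{\{i,k\}}\xi_i\), and the edges not containing \(k\), contributing a function \(Q(\xi_{-k})=\sum_{i<j,\,i,j\ne k}\gamma_{\{i,j\}}\xi_i\xi_j\) of \(\eta:=\xi_{-k}\in\{\pm1\}^{n-1}\). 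Writing \(L(\eta)=\sum_{i\ne k}\gamma_{\{i,k\}}\eta_i\), we get
\[
\psi(\gamma)=\E_\eta\bigl[e^{iQ(\eta)}e^{iL(\eta)}\bigr].
\]

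The key observation is that \(Q(-\eta)=Q(\eta)\) while \(L(-\eta)=-L(\eta)\). Pairing \(\eta\) with \(-\eta\) therefore gives \(\psi(\gamma)=\E_\eta[e^{iQ(\eta)}\cos L(\eta)]\). Cauchy--Schwarz with \(|e^{iQ}|=1\) then yields
\[
|\psi(\gamma)|^2\le \E_\eta[\cos^2 L(\eta)]=\tfrac12+\tfrac12\E_\eta[\cos 2L(\eta)].
\]

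It remains to evaluate \(\E_\eta[\cos 2L(\eta)]\). Since the \(\eta_i\) are independent Rademacher variables, \(\E[e^{2iL}]=\prod_{i\ne k}\E[e^{2i\gamma_{\{i,k\}}\eta_i}]=\prod_{i\ne k}\cos(2\gamma_{\{i,k\}})\). This product is real, so \(\E[\cos 2L]\) equals it, which gives the claimed bound.

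There is no genuine obstacle. The only step needing care is the reduction to \(\xi_k=1\): flipping \(\xi\mapsto-\xi\) preserves the law of \(Z(\xi)\), so conditioning on \(\xi_k\) gives the same conditional expectation for either sign. The step that carries the argument is the parity split of the exponent into a part even in \(\eta\) (\(Q\)) and a part odd in \(\eta\) (\(L\)); once that split is made, Cauchy--Schwarz removes \(Q\) entirely.
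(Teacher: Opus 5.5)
Your proof is correct and complete. The paper does not reprove this lemma; it cites \cite[Lemma~3.1]{DL10}, so there is no in-paper proof to compare against. Your argument is the natural one for this bound: split off the edges at vertex \(k\) to write \(\psi(\gamma)=\E_\eta[e^{iQ(\eta)}\cos L(\eta)]\), apply Cauchy--Schwarz to get \(|\psi(\gamma)|^2\le \tfrac12+\tfrac12\E[\cos 2L]\), and factor \(\E[e^{2iL}]\) over the independent signs.

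One small simplification: you do not need the reduction to \(\xi_k=1\) or the pairing \(\eta\leftrightarrow-\eta\). Averaging over \(\xi_k\) with \(\eta\) fixed already gives \(\E[e^{i(Q(\eta)+\xi_k L(\eta))}\mid\eta]=e^{iQ(\eta)}\cos L(\eta)\). Also, your argument never uses \(\gamma\in[-\pi,\pi]^d\), so the bound holds for all \(\gamma\in\R^d\).
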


\emph{Odd cells.} The superlattice \(\Lambda_0=\{\lambda\in\TT^d:\text{every coordinate}\in\{0,\pm\pi/2,\pi\}\}\) tiles the torus into quarter-boxes \(\Bbox_{\pi/4}(\lambda)\). All but a \(2^{1-n}\)-fraction of these cells correspond to lattice points in \(\Lambda_0\setminus\Lambda\), where the cosine-product bound (Lemma~\ref{fact:psi-sq}) gives \(|\psi(\gamma)|^2\le 1/2\). On these cells \(|\psi|^{4t}\le 2^{-2t}\), so their total contribution is \(o(\Ahat)\).

\emph{Near shell.} Each even cell \(\Bbox_{\pi/4}(\lambda)\) contains the primary box \(\Bbox_\delta(\lambda)\). Let \(r\in(0,\pi/4)\) be a small absolute constant (fixed in Section~\ref{sec:residual}) such that the cumulant expansion of \(\log\psi\) is valid on the ball \(\Dd_r=\{\lambda:\|\lambda\|^2\le r^2\}\). On the region \(\Dd_r\setminus\Bbox_\delta\) between the primary box and this ball (Figure~\ref{fig:composite}(b), orange), the norm \(\|\mu\|\) exceeds~\(\delta\), and the small-ball decay of~\(\psi\) from the cumulant expansion gives \(|\psi(\mu)|^{4t}\le \exp(-ct\delta^2)\). With \(\delta^2=2d/t\), this contributes at most \(\exp(-cd)\,\Ahat\).

\emph{Far shell.} On the remaining region \(\Bbox_{\pi/4}\setminus\Dd_r\) (Figure~\ref{fig:composite}(b)--(c)), we need a pointwise bound \(|\psi(\lambda)|<1\) strong enough to overwhelm the volume \((\pi/2)^d\). The cosine-product bound (Lemma~\ref{fact:psi-sq}) provides such a contraction when the \emph{row-sums} \(I_k(\lambda):=\sum_{i\ne k}\lambda_{\{i,k\}}^2\) are large for some~\(k\), but it is too weak when all row-sums are small.

To handle this sub-region, we replace the Rademacher entries \(\xi_i\) with i.i.d.\ standard Gaussians \(g_i\) and write \(\psi_G(\lambda)=\E[\exp(i\sum_{i<j}\lambda_{\{i,j\}}g_ig_j)]\) for the resulting characteristic function. The key property is \(|\psi_G(\lambda)|\le(1+2\|\lambda\|^2)^{-1/4}\), which contracts as soon as \(\|\lambda\|\) is bounded away from zero, regardless of how the norm is distributed across coordinates. A Gaussian comparison inequality (Corollary~\ref{cor:weak-comparison}) bounds the distance:
\[
  |\psi(\lambda)-\psi_G(\lambda)|
  \;\le\;
  \tfrac{3}{2}\sum_{k=1}^n I_k(\lambda)^{3/2}.
\]
When all row-sums are small, this error is small and \(|\psi|\) inherits the contraction of~\(|\psi_G|\). When some row-sum \(I_k\) is large, the cosine-product bound \(|\psi(\lambda)|^2\le \frac12+\frac12\prod_{i\ne k}\cos(2\lambda_{\{i,k\}})\) already gives a strict contraction through the exponential decay \(\cos(2x)\le \exp(-8x^2/\pi^2)\). In both cases, \(|\psi|\le q<1\) for a constant \(q\) independent of~\(n\), so the contribution decays exponentially in~\(t\).

Between these two regimes lies an intermediate region where the comparison error is too large for the Gaussian bound but the row-sums are too small for the cosine-product bound to give an \(n\)-independent rate. Here the contraction \(|\psi|^{4t}\le \exp(-ctn^{-2/3})\) is \(n\)-dependent, but it still overwhelms the volume \((\pi/2)^d\) because \(tn^{-2/3}\gg d\log t\asymp n^2\log t\) whenever \(t\gg n^{8/3}\log t\). This is the tightest constraint from the residual, but it is well below the $t \geq C_0 n^3$ constraint required by the primary analysis. The primary term, not the residual, determines the exponent \(\alpha=3\).

\begin{figure}[p]
\centering
\resizebox{\textwidth}{!}{%
\begin{tikzpicture}
  %
  %
  \begin{scope}[xshift=-8.5cm, x=0.76cm, y=0.76cm, every node/.style={font=\footnotesize}]
    \def\bh{0.25}
    \def\dxl{5.2}
    \def\dyl{1.6}
    \def\tdx{0.5}
    \def\tdy{0.5}
    \foreach \sliceIdx/\zlabel/\occA/\occB in {
      3/{\pi}/1.5/3.5,
      2/{\frac{\pi}{2}}/0.5/2.5,
      1/{0}/1.5/3.5,
      0/{-\frac{\pi}{2}}/0.5/2.5} {
      \pgfmathsetmacro{\xoff}{\sliceIdx * \dxl}
      \pgfmathsetmacro{\yoff}{\sliceIdx * \dyl}
      \pgfmathsetmacro{\bdx}{2*\bh*\tdx}
      \pgfmathsetmacro{\bdy}{2*\bh*\tdy}
      \begin{scope}[shift={(\xoff, \yoff)}]
        \fill[gray!12, fill opacity=0.3] (0,4) -- (4,4)
          -- ({4+\tdx},{4+\tdy}) -- (\tdx,{4+\tdy}) -- cycle;
        \fill[pattern=north east lines, pattern color=gray!18]
          (0,4) -- (4,4) -- ({4+\tdx},{4+\tdy}) -- (\tdx,{4+\tdy}) -- cycle;
        \fill[gray!18, fill opacity=0.3] (4,0) -- (4,4)
          -- ({4+\tdx},{4+\tdy}) -- ({4+\tdx},\tdy) -- cycle;
        \fill[pattern=north east lines, pattern color=gray!18]
          (4,0) -- (4,4) -- ({4+\tdx},{4+\tdy}) -- ({4+\tdx},\tdy) -- cycle;
        \fill[gray!8, fill opacity=0.3] (0,0) rectangle (4,4);
        \fill[pattern=north east lines, pattern color=gray!22]
          (0,0) rectangle (4,4);
        %
        \foreach \u in {\occA, \occB} {
          \foreach \v in {\occA, \occB} {
            \fill[white, fill opacity=0.7] ({\u-0.5},{\v+0.5}) -- ({\u+0.5},{\v+0.5})
              -- ({\u+0.5+\tdx},{\v+0.5+\tdy}) -- ({\u-0.5+\tdx},{\v+0.5+\tdy}) -- cycle;
            \fill[orange!25, fill opacity=0.2] ({\u-0.5},{\v+0.5}) -- ({\u+0.5},{\v+0.5})
              -- ({\u+0.5+\tdx},{\v+0.5+\tdy}) -- ({\u-0.5+\tdx},{\v+0.5+\tdy}) -- cycle;
            \fill[white, fill opacity=0.7] ({\u+0.5},{\v-0.5}) -- ({\u+0.5},{\v+0.5})
              -- ({\u+0.5+\tdx},{\v+0.5+\tdy}) -- ({\u+0.5+\tdx},{\v-0.5+\tdy}) -- cycle;
            \fill[orange!35, fill opacity=0.2] ({\u+0.5},{\v-0.5}) -- ({\u+0.5},{\v+0.5})
              -- ({\u+0.5+\tdx},{\v+0.5+\tdy}) -- ({\u+0.5+\tdx},{\v-0.5+\tdy}) -- cycle;
            \fill[white] ({\u-0.5},{\v-0.5}) rectangle ++(1,1);
            \fill[orange!15, fill opacity=0.3] ({\u-0.5},{\v-0.5}) rectangle ++(1,1);
            \draw[orange!60!black, thin] ({\u-0.5},{\v-0.5}) rectangle ++(1,1);
            \draw[orange!60!black, thin]
              ({\u-0.5},{\v+0.5}) -- ({\u-0.5+\tdx},{\v+0.5+\tdy})
              -- ({\u+0.5+\tdx},{\v+0.5+\tdy}) -- ({\u+0.5},{\v+0.5})
              ({\u+0.5+\tdx},{\v+0.5+\tdy}) -- ({\u+0.5+\tdx},{\v-0.5+\tdy}) -- ({\u+0.5},{\v-0.5});
          }
        }
        %
        \draw[thick] (0,0) rectangle (4,4);
        \draw[thick] (0,4) -- (\tdx,{4+\tdy}) -- ({4+\tdx},{4+\tdy}) -- (4,4);
        \draw[thick] (4,0) -- ({4+\tdx},\tdy) -- ({4+\tdx},{4+\tdy});
        %
        \foreach \u in {\occA, \occB} {
          \foreach \v in {\occA, \occB} {
            \fill[blue!60!black!35] ({\u-\bh},{\v+\bh}) -- ({\u+\bh},{\v+\bh})
              -- ({\u+\bh+\bdx},{\v+\bh+\bdy}) -- ({\u-\bh+\bdx},{\v+\bh+\bdy}) -- cycle;
            \fill[blue!60!black!60] ({\u+\bh},{\v-\bh}) -- ({\u+\bh},{\v+\bh})
              -- ({\u+\bh+\bdx},{\v+\bh+\bdy}) -- ({\u+\bh+\bdx},{\v-\bh+\bdy}) -- cycle;
            \fill[blue!60!black!45] ({\u-\bh},{\v-\bh}) rectangle ++(2*\bh, 2*\bh);
            \draw[blue!60!black!85, thin]
              ({\u-\bh},{\v-\bh}) rectangle ++(2*\bh, 2*\bh)
              ({\u-\bh},{\v+\bh}) -- ({\u-\bh+\bdx},{\v+\bh+\bdy})
              -- ({\u+\bh+\bdx},{\v+\bh+\bdy}) -- ({\u+\bh},{\v+\bh})
              ({\u+\bh+\bdx},{\v+\bh+\bdy}) -- ({\u+\bh+\bdx},{\v-\bh+\bdy}) -- ({\u+\bh},{\v-\bh});
            \fill[blue!75!black] (\u,\v) circle (1.3pt);
          }
        }
      \end{scope}
    }
    %
    \pgfmathsetmacro{\dxT}{3*\dxl}
    \pgfmathsetmacro{\dyT}{3*\dyl}
    \draw[gray!35, thin, densely dashed] (0,0) -- (\dxT,\dyT);
    \draw[gray!35, thin, densely dashed] (4,0) -- ({4+\dxT},\dyT);
    \draw[gray!35, thin, densely dashed] (4,4) -- ({4+\dxT},{4+\dyT});
    \draw[gray!35, thin, densely dashed] (0,4) -- (\dxT,{4+\dyT});
    %
    \pgfmathsetmacro{\titleX}{2 + 1.5*\dxl}
    \pgfmathsetmacro{\titleY}{3*\dyl + 4 + 1.0}
    \node[above, font=\normalsize\bfseries] at (\titleX, \titleY)
      {$\TT^3$\quad ($n=3$, $d=3$)};
    %
    \pgfmathsetmacro{\legX}{2 + 1.5*\dxl}
    \node[font=\footnotesize] at (\legX, -1.5) {%
      {\color{blue!75!black}$\bullet$}\;$\Lambda$
      \quad
      {\color{blue!60!black!45}\rule{4pt}{4pt}}\;$\Bbox_\delta(\lambda)$
      \quad
      {\color{orange!25}\rule{4pt}{4pt}}\;$\Bbox_{\pi/4}(\lambda)$
      \quad
      \tikz[baseline=-0.6ex]{\fill[gray!20] (0,0) rectangle (5pt,5pt);
        \fill[pattern=north east lines, pattern color=gray!55] (0,0) rectangle (5pt,5pt);
        \draw[gray!50] (0,0) rectangle (5pt,5pt);}\;$R_\delta^{\mathrm{odd}}$
    };
    %
    \node[font=\footnotesize] at (\legX, -2.3)
      {$R_\delta = \TT^d\!\setminus\!\bigcup_{\lambda\in\Lambda}\Bbox_\delta(\lambda)
        = {\color{orange!70!black}R_\delta^{\mathrm{even}}}
        \cup {\color{gray!50!black}R_\delta^{\mathrm{odd}}}$};
    %
    \node[font=\footnotesize] at (\legX, -3.1) {(a) Torus decomposition};
  \end{scope}
  %
  %
  \begin{scope}[xshift=-4.3cm, yshift=-8.0cm, scale=0.80, every node/.style={font=\footnotesize}]
    \def\bigbox{3.5}
    \def\rr{2.1}
    \def\smbox{1.6}
    \def\dx{1.4}
    \def\dy{1.4}
    \pgfmathsetmacro{\dvx}{\dx/(2*\bigbox)}
    \pgfmathsetmacro{\dvy}{\dy/(2*\bigbox)}
    \pgfmathsetmacro{\pdx}{\dx*\smbox/\bigbox}
    \pgfmathsetmacro{\pdy}{\dy*\smbox/\bigbox}
    %
    \begin{scope}[
      shift={({-\bigbox},{\bigbox})},
      x={(1,0)},
      y={(\dvx,\dvy)}
    ]
      \clip (0,0) -- ({2*\bigbox},0) -- ({2*\bigbox},{2*\bigbox}) -- (0,{2*\bigbox}) -- cycle;
      \fill[orange!18] (0,0) -- ({2*\bigbox},0) -- ({2*\bigbox},{2*\bigbox}) -- (0,{2*\bigbox}) -- cycle;
    \end{scope}
    \draw[thick, orange!60!black] (-\bigbox,\bigbox) -- ({-\bigbox+\dx},{\bigbox+\dy})
      -- ({\bigbox+\dx},{\bigbox+\dy}) -- (\bigbox,\bigbox);
    %
    \begin{scope}[
      shift={({\bigbox},{-\bigbox})},
      x={(\dvx,\dvy)},
      y={(0,1)}
    ]
      \clip (0,0) -- ({2*\bigbox},0) -- ({2*\bigbox},{2*\bigbox}) -- (0,{2*\bigbox}) -- cycle;
      \fill[orange!22] (0,0) -- ({2*\bigbox},0) -- ({2*\bigbox},{2*\bigbox}) -- (0,{2*\bigbox}) -- cycle;
    \end{scope}
    \draw[thick, orange!60!black] (\bigbox,-\bigbox) -- ({\bigbox+\dx},{-\bigbox+\dy})
      -- ({\bigbox+\dx},{\bigbox+\dy});
    %
    \pgfmathsetmacro{\scx}{0.3*\dvx*\bigbox}
    \pgfmathsetmacro{\scy}{0.3*\dvy*\bigbox}
    %
    \begin{scope}
      \clip (-\bigbox,-\bigbox) rectangle (\bigbox,\bigbox);
      \fill[orange!15] (-\bigbox,-\bigbox) rectangle (\bigbox,\bigbox);
      \fill[yellow!25] (\scx,\scy) circle (\rr);
      \fill[white] (-\smbox,-\smbox) rectangle (\smbox,\smbox);
    \end{scope}
    \draw[very thick, orange!60!black] (-\bigbox,-\bigbox) rectangle (\bigbox,\bigbox);
    \begin{scope}
      \clip (-\bigbox,-\bigbox) rectangle (\bigbox,\bigbox);
      \draw[thick, densely dashed, red!60!black] (\scx,\scy) circle (\rr);
      \begin{scope}
        \clip (\scx,\scy) circle (\rr);
        \draw[red!40!black, thin, opacity=0.4] (\scx,\scy) ellipse ({\rr} and {0.35*\rr});
        \draw[red!40!black, thin, opacity=0.4] (\scx,\scy) ellipse ({0.35*\rr} and {\rr});
      \end{scope}
    \end{scope}
    %
    \draw[gray!50, semithick, densely dashed]
      (-\smbox,\smbox) -- ({-\smbox+\pdx},{\smbox+\pdy})
      -- ({\smbox+\pdx},{\smbox+\pdy}) -- (\smbox,\smbox);
    \draw[gray!50, semithick, densely dashed]
      ({\smbox+\pdx},{\smbox+\pdy}) -- ({\smbox+\pdx},{-\smbox+\pdy})
      -- (\smbox,-\smbox);
    \draw[gray!50, semithick, densely dashed] (-\smbox,-\smbox) rectangle (\smbox,\smbox);
    %
    %
    \node[left, yellow!50!black, font=\scriptsize, align=right] at ({-\bigbox-0.6}, 0.8)
      {Near shell\\[-1pt]{\scriptsize $\Dd_r\!\setminus\!\Bbox_\delta$}};
    \draw[-{Stealth[length=3pt]}, yellow!50!black, thick]
      ({-\bigbox-0.55}, 0.8) -- ({-\smbox-0.15}, 0.8);
    \node[left, orange!60!black, font=\scriptsize, align=right] at ({-\bigbox-0.6}, -2.2)
      {Far shell\\[-1pt]{\scriptsize $\Bbox_{\pi/4}\!\setminus\!\Dd_r$}};
    \draw[-{Stealth[length=3pt]}, orange!60!black, thick]
      ({-\bigbox-0.55}, -2.2) -- ({-\bigbox+0.05}, -2.2);
    %
    \node[above, font=\small] at ({\dx/2}, {\bigbox+\dy+0.5})
      {$\Bbox_{\pi/4}(\lambda)$};
    %
    \node[font=\footnotesize] at (0, {-\bigbox-1.2}) {(b) Even cell};
    %
    \coordinate (bd-tr) at (\smbox, \smbox);
    \coordinate (bd-br) at (\smbox, -\smbox);
  \end{scope}
  %
  %
  \begin{scope}[xshift=4.3cm, yshift=-8.0cm, scale=0.80, every node/.style={font=\footnotesize}]
    \def\bs{3.0}
    \def\rr{4.0}
    \def\rt{1.2}
    \def\dx{1.6}
    \def\dy{1.6}
    \pgfmathsetmacro{\dvx}{\dx/(2*\bs)}
    \pgfmathsetmacro{\dvy}{\dy/(2*\bs)}
    \pgfmathsetmacro{\rint}{sqrt(\rr*\rr - \bs*\bs)}
    %
    \begin{scope}[
      shift={({-\bs},{\bs})},
      x={(1,0)},
      y={(\dvx,\dvy)}
    ]
      \clip (0,0) -- ({2*\bs},0) -- ({2*\bs},{2*\bs}) -- (0,{2*\bs}) -- cycle;
      \fill[blue!60!black!35] (0,0) rectangle ({2*\bs},{2*\bs});
    \end{scope}
    \draw[thick] (-\bs,\bs) -- ({-\bs+\dx},{\bs+\dy})
      -- ({\bs+\dx},{\bs+\dy}) -- (\bs,\bs);
    %
    \begin{scope}[
      shift={({\bs},{-\bs})},
      x={(\dvx,\dvy)},
      y={(0,1)}
    ]
      \clip (0,0) -- ({2*\bs},0) -- ({2*\bs},{2*\bs}) -- (0,{2*\bs}) -- cycle;
      \fill[blue!60!black!50] (0,0) rectangle ({2*\bs},{2*\bs});
    \end{scope}
    \draw[thick] (\bs,-\bs) -- ({\bs+\dx},{-\bs+\dy})
      -- ({\bs+\dx},{\bs+\dy});
    %
    \pgfmathsetmacro{\cx}{\dvx*\bs}
    \pgfmathsetmacro{\cy}{\dvy*\bs}
    %
    \begin{scope}
      \clip (-\bs,-\bs) rectangle (\bs,\bs);
      \fill[blue!60!black!45] (-\bs,-\bs) rectangle (\bs,\bs);
      \fill[yellow!40] (\cx,\cy) circle (\rr);
      \fill[red!35] (\cx,\cy) circle (\rt);
    \end{scope}
    \draw[very thick] (-\bs,-\bs) rectangle (\bs,\bs);
    \draw[thick, yellow!50!black, densely dashed] (\cx,\cy) circle (\rr);
    \begin{scope}
      \clip (\cx,\cy) circle (\rr);
      \draw[yellow!40!black, thin, opacity=0.4] (\cx,\cy) ellipse ({\rr} and {0.35*\rr});
      \draw[yellow!40!black, thin, opacity=0.4] (\cx,\cy) ellipse ({0.35*\rr} and {\rr});
    \end{scope}
    \draw[thick, red!60!black] (\cx,\cy) circle (\rt);
    \begin{scope}
      \clip (\cx,\cy) circle (\rt);
      \draw[red!40!black, thin, opacity=0.4] (\cx,\cy) ellipse ({\rt} and {0.35*\rt});
      \draw[red!40!black, thin, opacity=0.4] (\cx,\cy) ellipse ({0.35*\rt} and {\rt});
    \end{scope}
    \fill[red!80!black] (\cx,\cy) circle (2pt);
    \node[below, font=\scriptsize] at (\cx,{\cy-0.15}) {$\lambda$};
    \node[red!60!black, font=\scriptsize\bfseries] at (\cx,{\cy+0.55}) {Core $\Dd_{\mathrm{core}}$};
    %
    \node[font=\scriptsize, yellow!50!black] at (\cx, {-\bs+1.0}) {Annulus};
    \node[font=\scriptsize, yellow!50!black] at (\cx, {-\bs+0.6})
      {$(\Bbox_\delta\!\cap\!\Dd_r)\!\setminus\!\Dd_{\mathrm{core}}$};
    \node[left, blue!60!black, font=\scriptsize, align=right] at ({-\bs-0.6}, -2.2)
      {Corners\\[-1pt]{\scriptsize $\Bbox_\delta\!\setminus\!\Dd_r$}};
    \draw[-{Stealth[length=3pt]}, blue!60!black, thick]
      ({-\bs-0.55}, -2.2) -- ({-\bs+0.05}, -2.2);
    %
    \node[above, font=\small] at ({\dx/2}, {3.5+1.4+0.5})
      {$\Bbox_\delta(\lambda)$};
    %
    \node[font=\footnotesize] at (0, {-3.5-1.2}) {(c) Primary box};
    %
    \coordinate (pr-tl) at (-\bs, \bs);
    \coordinate (pr-bl) at (-\bs, -\bs);
  \end{scope}
  %
  %
  \draw[-{Stealth[length=5pt]}, thick, blue!60!black]
    (bd-tr) -- (pr-tl);
  \draw[-{Stealth[length=5pt]}, thick, blue!60!black]
    (bd-br) -- (pr-bl);
\end{tikzpicture}}%
\caption{Composite overview.
(a)~The torus $\TT^3$ decomposes into primary boxes $\Bbox_\delta$
(blue), even residual cells (orange), and odd cells (hatched).
(b)~A single even cell $\Bbox_{\pi/4}$; the blue $\Bbox_\delta$
is expanded in~(c).
(c)~Inside $\Bbox_\delta$: the core $\Dd_{\mathrm{core}}$ (red),
annulus (yellow), and corners (blue).}
\label{fig:composite}
\end{figure}
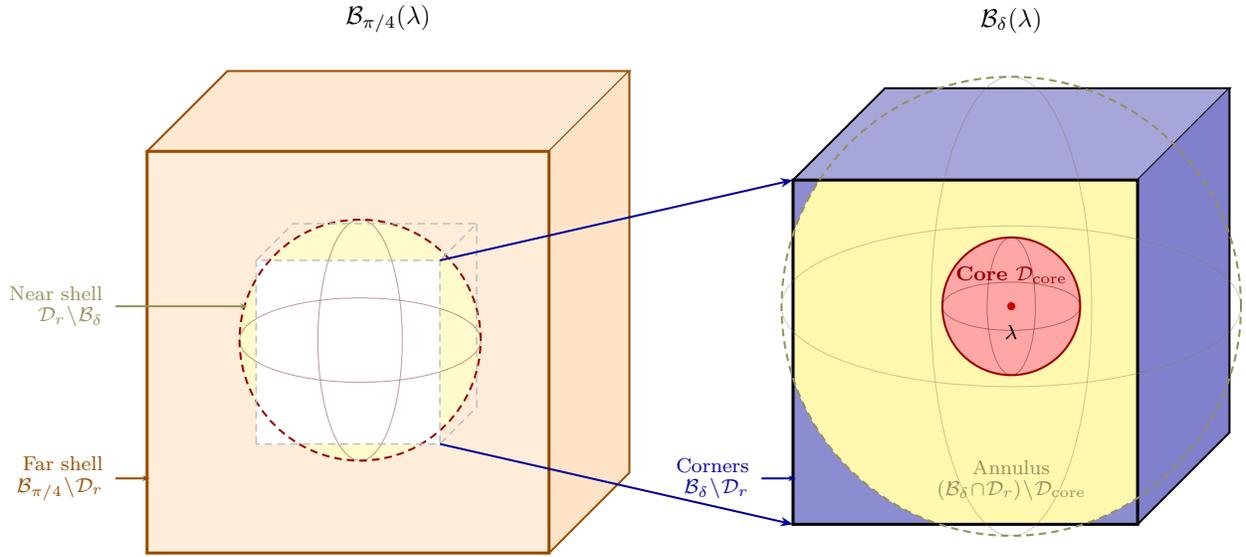

\subsection{Comparison with prior work}\label{sec:comparison}

All three proofs (\cite{DL10}, \cite{Can}, and the present work) use the primary/residual decomposition~\eqref{eq:primary-secondary}. The exponent~\(\alpha\) is determined by how two difficulties are resolved: the cubic phase on the primary boxes, and the pointwise decay on the far shell. We describe each approach in the notation introduced above.

\paragraph{The cubic phase and the primary integral.}
On the primary box \(\Bbox_\delta\), the characteristic function \(\psi\) is close to Gaussian but has a non-trivial cubic phase driven by the triangle form~\(T\). The three proofs handle this phase differently.

De~Launey and Levin~\cite{DL10} separate \(\psi\) into real and imaginary parts. On \(\Bbox_\delta\) the ratio \(\beta:=\operatorname{Im}(\psi)/\operatorname{Re}(\psi)\) is at most \(C(n\delta)^3\), and they bound \(\operatorname{Re}(\psi^{4t})\) below by \(\operatorname{Re}(\psi)^{4t}\). This lower bound requires \(4t\beta<1\): if \(\beta\) is too large relative to \(1/t\), raising \(\psi\) to the \(4t\)-th power amplifies the imaginary part and destroys the bound. Since \(\beta\le C(n\delta)^3\), the constraint forces \(\delta\le C(tn^3)^{-1/3}\), shrinking the primary box as \(t\) and \(n\) grow. However, shrinking~\(\delta\) enlarges the residual region, which is harder to control (see below).

Canfield~\cite{Can} avoids this hard constraint by integrating one coordinate at a time. The cubic phase can be written \(T(\mu)=\sum_{j<k}\mu_{jk}B_{jk}\), where \(B_{jk}:=\sum_\ell\mu_{j\ell}\mu_{k\ell}\) sums over the remaining edges of triangles through~\(\{j,k\}\). Because \(\mu_{jk}\) couples linearly to~\(B_{jk}\), completing the square \(\exp(-t\mu_{jk}^2/2-it\mu_{jk}B_{jk})=\exp(-tB_{jk}^2/2)\cdot\exp(-t(\mu_{jk}+iB_{jk})^2/2)\) reduces each one-dimensional integral to a Gaussian at the cost of a factor \(\exp(-tB_{jk}^2/2)\). After all \(d\) steps, the accumulated error is \(d\cdot O(n^2/t)=O(n^4/t)\), giving \(\alpha=4\). Canfield identifies \(\alpha=3\) as the natural target but leaves the required cubic integration as an open problem (see~\cite{Can}).

Our approach integrates the cubic phase in one step rather than absorbing it variable by variable. As described in Section~\ref{sec:proof-overview}, confining the analysis to the core~\(\corereg\) and exploiting the antisymmetry of~\(T\) gives an error of \(O(n^3/t)\), one power of~\(n\) better than Canfield's accumulated error. Because the primary lower bound lives entirely on~\(\corereg\), the remainder of~\(\Bbox_\delta\) is bounded as part of the off-core estimates~\eqref{eq:three-way}, and the choice of~\(\delta\) is decoupled from the primary analysis.

\paragraph{The residual and the far shell.}
De~Launey and Levin bound the entire residual~\(R_\delta\) by the uniform estimate \(|\psi(\gamma)|\le\cos\delta\le \exp(-(11/24)\delta^2)\), yielding \(|(2\pi)^{-d}\!\int_{R_\delta}\!\psi^{4t}|\le \exp(-(11/6)t\delta^2)\). For this to be \(o(\Ahat)\) one needs \(t\delta^2\gg d\log t\). Combined with the constraint \(\delta\le C(tn^3)^{-1/3}\) from their primary analysis, this gives \(t^{1/3}/n^2\gg n^2\log t\), hence \(t\gg n^{12}\). Canfield improves the residual bound by a factor of~\(n\) in the exponent, but this is not his bottleneck.

In our proof, the odd cells and the near shell \(\Dd_r\setminus\Bbox_\delta\) are negligible by elementary bounds (Section~\ref{sec:proof-overview}). The critical piece is the far shell \(\Bbox_{\pi/4}\setminus\Dd_r\). When all row-sums \(I_k\) are small, the cosine-product bound (Lemma~\ref{fact:psi-sq}) decays at an \(n\)-dependent rate that is too weak; the Gaussian comparison inequality (Corollary~\ref{cor:weak-comparison}) provides the \(n\)-independent contraction needed in this sub-region.

\medskip

The progression from \(\alpha=12\) to \(\alpha=3\) is driven by a decoupling of the primary and residual estimates. De~Launey and Levin's two-way split forces \(\delta\le C(tn^3)^{-1/3}\) on the primary side while the residual needs \(t\delta^2\) large. Canfield's coordinate-by-coordinate integration frees the residual from~\(\delta\) but accumulates \(O(n^4/t)\) error. The three-way decomposition removes this coupling: the Gaussian comparison resolves the far-shell bottleneck, and all thresholds align at \(t\asymp n^3\). Pushing below \(\alpha=3\) would require finer control of the cubic phase on the core, since the residual is already negligible for \(t\gg n^{8/3}\log t\).

\subsection*{Acknowledgement of AI tools}

The author used AI tools throughout this project for literature search, editing, and figure generation. More substantially, the author built a custom harness around GPT 5.4 Pro that identified bottlenecks in the existing proof approaches and, after considerable iteration, helped guide the analysis to \(\alpha=3\). Details of this process and harness will be discussed elsewhere.

\subsection{Notation and preliminaries}\label{sec:prelim}

We fix notation and collect the preliminary results used in Sections~\ref{sec:core}--\ref{sec:counting}. We include short proofs of some of the results from~\cite{DL10} for completeness.

\paragraph{Notation.}
We retain \(n\), \(d=\binom{n}{2}\), \(Z(y)\), \(\xi\), \(\psi\), \(N_{n,s}\), \(\Lambda\), \(\Bbox_\delta\), \(R_\delta\), \(\Ahat\), and \(A_{n,4t}\) as defined in Section~\ref{sec:proof-overview}. For i.i.d.\ copies \(\xi^{(1)},\dots,\xi^{(s)}\) of~\(\xi\), write \(S_s=\sum_{r=1}^s Z(\xi^{(r)})\).

For the cumulant expansion we write \(X_\lambda=\lambda\cdot Z(\xi)\), \(s(\lambda)=\|\lambda\|^2\), \(\kappa_r(\lambda)\) for the \(r\)-th cumulant of \(X_\lambda\), \(T(\lambda)=\kappa_3(\lambda)/6\), \(Q(\lambda)=\kappa_4(\lambda)/24\), and \(P(\lambda)=\kappa_5(\lambda)/120\). For \(r>0\), set \(\Dd_r=\{\lambda\in\R^d:\|\lambda\|^2\le r^2\}\). The core region, the core Gaussian mass, and the full Gaussian integral are
\[
  \corereg=\left\{\lambda\in\R^d:\|\lambda\|^2\le \frac{d}{t}\right\},
  \qquad
  \core(d,t)=\int_{\corereg}e^{-2t\|\lambda\|^2}\,d\lambda,
  \qquad
  F(d,t)=\left(\frac{\pi}{2t}\right)^{d/2}.
\]

\paragraph{Results from~\cite{DL10}.}
The lattice \(\Lambda\) has a rigid algebraic structure, which we use to decompose the integral.

\begin{lemma}[Lattice structure]\label{fact:lambda-facts}
The following hold.
\begin{enumerate}[label=(\roman*),nosep]
\item If \(\lambda\in\Lambda\) and \(\gamma\in\Bbox_{\pi/4}\), then
\(\psi(\lambda+\gamma)=\psi(\lambda)\psi(\gamma)\).
\item Every coordinate of every \(\lambda\in\Lambda\) lies in \(\{0,\pm\pi/2,\pi\}\).
\item If \(\delta<\pi/4\), then the boxes \(\Bbox_\delta(\lambda)\), \(\lambda\in\Lambda\), are pairwise disjoint.
\item The multiset \(\{\psi(\lambda):\lambda\in\Lambda\}\) consists of \(\pm1,\pm i\),
each with multiplicity \(2^{2d-n-1}\). Consequently \(\psi(\lambda)^{4t}=1\) for
\(\lambda\in\Lambda\), and \(|\Lambda|=2^{2d-n+1}\).
\item Each \(\lambda\in\Lambda_0\) decomposes uniquely as \(\lambda^{(1)}+\lambda^{(2)}\) with \(\lambda^{(1)}\in\{0,\pi\}^d\) and \(\lambda^{(2)}\in\{0,\pi/2\}^d\). Associate to \(\lambda^{(2)}\) the graph \(G_{\lambda^{(2)}}\) on \([n]\) with edge \(\{i,j\}\) whenever \(\lambda^{(2)}_{\{i,j\}}=\pi/2\). Then \(\lambda\in\Lambda\) if and only if every vertex of \(G_{\lambda^{(2)}}\) has even degree.
\end{enumerate}
The first two parts are \cite[Lemma~2.2]{DL10}, part~(iii) follows immediately from part~(ii), part~(iv) is \cite[Lemma~2.3]{DL10}, and part~(v) is \cite[Lemma~2.4]{DL10}.
\end{lemma}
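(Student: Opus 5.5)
The statement to prove is Lemma~\ref{fact:lambda-facts}. The plan is to work directly from the definition \(\psi(\lambda)=\E[e^{i\lambda\cdot Z(\xi)}]\), where \(\lambda\cdot Z(\xi)=\sum_{i<j}\lambda_{\{i,j\}}\xi_i\xi_j\). Since this is an average of \(2^n\) unit complex numbers, \(|\psi(\lambda)|=1\) holds if and only if \(e^{i\lambda\cdot Z(y)}\) is the same for every \(y\in\{\pm1\}^n\), namely equal to \(e^{i\lambda\cdot Z(\mathbf 1)}\).

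\textbf{Part (ii).} Fix a pair \(\{a,b\}\). Compare \(y\) with the vector obtained by flipping \(y_a\), and likewise for \(y_b\) and for both. Taking the four-term alternating combination of exponents cancels every monomial except \(\lambda_{\{a,b\}}y_ay_b\), up to a factor. The result is \(e^{4i\lambda_{\{a,b\}}}=1\), so \(\lambda_{\{a,b\}}\in(\pi/2)\mathbb Z\).

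\textbf{Part (v).} By (ii), \(\Lambda\subseteq\Lambda_0\). Write \(\lambda=\lambda^{(1)}+\lambda^{(2)}\). The \(\lambda^{(1)}\) part contributes \(\pm1\) factors \(e^{i\pi y_iy_j}=-1\), which do not depend on \(y\). Hence constancy reduces to constancy of \(\prod_{\{i,j\}\in G}i^{y_iy_j}\) over \(y\).

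Flipping a single \(y_k\) changes this product by \(\prod_{j\sim k}(i^{-2y_ky_j})=(-1)^{\deg k}\). So the product is constant exactly when every degree is even. One direction uses single flips. For the converse, note that any \(y\) is reached from \(\mathbf 1\) by a sequence of single flips, each of which multiplies by \((-1)^{\deg k}=1\).

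\textbf{Part (i).} Let \(\lambda\in\Lambda\). Then \(e^{i\lambda\cdot Z(y)}=c\) is constant in \(y\), so \(e^{i(\lambda+\gamma)\cdot Z(y)}=c\,e^{i\gamma\cdot Z(y)}\). Averaging over \(y\) gives \(\psi(\lambda+\gamma)=\psi(\lambda)\psi(\gamma)\). This in fact holds for every \(\gamma\), not only \(\gamma\in\Bbox_{\pi/4}\).

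\textbf{Part (iii).} Distinct points of \(\Lambda\) differ by at least \(\pi/2\) in some coordinate, measured on the torus. Boxes of half-width \(\delta<\pi/4\) around them are therefore disjoint.

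\textbf{Part (iv).} Count \(|\Lambda|\) first. The component \(\lambda^{(1)}\) ranges freely over \(2^d\) choices. The component \(\lambda^{(2)}\) ranges over the even-degree subgraphs of \(K_n\), which form the cycle space of dimension \(d-n+1\). This gives \(2^{2d-n+1}\) points.

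Next compute \(\psi(\lambda)\) by evaluating at \(y=\mathbf 1\): \(\psi(\lambda)=(-1)^{|\lambda^{(1)}|}\,i^{|E(G)|}\), where \(|\lambda^{(1)}|\) is the number of coordinates equal to \(\pi\). For \(n\ge2\) we have \(d\ge1\), so the sign \((-1)^{|\lambda^{(1)}|}\) is equidistributed over \(\pm1\) independently of \(G\). The main obstacle is the factor \(i^{|E(G)|}\): we need even subgraphs with \(|E|\equiv0\) and \(|E|\equiv2\pmod4\) to occur equally often, so that combined with the sign the four values \(\pm1,\pm i\) are equidistributed.

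Here there is a problem. Every even graph has an even number of edges? Not necessarily: a triangle has 3 edges. So \(|E|\) can be odd, and the parity classes must be handled directly. The plan is to show that \(|E|\bmod 2\) is a nontrivial linear functional on the cycle space when \(n\ge3\); a triangle witnesses nontriviality. Combined with the independent uniform sign, this gives multiplicity \(2^{2d-n+1}/4=2^{2d-n-1}\) for each of \(\pm1,\pm i\).

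For \(n=2\) the cycle space is trivial, so the claim would fail. We therefore assume \(n\ge3\), as the paper does elsewhere. Finally, \(\psi(\lambda)^4=1\) for every \(\lambda\in\Lambda\), so \(\psi(\lambda)^{4t}=1\).
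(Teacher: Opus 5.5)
Your proof is correct. The paper gives no argument of its own for this lemma: it cites \cite[Lemmas~2.2--2.4]{DL10} for each part. So your write-up is a self-contained replacement for those citations rather than a variant of a proof in the text.

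Everything rests on your opening observation that $|\psi(\lambda)|=1$ if and only if $y\mapsto e^{i\lambda\cdot Z(y)}$ is constant on $\{\pm1\}^n$. From it:
\begin{itemize}
\item the double flip in $y_a,y_b$ isolates $e^{4i\lambda_{\{a,b\}}y_ay_b}=1$, which gives (ii);
\item a single flip of $y_k$ multiplies the $\lambda^{(2)}$-product by $(-1)^{\deg k}$, which gives (v);
\item (i) follows for every $\gamma\in\R^d$, not only for $\gamma\in\Bbox_{\pi/4}$.
\end{itemize}

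In (iv) your final argument is the right one. The map $G\mapsto|E(G)|\bmod 2$ is linear on the cycle space, since $|A\triangle B|\equiv|A|+|B|\pmod 2$, and a triangle shows it is nontrivial. Hence exactly $2^{d-n}$ even subgraphs have an odd number of edges. Combined with the uniformly distributed sign $(-1)^{|\lambda^{(1)}|}$, this gives $2^{d-n}\cdot 2^{d-1}=2^{2d-n-1}$ points for each of $\pm1,\pm i$.

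The earlier sentence requiring $|E|\equiv0$ and $|E|\equiv2\pmod 4$ to be balanced is a false start. That balance is not needed, because for each fixed $G$ the sign from $\lambda^{(1)}$ already splits the values evenly. It is also not what you go on to prove, so drop it from a write-up.

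Your $n=2$ caveat is correct. In that case $\Lambda=\{0,\pi\}$, the $\psi$-values are $\pm1$, and the asserted multiplicity $2^{2d-n-1}=1/2$ is meaningless. So the multiset claim in (iv) tacitly needs $n\ge 3$. The two consequences the paper actually uses are $\psi(\lambda)^{4t}=1$ and $|\Lambda|=2^{2d-n+1}$. Both follow for all $n\ge2$ from your formula $\psi(\lambda)=(-1)^{|\lambda^{(1)}|}\,i^{|E(G)|}$ and the cycle-space count, so nothing downstream is affected.

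Two small points for a write-up. State the one-line reason the decomposition $\lambda=\lambda^{(1)}+\lambda^{(2)}$ is unique: each of $0,\pi/2,\pi,3\pi/2$ is uniquely $a\pi+b\pi/2$ with $a,b\in\{0,1\}$. Also rename the vertex index in $\prod_{\{i,j\}\in G} i^{y_iy_j}$ so it does not clash with the imaginary unit.
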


The multiplicative identity~(i) and the fact that every \(\psi(\lambda)^4=1\) for \(\lambda\in\Lambda\)~(iv) justify the decomposition~\eqref{eq:primary-secondary} from the proof overview.

\begin{proposition}[Primary-secondary decomposition {\cite[Proposition~2.1]{DL10}}]\label{prop:primary-secondary}
For every \(t\ge 1\) and every \(0<\delta<\pi/4\),
\[
  \Prb(S_{4t}=0)
  =
  2^{2d-n+1}(2\pi)^{-d}\int_{\Bbox_\delta}\psi(\lambda)^{4t}\,d\lambda
  +(2\pi)^{-d}\int_{R_\delta}\psi(\gamma)^{4t}\,d\gamma.
\]
\end{proposition}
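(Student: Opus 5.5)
We start from the Fourier inversion formula~\eqref{eq:fourier-inversion} with \(4t\) in place of \(t\), namely \(\Prb(S_{4t}=0)=(2\pi)^{-d}\int_{\TT^d}\psi(\gamma)^{4t}\,d\gamma\). The first step is to split the torus into the disjoint primary boxes \(\Bbox_\delta(\lambda)\), \(\lambda\in\Lambda\), and the residual \(R_\delta\). Lemma~\ref{fact:lambda-facts}(iii) guarantees that for \(\delta<\pi/4\) these boxes are pairwise disjoint, and by definition \(R_\delta\) is the complement of their union. We identify \(\TT^d\) with \([-\pi,\pi)^d\), so that each box \(\lambda+[-\delta,\delta]^d\) is understood modulo \(2\pi\). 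Additivity of the integral then gives
\[
(2\pi)^{-d}\int_{\TT^d}\psi^{4t}=(2\pi)^{-d}\sum_{\lambda\in\Lambda}\int_{\Bbox_\delta(\lambda)}\psi(\gamma)^{4t}\,d\gamma+(2\pi)^{-d}\int_{R_\delta}\psi(\gamma)^{4t}\,d\gamma .
\]
Since \(\psi\) is \(2\pi\)-periodic in each coordinate (because \(\lambda\cdot Z(\xi)\) has integer coefficients), working modulo \(2\pi\) is harmless.

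Next we show that every primary box contributes the same amount. Fix \(\lambda\in\Lambda\) and substitute \(\gamma=\lambda+\mu\) with \(\mu\in\Bbox_\delta=[-\delta,\delta]^d\). This translation preserves Lebesgue measure on the torus. Because \(\delta<\pi/4\), we have \(\mu\in\Bbox_{\pi/4}\), so Lemma~\ref{fact:lambda-facts}(i) gives \(\psi(\lambda+\mu)=\psi(\lambda)\psi(\mu)\). Hence \(\psi(\lambda+\mu)^{4t}=\psi(\lambda)^{4t}\psi(\mu)^{4t}=\psi(\mu)^{4t}\), the last equality by Lemma~\ref{fact:lambda-facts}(iv). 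Therefore
\[
\int_{\Bbox_\delta(\lambda)}\psi^{4t}=\int_{\Bbox_\delta}\psi(\mu)^{4t}\,d\mu
\]
for every \(\lambda\in\Lambda\).

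Summing this identity over the \(|\Lambda|=2^{2d-n+1}\) lattice points, again by Lemma~\ref{fact:lambda-facts}(iv), yields the primary term \(2^{2d-n+1}(2\pi)^{-d}\int_{\Bbox_\delta}\psi^{4t}\). Together with the residual term this is exactly the claimed formula.

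The only point needing care is the bookkeeping on the torus: the boxes must be disjoint and must not wrap onto themselves. Lemma~\ref{fact:lambda-facts}(iii) supplies disjointness. A box cannot wrap onto itself because its side length is \(2\delta<\pi/2<2\pi\). Beyond this, the multiplicative identity does all the work.
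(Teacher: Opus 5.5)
Your proof is correct and follows the same route the paper indicates: split the torus into the disjoint boxes \(\Bbox_\delta(\lambda)\), \(\lambda\in\Lambda\), plus the residual \(R_\delta\). You then translate each box to the origin using the multiplicative identity of Lemma~\ref{fact:lambda-facts}(i) together with \(\psi(\lambda)^{4t}=1\) from Lemma~\ref{fact:lambda-facts}(iv), and sum over the \(|\Lambda|=2^{2d-n+1}\) lattice points.
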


The residual \(R_\delta\) further splits into two types of cells, distinguished by the parity of the underlying lattice point. The odd cells are negligible by an elementary sign argument; the even cells require more delicate bounds.

\begin{lemma}[Residual decomposition {\cite[Lemma~2.7]{DL10}}]\label{lem:residual-decomp}
For \(0<\delta<\pi/4\),
\[
  R_\delta \;=\; R_\delta^{\mathrm{even}} \cup R_\delta^{\mathrm{odd}},
  \qquad
    R_\delta^{\mathrm{even}}
  = \bigcup_{\lambda\in\Lambda}\bigl(\lambda+[\Bbox_{\pi/4}\setminus\Bbox_\delta]\bigr),
  \qquad
    R_\delta^{\mathrm{odd}}
  = R_\delta\setminus R_\delta^{\mathrm{even}},
\]
and the sets in the union are disjoint.
\end{lemma}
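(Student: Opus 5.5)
The plan is to reduce the statement to a pointwise description of \(\TT^d\) modulo the superlattice \(\Lambda_0\). I would identify \(\TT^d\) with \([-\pi,\pi)^d\) and tile it by the quarter-cells \(\lambda+\Bbox_{\pi/4}\), \(\lambda\in\Lambda_0\). Each coordinate circle of length \(2\pi\) is covered by the four intervals of length \(\pi/2\) centred at \(0,\pm\pi/2,\pi\), so these cells cover \(\TT^d\). Their interiors are pairwise disjoint, so overlaps occur only on boundaries, which have measure zero. This is the only sense in which disjointness matters for the integral estimates. If one wants literal disjointness, I would use half-open boxes \(\lambda+[-\pi/4,\pi/4)^d\) as the cells; then every \(\gamma\in\TT^d\) lies in exactly one cell.

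Next I would note that \(\Lambda\subseteq\Lambda_0\) by Lemma~\ref{fact:lambda-facts}(ii). Since \(\delta<\pi/4\), each primary box \(\Bbox_\delta(\lambda)\) sits strictly inside its own cell \(\lambda+\Bbox_{\pi/4}\). So for \(\lambda\in\Lambda\) the set \(\lambda+[\Bbox_{\pi/4}\setminus\Bbox_\delta]\) is exactly that cell with its primary box removed.

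It then remains to show that this set does not meet any other primary box \(\Bbox_\delta(\lambda')\), \(\lambda'\in\Lambda\), \(\lambda'\neq\lambda\). Suppose \(\gamma\in\Bbox_\delta(\lambda')\) with \(\lambda'\ne\lambda\). Some coordinate of \(\lambda'-\lambda\) is a nonzero multiple of \(\pi/2\) modulo \(2\pi\). In that coordinate \(\gamma\) is within \(\delta<\pi/4\) of \(\lambda'\), so it is at distance greater than \(\pi/4\) from \(\lambda\). Hence \(\gamma\notin\lambda+\Bbox_{\pi/4}\). This shows that each set in the union lies in \(R_\delta\), so \(R^{\mathrm{even}}_\delta\subseteq R_\delta\). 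The same computation applied to two distinct \(\lambda,\lambda'\in\Lambda\) shows that their cells have disjoint interiors, or are genuinely disjoint with half-open cells. This gives the disjointness claim.

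Finally, defining \(R^{\mathrm{odd}}_\delta:=R_\delta\setminus R^{\mathrm{even}}_\delta\) makes the decomposition \(R_\delta=R^{\mathrm{even}}_\delta\cup R^{\mathrm{odd}}_\delta\) a tautology. For later use I would also record what the odd part is: up to a null set, \(R^{\mathrm{odd}}_\delta\) is the union of the cells \(\lambda+\Bbox_{\pi/4}\) over \(\lambda\in\Lambda_0\setminus\Lambda\). Indeed, a point of \(R_\delta\) whose cell centre lies in \(\Lambda\) is in the even part by the argument above.

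The only mildly delicate step is the bookkeeping of cell boundaries on the torus, i.e.\ the wraparound at \(\pm\pi\). I would handle it by working with half-open cells throughout, or simply by noting that boundaries are null sets, which is all the integral estimates in Section~\ref{sec:residual} require.
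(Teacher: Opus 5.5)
Your proposal is correct and follows the paper's argument: you tile $\TT^d$ by the quarter-cells centred at $\Lambda_0\supseteq\Lambda$. The cells centred at $\Lambda$ (minus their primary boxes) give the even part, and the cells centred at $\Lambda_0\setminus\Lambda$ give the odd part. Beyond the paper's one-line ``tile'' argument, you also check that each even piece avoids every other primary box, and you correctly note that the closed cells only have disjoint interiors. So the stated disjointness holds up to null boundary sets, or exactly if half-open cells are used.
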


\begin{proof}
Since every coordinate of every \(\lambda\in\Lambda\) lies in \(\{0,\pm\pi/2,\pi\}\) (Lemma~\ref{fact:lambda-facts}(ii)), the translates \(\Bbox_{\pi/4}(\lambda)\), \(\lambda\in\Lambda_0\), tile \(\TT^d\), where \(\Lambda_0\) is the set of all points with coordinates in \(\{0,\pm\pi/2,\pi\}\). The lattice \(\Lambda\) is a subset of~\(\Lambda_0\). Each tile centered at a point in~\(\Lambda\) contributes \(\Bbox_{\pi/4}(\lambda)\setminus\Bbox_\delta(\lambda)\) to \(R_\delta^{\mathrm{even}}\), and each tile centered at a point in \(\Lambda_0\setminus\Lambda\) contributes a full quarter-box to \(R_\delta^{\mathrm{odd}}\).
\end{proof}

On odd cells the characteristic function is bounded by a constant less than one, making the contribution of these cells exponentially small. The following bound is implicit in the proof of~\cite[Proposition~3.2]{DL10}; we give a self-contained argument.

\begin{lemma}[Odd-cell bound]\label{lem:odd-bound}
If \(\gamma\in R_\delta^{\mathrm{odd}}\), then \(|\psi(\gamma)|^2\le \tfrac12\).
\end{lemma}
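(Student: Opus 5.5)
Write \(\gamma=\lambda+\mu\), where \(\lambda\in\Lambda_0\setminus\Lambda\) is the center of the odd tile containing \(\gamma\) and \(\mu\in\Bbox_{\pi/4}\). By Lemma~\ref{fact:lambda-facts}(v), write \(\lambda=\lambda^{(1)}+\lambda^{(2)}\). Since \(\lambda\notin\Lambda\), the graph \(G_{\lambda^{(2)}}\) has some vertex \(k\) of odd degree. The plan is to apply Lemma~\ref{fact:psi-sq} with this \(k\) and show that \(\prod_{i\ne k}\cos(2\gamma_{\{i,k\}})\le 0\). This gives \(|\psi(\gamma)|^2\le\frac12\) directly.

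\textbf{Sign of each factor.} Compute \(2\gamma_{\{i,k\}}=2\lambda^{(1)}_{\{i,k\}}+2\lambda^{(2)}_{\{i,k\}}+2\mu_{\{i,k\}}\). The first term is a multiple of \(2\pi\), so it does not affect the cosine. Hence \(\cos(2\gamma_{\{i,k\}})=\cos(2\mu_{\{i,k\}})\) when \(\{i,k\}\notin G_{\lambda^{(2)}}\). When \(\{i,k\}\in G_{\lambda^{(2)}}\) we get \(\cos(\pi+2\mu_{\{i,k\}})=-\cos(2\mu_{\{i,k\}})\). Since \(|\mu_{\{i,k\}}|\le\pi/4\), we have \(2|\mu_{\{i,k\}}|\le\pi/2\), and therefore \(\cos(2\mu_{\{i,k\}})\ge0\).

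\textbf{Conclusion.} The product equals \((-1)^{\deg_{G_{\lambda^{(2)}}}(k)}\prod_{i\ne k}\cos(2\mu_{\{i,k\}})\). The sign is negative because the degree is odd, and the product of cosines is nonnegative. So the whole product is \(\le 0\), and Lemma~\ref{fact:psi-sq} yields \(|\psi(\gamma)|^2\le\frac12\).

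\textbf{Main obstacle.} The only delicate point is bookkeeping on the torus. The lemma must hold for the representative \(\gamma\in[-\pi,\pi]^d\) that Lemma~\ref{fact:psi-sq} needs, and also on tile boundaries, where \(\gamma\) might belong to two tiles. Neither causes trouble. Shifting a coordinate by \(2\pi\) leaves \(\cos(2\cdot)\) unchanged, so the choice of representative does not matter. On a boundary, \(|\mu_e|=\pi/4\) makes the corresponding cosine vanish, so the product is \(0\) whichever tile is used. The definition of \(R_\delta^{\mathrm{odd}}\) in Lemma~\ref{lem:residual-decomp} ensures that \(\gamma\) lies in at least one tile centered at a point of \(\Lambda_0\setminus\Lambda\), which is all the argument uses.
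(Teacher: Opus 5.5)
Your proposal is correct and follows the paper's proof essentially step for step: write the odd-tile center as \(\lambda^{(1)}+\lambda^{(2)}\), choose a vertex \(k\) of odd degree in \(G_{\lambda^{(2)}}\), and show that each factor equals \(\pm\cos(2\mu_{\{i,k\}})\) with \(\cos(2\mu_{\{i,k\}})\ge 0\) and an odd number of minus signs, so Lemma~\ref{fact:psi-sq} gives \(|\psi(\gamma)|^2\le\tfrac12\). Your remarks on the choice of representative and on tile boundaries are also sound, and they are slightly more careful than the paper, which leaves these points implicit.
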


\begin{proof}
It suffices to show that some \(k\in\{1,\dots,n\}\) satisfies \(\prod_{i\ne k}\cos(2\gamma_{\{i,k\}})\le 0\), since the cosine-product bound (Lemma~\ref{fact:psi-sq}) then gives \(|\psi(\gamma)|^2\le \tfrac12\).

By Lemma~\ref{lem:residual-decomp}, \(\gamma\) lies in a quarter-box
\(\Bbox_{\pi/4}(\lambda)\) with \(\lambda\in\Lambda_0\setminus\Lambda\).
Write \(\lambda=\lambda^{(1)}+\lambda^{(2)}\),
where each coordinate of \(\lambda^{(1)}\) lies in \(\{0,\pi\}\) and each coordinate of
\(\lambda^{(2)}\) lies in \(\{0,\pi/2\}\). By Lemma~\ref{fact:lambda-facts}(v), \(\lambda\in\Lambda\) if and only if every vertex of \(G_{\lambda^{(2)}}\) has even degree. Since \(\lambda\notin\Lambda\), some vertex \(k\) has odd degree, i.e., an odd number of the coordinates \(\lambda^{(2)}_{\{i,k\}}\), \(i\ne k\), equal \(\pi/2\).

Write \(\gamma=\lambda^{(1)}+\lambda^{(2)}+\mu\) with \(\mu\in \Bbox_{\pi/4}\). For each \(i\ne k\),
\[
  \cos(2\gamma_{\{i,k\}})
  =
  \cos\!\bigl(2\lambda^{(1)}_{\{i,k\}}+2\lambda^{(2)}_{\{i,k\}}+2\mu_{\{i,k\}}\bigr).
\]
Since \(2\lambda^{(1)}_{\{i,k\}}\in\{0,2\pi\}\) and
\(2\lambda^{(2)}_{\{i,k\}}\in\{0,\pi\}\), this equals
\((-1)^{b_{ik}}\cos(2\mu_{\{i,k\}})\) with
\(b_{ik}\in\{0,1\}\), and an odd number of the \(b_{ik}\) equal \(1\).
Because \(|\mu_{\{i,k\}}|\le \pi/4\), each factor \(\cos(2\mu_{\{i,k\}})\ge 0\),
so the product \(\prod_{i\ne k}\cos(2\gamma_{\{i,k\}})\) carries an odd number of sign flips and is therefore nonpositive.
\end{proof}

The sixth-order expansion of \(\log\psi\) (Lemma~\ref{lem:sixth}) requires that \(\RePart\psi\) stay bounded away from zero along a path, so that the principal branch of the logarithm is well-defined. The following bound, which is immediate from~\cite[Lemma~3.1, eq.~(3.2)]{DL10}, provides this.

\begin{lemma}[Positivity of $\RePart\psi$ {\cite[Lemma~3.1]{DL10}}]\label{lem:realpart}
If \(s(\lambda)\le 1/2\), then \(\RePart\psi(\lambda)\in[3/4,1]\).
\end{lemma}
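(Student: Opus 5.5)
The statement to prove is Lemma~\ref{lem:realpart}: if \(s(\lambda)=\|\lambda\|^2\le 1/2\), then \(\RePart\psi(\lambda)\in[3/4,1]\). The upper bound is trivial, since \(|\psi|\le 1\). For the lower bound I will use the elementary inequality \(\cos x\ge 1-x^2/2\) applied inside the expectation:
\[
  \RePart\psi(\lambda)=\E[\cos X_\lambda]\ge 1-\tfrac12\E[X_\lambda^2].
\]

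The key step is the second moment of \(X_\lambda=\sum_{i<j}\lambda_{\{i,j\}}\xi_i\xi_j\). The products \(\xi_i\xi_j\) for distinct pairs \(\{i,j\}\) are orthonormal in \(L^2\): \(\E[\xi_i\xi_j\xi_k\xi_l]=0\) unless \(\{i,j\}=\{k,l\}\), because some index then appears an odd number of times. Hence \(\E[X_\lambda^2]=\sum_e\lambda_e^2=s(\lambda)\).

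This gives \(\RePart\psi(\lambda)\ge 1-s(\lambda)/2\ge 1-1/4=3/4\) whenever \(s(\lambda)\le 1/2\).

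No step is a real obstacle. The only point to check is that \(\lambda\) is taken as a real vector in \(\R^d\) (coordinates in \([-\pi,\pi]\) if one works on the torus), so \(X_\lambda\) is real and the cosine inequality applies pointwise. This is the same computation as \cite[eq.~(3.2)]{DL10}.
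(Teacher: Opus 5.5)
Your proposal is correct and follows the same route as the paper: bound $\E[\cos X_\lambda]$ below via $\cos x\ge 1-x^2/2$ and use $\E[X_\lambda^2]=s(\lambda)$. You also spell out two points the paper leaves implicit, namely the orthonormality of the products $\xi_i\xi_j$ and the trivial upper bound $\RePart\psi\le|\psi|\le 1$.
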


\begin{proof}
Since \(\RePart\psi(\lambda)=\E[\cos X_\lambda]\), the bound
\(\cos x\ge 1-x^2/2\) gives
\(\RePart\psi(\lambda)\ge 1-\frac12 s(\lambda)\ge 3/4\).
\end{proof}

Our core and off-core analysis (Sections~\ref{sec:core}--\ref{sec:residual}) yields asymptotics as \(n\to\infty\). For bounded~\(n\), the following fixed-\(n\) result from~\cite{DL10} closes the argument in Section~\ref{sec:counting}.

\begin{lemma}[Fixed-\(n\) asymptotics {\cite[Theorem~4.1]{DL10}}]\label{fact:fixed-n-dl10}
For every fixed integer \(n\ge 2\),
\(N_{n,4t}=[1+o_{t\to\infty}(1)]\,A_{n,4t}\) as \(t\to\infty\).
\end{lemma}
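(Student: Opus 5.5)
The plan is to run the primary--residual decomposition of Proposition~\ref{prop:primary-secondary} with $n$ held fixed and a box half-width $\delta=\delta(t)\to0$ that shrinks slowly. Because $d=\binom n2$ is then a constant, every constant depending on $n$ is harmless. I would take $\delta=t^{-2/5}$. This gives $t\delta^3=t^{-1/5}\to0$, which keeps the cubic error small on the box, and $t\delta^2=t^{1/5}\to\infty$, which makes the Gaussian tails and the residual negligible. By \eqref{eq:counting-identity} it suffices to show $\Prb(S_{4t}=0)=[1+o(1)]\Ahat$. Note that $\Ahat=2^{2d-n+1}(8\pi t)^{-d/2}=K_n\,F(d,t)$, since $K_n(\pi/2t)^{d/2}=2^{2d-n+1}(2\pi)^{-d}(\pi/2t)^{d/2}$.

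\textbf{Primary box.} For $\mu\in\Bbox_\delta$ we have $s(\mu)\le d\delta^2\le1/2$ once $t$ is large, so Lemma~\ref{lem:realpart} gives $\RePart\psi\ge3/4$ and the principal logarithm is defined. Since $\E X_\mu=0$ and $\E X_\mu^2=s(\mu)$ (the products $\xi_i\xi_j$ are orthonormal), Taylor's theorem gives $\log\psi(\mu)=-s(\mu)/2+O_n(\|\mu\|^3)$. Hence
\[
\psi(\mu)^{4t}=e^{-2ts(\mu)}\bigl(1+O_n(t\delta^3)\bigr)=e^{-2ts(\mu)}(1+o(1))
\]
uniformly on $\Bbox_\delta$. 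Integrating, $\int_{\Bbox_\delta}\psi^{4t}=(1+o(1))\int_{\Bbox_\delta}e^{-2ts}$. The Gaussian mass outside $\Bbox_\delta$ is at most $d$ times a one-dimensional tail, which is $F(d,t)\cdot O(e^{-2t\delta^2})=o(F(d,t))$. So the primary term equals $(1+o(1))K_nF(d,t)=(1+o(1))\Ahat$.

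\textbf{Residual.} The odd cells are handled by Lemma~\ref{lem:odd-bound}: their contribution is at most $2^{-2t}=o(t^{-d/2})$. On an even cell, Lemma~\ref{fact:lambda-facts}(i) gives $|\psi(\lambda+\mu)|=|\psi(\mu)|$ with $\mu\in\Bbox_{\pi/4}\setminus\Bbox_\delta$, so it suffices to bound $|\psi(\mu)|$ there. I would split this set at a small fixed radius $r$.
\begin{itemize}[nosep]
\item For $\|\mu\|\le r$, the expansion above gives $|\psi(\mu)|\le e^{-s(\mu)/4}\le e^{-\delta^2/4}$, because $\|\mu\|\ge\delta$ off $\Bbox_\delta$.
\item For $\mu$ in the compact set $\overline{\Bbox_{\pi/4}}\setminus\{\|\mu\|<r\}$, $|\psi|<1$ pointwise. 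Indeed $|\psi|=1$ only on $\Lambda$, and by Lemma~\ref{fact:lambda-facts}(ii) the only point of $\Lambda$ in the closed quarter-box is $0$, since coordinates $\pm\pi/4$ are not in $\{0,\pm\pi/2,\pi\}$. Continuity then gives $|\psi|\le q_n<1$ on this set.
\end{itemize}
Altogether $|\psi|^{4t}\le\max(q_n^{4t},e^{-t\delta^2})\le e^{-t^{1/5}}$ on $R_\delta$. Since the torus has finite volume, the residual is $O(e^{-t^{1/5}})=o(t^{-d/2})=o(\Ahat)$.

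\textbf{Main obstacle.} The delicate point is the uniform statement $|\psi|<1$ on the even residual away from the origin, that is, correctly identifying $\Lambda\cap\overline{\Bbox_{\pi/4}}=\{0\}$ and then arguing by compactness. The constant $q_n$ is not quantitative in $n$, but that is fine for fixed $n$ and is exactly why this argument gives nothing uniform as $n$ grows. The remaining steps are routine Taylor and Gaussian-tail estimates with $n$-dependent constants.
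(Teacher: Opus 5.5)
Your argument is correct. Note that the paper gives no proof of this lemma: it is imported from \cite{DL10} and used only in Corollary~\ref{cor:uniform}, to cover the finitely many $n<N_0$. The relevant comparison is therefore with the de~Launey--Levin argument as summarized in Section~\ref{sec:comparison}.

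There, every estimate is explicit in $n$. On the primary box they bound $\RePart(\psi^{4t})$ below by $(\RePart\psi)^{4t}$ using $\ImPart\psi/\RePart\psi\le C(n\delta)^3$, which forces $\delta\le C(tn^3)^{-1/3}$. They bound the whole residual by the uniform estimate $|\psi|\le\cos\delta$.

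You instead let every constant depend on $n$. The cubic phase is simply absorbed into the $O_n(t\delta^3)$ Taylor error, so no real/imaginary splitting or antisymmetry is needed. The far part of the even cells is handled by compactness, using $\Lambda\cap\Bbox_{\pi/4}=\{0\}$ to get a non-effective $q_n<1$. This is the standard local-limit-theorem argument for a lattice walk. With $\delta=t^{-2/5}$ all the pieces check out: $\Ahat=K_nF(d,t)$, the Gaussian tail outside $\Bbox_\delta$, the even-cell reduction via Lemma~\ref{fact:lambda-facts}(i), and the odd cells via Lemma~\ref{lem:odd-bound}. The argument also covers $n=2$.

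What your route buys is brevity and self-containedness. Together with the lattice lemmas the paper already quotes, it would let the paper drop this external citation. What it gives up is any control on how large $t$ must be relative to $n$. The quantitative bounds of de~Launey and Levin are exactly what yield their $t\gg n^{12}$ range, and the machinery of Sections~\ref{sec:core}--\ref{sec:residual} is what reaches $t\ge C_0n^3$.
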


\paragraph{Standard analytic tools.}
The cumulant expansion on the core produces polynomial corrections to the Gaussian integrand. We control these using three standard estimates.

\begin{lemma}[Hypercontractive inequality {\cite[Theorems~9.22 and~11.23]{ODo2014}}]\label{lem:hc}
Let \(H\) be a polynomial of degree at most \(q\) in either independent Rademacher signs or independent centered Gaussians. Then for every \(p\ge 2\),
\(\|H\|_{L^p}\le (p-1)^{q/2}\|H\|_{L^2}\).
\end{lemma}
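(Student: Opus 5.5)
The plan is to prove the Rademacher case by the standard noise-operator argument and then obtain the Gaussian case as a central-limit limit of the Rademacher case.

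\emph{Step 1: two-point inequality.} For \(\rho\in[0,1]\) let \(T_\rho\) be the noise operator acting on Rademacher multilinear polynomials by \(T_\rho\chi_S=\rho^{|S|}\chi_S\). First I would establish the \((2,p)\)-hypercontractivity
\[
  \|T_\rho f\|_{L^p}\le\|f\|_{L^2},\qquad \rho=(p-1)^{-1/2},
\]
for functions of a single sign \(x\in\{\pm1\}\). Such a function has the form \(f=a+bx\), and the claim is
\[
  \bigl(\tfrac12|a+\rho b|^p+\tfrac12|a-\rho b|^p\bigr)^{1/p}\le (a^2+b^2)^{1/2}.
\]
After normalizing \(a=1\) (the cases \(a=0\) and \(|\rho b|\ge 1\) need separate handling), I would expand both sides in powers of \(b^2\) and compare term by term using \(\binom{p}{2k}\rho^{2k}\le\binom{p/2}{k}\) for \(\rho^2=1/(p-1)\). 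This is the textbook Bonami--Gross computation.

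\emph{Step 2: tensorization.} I would extend the one-sign inequality to \(n\) independent signs by induction on the number of coordinates. The inductive step applies the one-variable bound in the last coordinate and then Minkowski's integral inequality in \(L^{p/2}\) to exchange the order of the \(L^p\) and \(L^2\) norms. This yields \(\|T_\rho f\|_{L^p}\le\|f\|_{L^2}\) for all \(f:\{\pm1\}^n\to\R\).

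\emph{Step 3: apply to degree-\(q\) polynomials.} Since \(x_i^2=1\), any polynomial \(H\) of degree at most \(q\) in Rademacher signs is multilinear, \(H=\sum_{|S|\le q}\hat H(S)\chi_S\). Set
\[
  G=\sum_{|S|\le q}\rho^{-|S|}\hat H(S)\chi_S,
\]
so that \(H=T_\rho G\). By Parseval, \(\|G\|_2^2=\sum_S\rho^{-2|S|}\hat H(S)^2\le\rho^{-2q}\|H\|_2^2\). Steps 1--2 then give
\[
  \|H\|_p\le\|G\|_2\le(p-1)^{q/2}\|H\|_2.
\]

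\emph{Step 4: Gaussian case.} Let \(H(g_1,\dots,g_m)\) be a polynomial of degree at most \(q\) in independent standard Gaussians; by rescaling, centered Gaussians of arbitrary variance reduce to this case. I would substitute \(g_i\approx N^{-1/2}\sum_{r=1}^N x_{i,r}\) with all \(x_{i,r}\) independent signs. The resulting function \(H_N\) is a polynomial of degree at most \(q\) in Rademacher variables, so Step 3 gives \(\|H_N\|_p\le(p-1)^{q/2}\|H_N\|_2\). As \(N\to\infty\), the multivariate CLT, together with uniform boundedness of all moments of normalized Rademacher sums, gives \(\E|H_N|^p\to\E|H|^p\) and \(\E H_N^2\to\E H^2\). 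Letting \(N\to\infty\) in the inequality proves the Gaussian case.

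The main obstacle is the elementary two-point inequality in Step 1, specifically the coefficient comparison for non-integer \(p\) and the sign/absolute-value cases. If that becomes messy, I would instead prove only the integer case \(p=4\) by direct expansion and obtain general \(p\) via Gross's log-Sobolev route. Since the paper only cites this result from \cite{ODo2014}, in the text itself I would simply defer to that reference.
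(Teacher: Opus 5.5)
The paper gives no proof of this lemma; it only cites O'Donnell. Your architecture is the standard one behind those results, and Steps 2--4 are fine as written: tensorization via Minkowski in $L^{p/2}$, the $H=T_\rho G$ Parseval step, and the CLT transfer with uniformly integrable moments. The gap is in Step 1, and it is more than messiness. The coefficient inequality $\binom{p}{2k}\rho^{2k}\le\binom{p/2}{k}$ with $\rho^2=1/(p-1)$ is false for general real $p\ge 2$. For example, with $p=3$ and $k=3$ the left side is $\binom{3}{6}\rho^6=0$, while the right side is $\binom{3/2}{3}=-\tfrac1{16}$.

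In fact the comparison fails for every $p\notin 2\mathbb{N}$. The coefficients $\binom{p/2}{k}$ are negative for infinitely many $k$ and decay only polynomially, while $\binom{p}{2k}\rho^{2k}$ decays geometrically. The two-point inequality is true, but only through cancellation among higher-order terms, which a termwise comparison cannot detect. The series for $(1+b^2)^{p/2}$ also diverges for $|b|\ge 1$, so the range $1\le|b|<1/\rho$ would be uncovered even with good coefficients. Your fallback, Gross's log-Sobolev route, is a different proof that you have not sketched. Proving only $p=4$ and using monotonicity of $L^p$ norms gives the constant of the next even integer, not $(p-1)^{q/2}$.

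There are two ways to repair Step 1.

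\emph{Even $p$.} For $p=2m$ your computation is correct with no special cases. Both sides are polynomials in $b$, and $\binom{2m}{2k}(2m-1)^{-k}\le\binom{m}{k}$ because $\binom{2m}{2k}/\binom{m}{k}=\prod_{j=1}^{k}(2m-2j+1)\big/(2k-1)!!\le(2m-1)^k$. This already covers every use of the lemma in the paper: $p=6$ in Lemma~\ref{lem:sixth}, and $p=4$ in Proposition~\ref{prop:quartic-bounds} and Lemma~\ref{lem:cubic}.

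\emph{All real $p\ge 2$.} Prove the single-bit bound in dual form. Set $p'=p/(p-1)\in(1,2]$ and $\rho=\sqrt{p'-1}=(p-1)^{-1/2}$, and show $\|T_\rho f\|_2\le\|f\|_{p'}$ for $f=a+bx$. The right side is symmetric in $(a,b)$, and $a^2+\rho^2b^2\le b^2+\rho^2a^2$ when $|b|\ge|a|$. So you may assume $|b|\le|a|$, normalize $a=1$, and take $|b|<1$ by continuity. Then $\tfrac12(|1+b|^{p'}+|1-b|^{p'})=\sum_k\binom{p'}{2k}b^{2k}$ has all coefficients nonnegative, since $p'-2,\dots,p'-2k+1$ is an even number of nonpositive factors. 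Hence this sum is at least $1+\binom{p'}{2}b^2$. Bernoulli's inequality with exponent $2/p'\ge 1$ then gives $\|f\|_{p'}^2\ge 1+(p'-1)b^2=\|T_\rho f\|_2^2$. Self-adjointness of $T_\rho$ and H\"older give $\|T_\rho f\|_p=\sup_{\|g\|_{p'}\le 1}\E[(T_\rho g)f]\le\|f\|_2$, which is the single-bit $(2,p)$ bound. Your Steps 2--4 then apply unchanged.
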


Hypercontractivity converts \(L^p\) norms of \(Q\) and \(P\) into \(L^2\) norms, which are explicit. The following moment bound controls the sixth-order remainder against the Gaussian weight.

\begin{lemma}[Gaussian radial moments]\label{lem:gaussian-radial}
For every integer \(m\ge 0\) there is a constant \(C_m>0\) such that
\(\int_{\R^d}\|\lambda\|^{2m}e^{-2t\|\lambda\|^2}\,d\lambda
\le C_m(d/t)^m F(d,t)\) for all \(d,t\ge 1\).
\end{lemma}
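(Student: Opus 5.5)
We reduce the statement to a one-dimensional integral by passing to polar coordinates. Write the integrand as a radial function, \(\|\lambda\|^{2m}e^{-2t\|\lambda\|^2}\), and use \(d\lambda = \omega_{d-1}\rho^{d-1}\,d\rho\) with \(\omega_{d-1}=2\pi^{d/2}/\Gamma(d/2)\). Then substitute \(u=2t\rho^2\). This yields the exact identity
\[
  \int_{\R^d}\|\lambda\|^{2m}e^{-2t\|\lambda\|^2}\,d\lambda
  =\frac{\pi^{d/2}}{\Gamma(d/2)}(2t)^{-d/2-m}\,\Gamma\!\left(\tfrac d2+m\right).
\]
The case \(m=0\) of this identity is \(F(d,t)=(\pi/(2t))^{d/2}\), which we use as a consistency check. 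Dividing the general case by it gives
\[
  \frac{\int_{\R^d}\|\lambda\|^{2m}e^{-2t\|\lambda\|^2}\,d\lambda}{F(d,t)}
  =(2t)^{-m}\frac{\Gamma(d/2+m)}{\Gamma(d/2)}
  =(2t)^{-m}\prod_{j=0}^{m-1}\left(\tfrac d2+j\right).
\]
Equivalently, one can read the left side as \(\E\|G\|^{2m}\) for \(G\sim N(0,(4t)^{-1}I_d)\). Then \(4t\|G\|^2\) is \(\chi^2_d\), whose \(m\)-th moment is \(\prod_{j<m}(d+2j)\), and this gives the same product.

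It remains to bound the rising factorial. For \(d\ge1\) and \(0\le j\le m-1\) we have \(d/2+j\le (1/2+j)d\le m\,d\). Hence the product is at most \(m^m d^m\), and the ratio is at most \((m/2)^m (d/t)^m\). So we may take \(C_m=\max(1,(m/2)^m)\); for \(m=0\) the bound is the equality \(F=F\).

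There is no real obstacle here. The only step requiring any care is keeping track of the normalization constants in the polar-coordinate computation. That can be avoided entirely by the probabilistic reading: the ratio equals \(\E\|G\|^{2m}\) for the normalized Gaussian density \(e^{-2t\|\lambda\|^2}/F(d,t)\), so the constant \(\pi^{d/2}/\Gamma(d/2)\) never has to be computed.
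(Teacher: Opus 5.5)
Your argument is correct and complete. The exact identity
\[
\int_{\R^d}\|\lambda\|^{2m}e^{-2t\|\lambda\|^2}\,d\lambda=(2t)^{-m}\frac{\Gamma(d/2+m)}{\Gamma(d/2)}\,F(d,t)
\]
(equivalently, the \(m\)-th moment of \(\chi^2_d\)), combined with \(d/2+j\le md\) for \(d\ge1\) and \(0\le j\le m-1\), gives the bound with \(C_m=\max(1,(m/2)^m)\).

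The paper states this lemma without proof as a standard analytic tool, so your computation is exactly the routine verification it leaves implicit. Note that the hypothesis \(t\ge1\) is never used, since the ratio to \(F(d,t)\) scales exactly as \(t^{-m}\).
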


The following identity is used in two ways: the real case drives is used in vertex-peeling argument in the quartic bounds (Proposition~\ref{prop:quartic-bounds}), and the complex case gives a determinant formula for the Gaussian characteristic function \(\psi_G\) in the far-shell decomposition (Proposition~\ref{prop:far-shell}).

\begin{lemma}[Gaussian quadratic integral]\label{lem:gaussian-quadratic}
Let \(A\) be a complex symmetric \(m\times m\) matrix whose real part is positive definite. Then
\(\int_{\R^m}e^{-x^\top A x}\,dx=\pi^{m/2}\det(A)^{-1/2}\).
In particular, if \(g\sim N(0,I_m)\) and \(M\) is real symmetric, taking \(A=\tfrac12(I-iM)\) gives \(\E[e^{ig^\top Mg/2}]=\det(I-iM)^{-1/2}\) and \(|\E[e^{ig^\top Mg/2}]|=\det(I+M^2)^{-1/4}\).
\end{lemma}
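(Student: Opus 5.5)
The proof diagonalizes \(A\) and reduces to a product of one-dimensional complex Gaussian integrals, using analytic continuation to fix the branch of the square root. The second part then follows by substitution.

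\textbf{Setup.} Write \(A=R+iS\) with \(R\) real symmetric positive definite and \(S\) real symmetric. Both sides of the identity are analytic in the entries of \(S\), viewed as \(m(m+1)/2\) complex variables. The branch of \(\det(A)^{-1/2}\) is fixed by continuity from the real case \(S=0\), where it is the positive root.

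\textbf{Reduction to a diagonal problem.} Substitute \(x=R^{-1/2}y\). This gives
\[
  \int_{\R^m}e^{-x^\top Ax}\,dx=\det(R)^{-1/2}\int_{\R^m}e^{-y^\top(I+iS')y}\,dy,
  \qquad S'=R^{-1/2}SR^{-1/2}.
\]
Here \(S'\) is real symmetric, so it is orthogonally diagonalizable as \(S'=O^\top\diag(\sigma_1,\dots,\sigma_m)O\). Rotating by \(O\) (Jacobian \(1\)) factors the integral into a product of one-dimensional integrals \(\int_{\R}e^{-(1+i\sigma_j)u^2}\,du\).

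\textbf{The one-dimensional integral.} We claim \(\int_{\R}e^{-(1+i\sigma)u^2}\,du=\sqrt{\pi}\,(1+i\sigma)^{-1/2}\), with the principal branch. Both sides are analytic in \(z=1+i\sigma\) on the half-plane \(\RePart z>0\); the left side by dominated convergence, since \(|e^{-zu^2}|=e^{-\RePart(z)u^2}\). They agree for real \(z>0\), so they agree on the whole half-plane by the identity theorem.

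\textbf{Assembling.} Multiplying the factors gives \(\pi^{m/2}\det(R)^{-1/2}\prod_j(1+i\sigma_j)^{-1/2}\). Since \(\det(A)=\det(R)\det(I+iS')\), this equals \(\pi^{m/2}\det(A)^{-1/2}\) for the branch defined by continuity along the path \(S\mapsto\tau S\), \(\tau\in[0,1]\). Along this path \(\det(I+i\tau S')\) never vanishes, because each factor \(1+i\tau\sigma_j\) has real part \(1\).

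\textbf{The second part.} Write
\[
  \E[e^{ig^\top Mg/2}]=(2\pi)^{-m/2}\int e^{-x^\top\frac12(I-iM)x}\,dx.
\]
Applying the identity with \(A=\tfrac12(I-iM)\), whose real part \(\tfrac12 I\) is positive definite, gives
\[
  (2\pi)^{-m/2}\pi^{m/2}\,2^{m/2}\det(I-iM)^{-1/2}=\det(I-iM)^{-1/2}.
\]
For the modulus, diagonalize \(M\) with eigenvalues \(\mu_j\):
\[
  |\det(I-iM)|=\prod_j|1-i\mu_j|=\prod_j(1+\mu_j^2)^{1/2}=\det(I+M^2)^{1/2}.
\]
Hence \(|\E[e^{ig^\top Mg/2}]|=\det(I+M^2)^{-1/4}\).

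The only delicate step is the branch bookkeeping in the one-dimensional integral. It is handled by the analytic continuation argument, together with the observation that the determinant never vanishes along the path.
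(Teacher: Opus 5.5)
Your proposal is correct and follows essentially the same route as the paper: whiten by \(\RePart(A)^{-1/2}\), orthogonally diagonalize the resulting real symmetric matrix, factor into one-dimensional integrals \(\int_{\R}e^{-(1+is_j)z^2}\,dz\), and reassemble using \(\det(A)=\det(\RePart A)\det(I+iS)\). Your analytic-continuation proof of the one-dimensional formula and your choice of the continuous branch of \(\det(A)^{-1/2}\) along \(\tau\mapsto R+i\tau S\) make rigorous what the paper leaves implicit. You also carry out the \(A=\tfrac12(I-iM)\) specialization, which the paper states without spelling out.
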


\begin{proof}
Write \(A=\RePart(A)+i\,\ImPart(A)\). The substitution \(y=\RePart(A)^{1/2}x\) gives
\[
  \int_{\R^m}e^{-x^\top Ax}\,dx
  =\det(\RePart A)^{-1/2}\int_{\R^m}e^{-\|y\|^2-iy^\top Sy}\,dy,
\]
where \(S=\RePart(A)^{-1/2}\ImPart(A)\,\RePart(A)^{-1/2}\) is real symmetric.
Diagonalizing \(S=O^\top\!\operatorname{diag}(s_1,\dots,s_m)O\) with \(O\) orthogonal and setting \(z=Oy\), the integral factors as
\[
  \prod_{j=1}^m\int_\R e^{-(1+is_j)z_j^2}\,dz_j
  =\prod_{j=1}^m\sqrt{\frac{\pi}{1+is_j}}
  =\pi^{m/2}\det(I+iS)^{-1/2}.
\]
Since \(A=\RePart(A)^{1/2}(I+iS)\,\RePart(A)^{1/2}\), we have \(\det(A)=\det(\RePart A)\det(I+iS)\), and the result follows.
\end{proof}

\paragraph{Auxiliary lemma.}
The following estimate shows that the Gaussian integral over the core \(\corereg\) captures nearly all of the full Gaussian integral \(F(d,t)=\int_{\R^d}e^{-2t\|\lambda\|^2}\,d\lambda=(\pi/(2t))^{d/2}\).

\begin{lemma}[Core Gaussian mass]\label{lem:inner-core}
There exists an absolute constant \(c_*>0\) such that
\(\core(d,t)\ge (1-e^{-c_*d})F(d,t)\).
\end{lemma}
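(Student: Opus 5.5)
The statement is a concentration bound for the norm of a Gaussian vector, and I will prove it by rescaling to a chi-square variable and applying a Chernoff bound. Write \(F(d,t)-\core(d,t)=\int_{\|\lambda\|^2>d/t}e^{-2t\|\lambda\|^2}\,d\lambda\), and divide by \(F(d,t)\). This ratio is the probability that a random vector \(X\) with density proportional to \(e^{-2t\|\lambda\|^2}\) satisfies \(\|X\|^2>d/t\). Such an \(X\) is distributed as \(N(0,\frac{1}{4t}I_d)\), so \(X=g/(2\sqrt t)\) with \(g\sim N(0,I_d)\). The event \(\|X\|^2>d/t\) therefore becomes \(\|g\|^2>4d\), and it suffices to show
\[
  \Prb\bigl(\|g\|^2>4d\bigr)\le e^{-c_*d}
\]
for some absolute constant \(c_*>0\).

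The event asks for \(\|g\|^2\) to exceed its mean \(d\) by a factor of four, so a Chernoff bound suffices. For \(0<\theta<1/2\), Markov's inequality applied to \(e^{\theta\|g\|^2}\), together with \(\E e^{\theta g_1^2}=(1-2\theta)^{-1/2}\), gives
\[
  \Prb(\|g\|^2>4d)\le (1-2\theta)^{-d/2}e^{-4\theta d}.
\]
Take \(\theta=1/4\). The bound becomes \(2^{d/2}e^{-d}=\exp\bigl(-d(1-\tfrac12\log 2)\bigr)\). Since \(1-\tfrac12\log2\approx0.65>0\), this is the claim with \(c_*=1-\tfrac12\log2\).

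This bound holds for every \(d\ge1\) and every \(t>0\), with no dependence on \(t\), so no case split is needed. The only point to check is the change of variables \(\lambda=g/(2\sqrt t)\): the Jacobian factor \((2\sqrt t)^{-d}\) multiplies both \(\core(d,t)\) and \(F(d,t)\), so it cancels in the ratio. I do not expect any real obstacle beyond this bookkeeping.
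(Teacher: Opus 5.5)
Your proof is correct and is the paper's argument in probabilistic language: the paper bounds the tail pointwise by \(e^{-2t\|\lambda\|^2}\le e^{-d}e^{-t\|\lambda\|^2}\) on \(\{\|\lambda\|^2>d/t\}\), which is exactly your Markov/Chernoff step with \(\theta=1/4\). Both give the same constant \(c_*=1-\tfrac12\log 2\).
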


\begin{proof}
On the region \(\|\lambda\|^2>d/t\), one has \(t\|\lambda\|^2>d\), hence
\(e^{-2t\|\lambda\|^2}\le e^{-d}e^{-t\|\lambda\|^2}\). Therefore
\[
  \core(d,t)
  =F(d,t)-\int_{\|\lambda\|^2>d/t}e^{-2t\|\lambda\|^2}\,d\lambda
  \ge F(d,t)-e^{-d}\int_{\R^d}e^{-t\|\lambda\|^2}\,d\lambda
  =(1-e^{-c_*d})F(d,t),
\]
where \(c_*=1-\frac12\log 2>0\).
\end{proof}

\section{Core}\label{sec:core}

This section establishes the primary estimate: Proposition~\ref{prop:primary-box} gives a first-order expansion of \(K_n\RePart\int_{\corereg}\psi^{4t}\) with leading correction \(-\binom{n}{3}/(8t)\) for \(t\ge C_0n^3\) and \(n\) large. Combined with the off-core upper bounds in Section~\ref{sec:residual}, this proves the main counting theorem.

The argument expands \(\log\psi\) to fifth order on \(\corereg\), isolates the cubic phase \(T(\lambda)\), and shows that \(Q\), \(P\), and \(E_6\) are negligible when \(t\gg n^3\). The imaginary part of the cubic phase vanishes by antisymmetry of~\(T\), and the real part differs from the Gaussian integral \(\core(d,t)\) by \(O(n^3/t)\).

\subsection{Cumulants for the log expansion}

We compute the low-order cumulants of \(X_\lambda\) entering the Taylor expansion of \(\log\psi(\lambda)=\log \E[e^{iX_\lambda}]\). The identities below produce the explicit cubic, quartic, and quintic phase terms \(T\), \(Q\), and \(P\).

The first identity is implicit in the proof of~\cite[Lemma~3.1]{DL10}; we state it separately for later reference.

\begin{lemma}[Triangle formula]\label{lem:triangle}
For every \(\lambda\in\R^d\),
\(T(\lambda)=\sum_{1\le i<j<k\le n}\lambda_{ij}\lambda_{ik}\lambda_{jk}\).
\end{lemma}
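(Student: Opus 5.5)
The plan is to compute the third cumulant directly. Since \(\E[X_\lambda]=0\), we have \(\kappa_3(\lambda)=\E[X_\lambda^3]\), so the claim reduces to showing \(\E[X_\lambda^3]=6\sum_{i<j<k}\lambda_{ij}\lambda_{ik}\lambda_{jk}\).

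First, \(\E[X_\lambda]=\sum_{i<j}\lambda_{ij}\E[\xi_i\xi_j]=0\), because for \(i\ne j\) the independent signs give \(\E[\xi_i\xi_j]=0\). Hence the third cumulant equals the third moment:
\[
  \kappa_3(\lambda)=\E[X_\lambda^3]=\sum_{e,f,g}\lambda_e\lambda_f\lambda_g\,\E[\xi_{e}\xi_{f}\xi_{g}],
\]
where the sum runs over ordered triples of edges of \(K_n\), and \(\xi_e:=\xi_i\xi_j\) for \(e=\{i,j\}\).

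Next, since \(\xi_i^2=1\), the product \(\xi_e\xi_f\xi_g\) equals \(\prod_{v}\xi_v^{m_v}\), where \(m_v\) is the number of edges among \(e,f,g\) (with multiplicity) incident to \(v\). Its expectation is \(1\) if every \(m_v\) is even and \(0\) otherwise. So we must classify the ordered edge triples whose multiset forms an even subgraph (every vertex of even degree).

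\begin{itemize}[nosep]
\item If two of the edges coincide, say \(e=f\), then the third edge \(g\) alone must have all degrees even. This is impossible, since both endpoints of \(g\) have degree \(1\).
\item If all three edges are distinct, we need a simple graph with three edges and all degrees even. The only such graph is a triangle, since an even graph is a union of cycles and the only cycle with at most three edges is a triangle.
\end{itemize}

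Each triangle \(\{i,j,k\}\) arises from \(3!=6\) ordered triples. Therefore
\[
  \E[X_\lambda^3]=6\sum_{i<j<k}\lambda_{ij}\lambda_{ik}\lambda_{jk},
\]
and dividing by \(6\) gives the claimed formula for \(T(\lambda)=\kappa_3(\lambda)/6\).

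There is no real obstacle here. The only care needed is the parity classification, in particular excluding repeated-edge triples, which vanish because the leftover edge has odd-degree endpoints.
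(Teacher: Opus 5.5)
Your proposal is correct and follows essentially the same route as the paper: expand \(\E[X_\lambda^3]\) over ordered edge triples, keep only the even multigraphs, identify them as triangles, and count the \(3!\) orderings. You also spell out two small steps that the paper leaves implicit, namely \(\kappa_3=\E[X_\lambda^3]\) (because \(\E[X_\lambda]=0\)) and the exclusion of repeated-edge triples.
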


\begin{proof}
Expanding the cube gives
\[
  \E[X_\lambda^3]
  =\sum_{e_1,e_2,e_3}\lambda_{e_1}\lambda_{e_2}\lambda_{e_3}
    \E[\chi_{e_1}\chi_{e_2}\chi_{e_3}],
\]
where \(\chi_{\{i,j\}}=\xi_i\xi_j\). The expectation vanishes unless every vertex index appears with even multiplicity. With three edges and no loops, the only such multigraph is a triangle on three distinct vertices. Thus the only surviving ordered triples are the \(3!\) orderings of $(\{i,j\},\{i,k\},\{j,k\})$ for $1\le i<j<k\le n.$
For each such ordering, $(\xi_i\xi_j)(\xi_i\xi_k)(\xi_j\xi_k)=1.$ 
Summing over all triangles and their \(3!\) orderings gives
\[
  \E[X_\lambda^3]
  =6\sum_{1\le i<j<k\le n}\lambda_{ij}\lambda_{ik}\lambda_{jk}.
\]
Since \(T(\lambda)=\frac16\E[X_\lambda^3]\) by definition, the claim follows.
\end{proof}

Define
\[
  Q(\lambda):=\frac{\kappa_4(\lambda)}{24}.
\]
The next lemma gives an explicit formula for~\(Q\). Write
\begin{equation}\label{eq:C4}
  \mathcal{C}_4(\lambda)
  =
    \sum_{\substack{i_1,i_2,i_3,i_4\\ \text{distinct}}}
      \lambda_{\{i_1,i_2\}}
      \lambda_{\{i_2,i_3\}}
      \lambda_{\{i_3,i_4\}}
    \lambda_{\{i_4,i_1\}}
\end{equation}
for the sum over ordered \(4\)-cycles.

\begin{lemma}[Fourth cumulant]\label{lem:ordered-cycle}
\begin{equation}\label{eq:Q}
  Q(\lambda)
  =
  -\frac1{12}\sum_e\lambda_e^4+\frac18\mathcal{C}_4(\lambda).
\end{equation}
\end{lemma}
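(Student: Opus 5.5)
The plan is to compute \(\kappa_4(\lambda)\) directly from moments, exactly as in the proof of Lemma~\ref{lem:triangle}. Since \(\E[\chi_e]=0\) for every edge \(e\), the variable \(X_\lambda\) is centered, so \(\kappa_4=\E[X_\lambda^4]-3(\E[X_\lambda^2])^2\). The characters \(\chi_{\{i,j\}}=\xi_i\xi_j\) are orthonormal, which gives \(\E[X_\lambda^2]=s(\lambda)=\sum_e\lambda_e^2\). It therefore remains to evaluate \(\E[X_\lambda^4]\).

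Expanding the fourth power gives
\[
\E[X_\lambda^4]=\sum_{e_1,\dots,e_4}\lambda_{e_1}\lambda_{e_2}\lambda_{e_3}\lambda_{e_4}\,\E[\chi_{e_1}\chi_{e_2}\chi_{e_3}\chi_{e_4}].
\]
Each expectation equals \(1\) if every vertex appears with even multiplicity in the multigraph \(\{e_1,\dots,e_4\}\), and \(0\) otherwise. I would classify the loopless multigraphs with four edges and all degrees even. Such a multigraph decomposes into closed walks, which leaves three types: (a) a single edge taken four times; (b) two distinct edges, each taken twice; (c) a \(4\)-cycle on four distinct vertices. Every other configuration fails the parity condition. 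For example, a triangle plus an extra edge has odd-degree vertices, and so does a path.

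Next I count the ordered tuples of each type.
\begin{itemize}[nosep]
\item Type (a) contributes \(\sum_e\lambda_e^4\).
\item Type (b): each unordered pair \(e\ne f\) has \(\binom42=6\) orderings. This gives \(6\sum_{e<f}\lambda_e^2\lambda_f^2=3\bigl[s(\lambda)^2-\sum_e\lambda_e^4\bigr]\).
\item Type (c): each \(4\)-cycle, viewed as an edge set, has \(4!=24\) orderings of its edges. In \(\mathcal C_4\) of \eqref{eq:C4}, each such cycle is counted \(8\) times (four starting vertices times two directions). So type (c) contributes \(24\cdot\mathcal C_4(\lambda)/8=3\,\mathcal C_4(\lambda)\).
\end{itemize}

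Summing the three types gives \(\E[X_\lambda^4]=3s(\lambda)^2-2\sum_e\lambda_e^4+3\,\mathcal C_4(\lambda)\). Subtracting \(3s(\lambda)^2\) yields \(\kappa_4=-2\sum_e\lambda_e^4+3\,\mathcal C_4(\lambda)\). Dividing by \(24\) gives \eqref{eq:Q}.

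The only delicate points are the exhaustive classification of even multigraphs and the symmetry factor \(8\) relating ordered vertex sequences in \(\mathcal C_4\) to edge orderings. I expect the factor \(8\) to be the main place where an error could creep in, so I would double-check it on a single \(4\)-cycle.
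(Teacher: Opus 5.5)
Your proposal is correct and follows the paper's proof essentially line by line. You write \(\kappa_4=\E[X_\lambda^4]-3s(\lambda)^2\), classify the even four-edge multigraphs into the quadruple-edge, two-double-edge and \(4\)-cycle types with weights \(1\), \(6\) and \(24/8=3\), and subtract \(3s(\lambda)^2\). Your explicit check that no other parity-even configurations occur fills in a step the paper leaves implicit.
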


\begin{proof}
Since \(\kappa_4(\lambda)=\E[X_\lambda^4]-3s(\lambda)^2\), we expand \(\E[X_\lambda^4]\) by writing four copies of \(X_\lambda=\sum_e\lambda_e\chi_e\) and keeping only terms where every vertex has even multiplicity:
\begin{enumerate}[label=(\roman*),nosep]
\item All four edges equal: contributes \(\sum_e\lambda_e^4\).
\item Two distinct edges, each chosen twice: \(\binom42=6\) orderings, contributing \(6\sum_{e<f}\lambda_e^2\lambda_f^2\).
\item Four distinct edges forming an undirected \(4\)-cycle: each cycle monomial \(\lambda_{ab}\lambda_{bc}\lambda_{cd}\lambda_{da}\) appears \(4!=24\) times in the raw expansion and \(8\) times in \(\mathcal{C}_4\) (two cyclic orientations, four starting points), giving coefficient \(24/8=3\).
\end{enumerate}
Therefore
\(\E[X_\lambda^4]
  =\sum_e\lambda_e^4
    +6\sum_{e<f}\lambda_e^2\lambda_f^2
    +3\mathcal{C}_4(\lambda)\).
Since \(3s(\lambda)^2=3\sum_e\lambda_e^4+6\sum_{e<f}\lambda_e^2\lambda_f^2\), subtracting gives \(\kappa_4(\lambda)=-2\sum_e\lambda_e^4+3\mathcal{C}_4(\lambda)\), and dividing by \(24\) yields~\eqref{eq:Q}.
\end{proof}

The formulas~\eqref{eq:C4}--\eqref{eq:Q} express \(\mathcal{C}_4\) and \(Q\) as polynomials in the entries of any symmetric zero-diagonal matrix; we write \(Q_m\) and \(\mathcal{C}_{4,m}\) when the matrix is \(m\times m\). The following lemma is the main tool for the vertex-peeling argument that controls \(Q\).

\begin{lemma}[Peeling identity]\label{lem:peeling}
Let \(A\) be a symmetric zero-diagonal \(n\times n\) matrix with entries \(A_{ij}=\lambda_{\{i,j\}}\). Write \(B\) for the \((n{-}1)\times(n{-}1)\) principal submatrix obtained by deleting row and column \(n\), write \(x\in\R^{n-1}\) for the last column, and set \(M(B)=B^2-\diag(B^2)\). Then
\[
  Q_n(A)
  =
  Q_{n-1}(B)
  +\frac12 x^\top M(B)x
  -\frac1{12}\sum_{a=1}^{n-1}x_a^4.
\]
\end{lemma}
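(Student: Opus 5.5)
The plan is to start from the explicit formula~\eqref{eq:Q} of Lemma~\ref{lem:ordered-cycle} and sort every monomial according to whether it involves vertex \(n\). The quartic diagonal part splits immediately:
\[
\sum_{e}\lambda_e^4=\sum_{e\subseteq[n-1]}\lambda_e^4+\sum_{a=1}^{n-1}x_a^4,
\]
since the edges through \(n\) are exactly \(\{a,n\}\) with \(\lambda_{\{a,n\}}=x_a\). With the coefficient \(-\tfrac1{12}\), the first sum gives the diagonal part of \(Q_{n-1}(B)\), and the second gives the term \(-\tfrac1{12}\sum_a x_a^4\) in the claimed identity.

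Next I will split the ordered \(4\)-cycle sum \(\mathcal C_4(A)\) in~\eqref{eq:C4} according to whether \(n\in\{i_1,i_2,i_3,i_4\}\). Tuples avoiding \(n\) reproduce \(\mathcal C_{4,n-1}(B)\) exactly. For tuples containing \(n\), vertex \(n\) occupies exactly one of the four positions. By cyclic symmetry of the summand, each of these four position choices contributes the same amount, namely the contribution with \(i_1=n\). That contribution is
\[
\sum_{\substack{i_2,i_3,i_4\in[n-1]\\ \text{distinct}}} x_{i_2}B_{i_2i_3}B_{i_3i_4}x_{i_4}.
\]

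I will then identify this sum with a quadratic form. Because \(B\) has zero diagonal, \(B_{i_2i_3}B_{i_3i_4}\) already vanishes when \(i_3=i_2\) or \(i_3=i_4\). The only distinctness constraint that remains is \(i_2\neq i_4\). Summing over \(i_3\) freely therefore gives \((B^2)_{i_2i_4}\) restricted to \(i_2\ne i_4\), which is \(M(B)_{i_2i_4}\). Hence the contribution equals \(x^\top M(B)x\), and the \(n\)-containing part of \(\mathcal C_4(A)\) is \(4x^\top M(B)x\). Multiplying by \(\tfrac18\) yields \(\tfrac12 x^\top M(B)x\).

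Adding the three pieces gives the identity. The only delicate point is the bookkeeping in the cyclic-symmetry step: I must check that the factor \(4\) counts the position of \(n\) without double counting, which holds because \(n\) appears exactly once in a tuple of distinct indices. I must also confirm that dropping the constraints \(i_3\ne i_2,i_4\) is harmless, which is exactly the zero-diagonal property of \(B\).
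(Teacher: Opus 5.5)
Your proposal is correct and follows the paper's proof. Both arguments split the diagonal quartic term and $\mathcal{C}_4$ according to whether vertex $n$ is involved, and both use the zero diagonal of $B$ to identify the $n$-containing cycles with $4x^\top M(B)x$. The only cosmetic difference is how the factor $4$ is obtained: you rotate $n$ into the first slot over its four positions, whereas the paper compares the multiplicities $8$ and $2$ of each undirected cycle in $\mathcal{C}_4$ and in $x^\top M(B)x$.
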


\begin{proof}
The diagonal quartic part splits as
\[
  \sum_{1\le i<j\le n}A_{ij}^4
  =
  \sum_{1\le a<b\le n-1}B_{ab}^4+\sum_{a=1}^{n-1}x_a^4.
\]
The cycles avoiding vertex \(n\) contribute exactly \(\mathcal{C}_{4,n-1}(B)\). Since \(M(B)=B^2-\diag(B^2)\), for \(a\ne c\) one has \(M(B)_{ac}=(B^2)_{ac}=\sum_{b}B_{ab}B_{bc}\), so \(x^\top M(B)x=\sum_{a\ne c}\sum_{b}x_aB_{ab}B_{bc}x_c\).
Because \(B\) has zero diagonal, only triples with \(a,b,c\) distinct contribute. Each monomial \(x_aB_{ab}B_{bc}x_c\) corresponds to the undirected \(4\)-cycle \(n\text{-}a\text{-}b\text{-}c\text{-}n\), which contributes twice to \(x^\top M(B)x\) (from \((a,b,c)\) and \((c,b,a)\)) and eight times to \(\mathcal{C}_{4,n}\) (two cyclic orientations, four starting points). Hence the total contribution of all ordered \(4\)-cycles through \(n\) is \(4x^\top M(B)x\), proving \(\mathcal{C}_{4,n}(B,x)=\mathcal{C}_{4,n-1}(B)+4x^\top M(B)x\).
Multiplying by \(1/8\) and combining with the diagonal quartic split gives the claimed identity.
\end{proof}

With the cumulant identities and the peeling structure in hand, we bound \(Q\) on \(\corereg\). The first estimate controls the \(L^2\) norm of \(e^{4tQ}\); the second shows that \(e^{4tQ}\) is close to~\(1\).

\begin{proposition}[Quartic bounds]\label{prop:quartic-bounds}
There exist absolute constants \(c_1,C_1>0\) such that if
\(d/t\le c_1\) and \(nd^2/t^2\le c_1\), then
\[
  \int_{\corereg}|e^{4tQ(\lambda)}|^2e^{-2t\|\lambda\|^2}\,d\lambda
  \le C_1\core(d,t),
\]
and
\[
  \int_{\corereg}|e^{4tQ(\lambda)}-1|^2e^{-2t\|\lambda\|^2}\,d\lambda
  \le C_1\frac{d^2}{t^2}\core(d,t).
\]
\end{proposition}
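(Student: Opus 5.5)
The plan is to pass to probabilistic language and prove both bounds by induction on the number of vertices, peeling one vertex at a time. Under the weight $e^{-2t\|\lambda\|^2}$, normalized by $F(d,t)$, the coordinates $\lambda_e$ are i.i.d.\ $N(0,1/(4t))$. By Lemma~\ref{lem:inner-core}, $\core(d,t)\ge \tfrac12 F(d,t)$ once $d$ is large. So it suffices to bound the integrals by $C\,F(d,t)$ and $C\,(d^2/t^2)F(d,t)$ respectively, with the integrals still restricted to $\corereg$.

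\textbf{First bound.} Since $Q$ is real, $|e^{4tQ}|^2=e^{8tQ}$. By Lemma~\ref{lem:ordered-cycle} the diagonal quartic part enters with a negative sign, so it can be dropped in an upper bound. Iterating Lemma~\ref{lem:peeling} vertex by vertex gives
\[
8tQ_n(A)\le 8tQ_{n-1}(B)+4t\,x^\top M(B)x .
\]
On $\corereg$ the submatrix satisfies $\|B\|_F^2\le \|\lambda\|^2\le d/t$. I would therefore bound $\mathbf 1_{\corereg}(\lambda)\le \mathbf 1\{\|B\|_F^2\le d/t\}$, which removes the coupling between $x$ and $B$, and then integrate $x$ over all of $\R^{n-1}$. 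This is an upper bound because the integrand is positive.

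The real case of Lemma~\ref{lem:gaussian-quadratic}, with $x\sim N(0,I/(4t))$, gives
\[
\E_x\!\left[e^{4t\,x^\top M x}\right]=\det(I-2M)^{-1/2}.
\]
Now $M=M(B)$ has zero diagonal, so $\operatorname{tr}M=0$. Its norms satisfy $\|M\|_{\mathrm{op}}\le 2\|B\|_F^2\le 2c_1<\tfrac14$ and $\|M\|_F\le \|B^2\|_F\le\|B\|_F^2\le d/t$. Expanding $-\tfrac12\log\det(I-2M)=\sum_k \tfrac{(2)^k}{2k}\operatorname{tr}M^k$, the $k=1$ term vanishes, and the tail is at most $C\|M\|_F^2\le C d^2/t^2$.

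Each peeling step therefore costs a factor $e^{Cd^2/t^2}$ and leaves an integral of the same form on $n-1$ vertices. The constraint only weakens, to $\|B\|_F^2\le d/t$, and since $d_{n-1}\le d$ the smallness conditions persist. After $n$ steps the total factor is $e^{Cnd^2/t^2}\le e^{Cc_1}$, which is the claimed bound.

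\textbf{Second bound.} I would use $|e^a-1|\le |a|(e^a+1)$ for real $a$, followed by Cauchy--Schwarz:
\[
\int_{\corereg}|e^{4tQ}-1|^2w\le \Big(\int (4tQ)^4 w\Big)^{1/2}\Big(\int_{\corereg}2(e^{16tQ}+1)\,w\Big)^{1/2}.
\]
The exponential factor is handled by the peeling argument above with $16$ in place of $8$. This doubles the constants, which is harmless if $c_1$ is taken small enough. For the quartic moment, extend the integral to all of $\R^d$ and apply Lemma~\ref{lem:hc} in the Gaussian setting, with degree $4$ and $p=4$. This reduces the task to showing $\|4tQ\|_{L^2}\lesssim d/t$.

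This $L^2$ estimate is a direct second-moment computation. The diagonal part $\tfrac t3\sum_e\lambda_e^4$ has $L^2$ norm of order $t\cdot d/t^2=d/t$. The cycle part $\tfrac t2\mathcal C_4$ has uncorrelated monomials, giving variance of order $t^2\cdot n^4\cdot t^{-4}\asymp d^2/t^2$. Hence $\int (4tQ)^4w\lesssim (d/t)^4F$, and the product above is $\lesssim (d/t)^2F$, as required.

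\textbf{Main obstacle.} The hard part is the peeling step. It requires decoupling the core constraint, using the tracelessness of $M(B)$ to kill the first-order term of $\log\det$, and checking that the per-step error is $O(d^2/t^2)$ uniformly along the induction, so that the $n$ accumulated factors stay bounded under $nd^2/t^2\le c_1$.
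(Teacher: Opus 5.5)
Your proposal is correct and follows the paper's proof essentially step for step. You use vertex peeling with the core constraint kept only on $B$, you control the Gaussian factor $\det(I-2M(B))^{-1/2}$ through $\operatorname{tr}M(B)=0$ and $\|M(B)\|_F\le\|B\|_F^2$, which gives a total factor $e^{Cnd^2/t^2}$, and for the second bound you use the same three ingredients as the paper: $\E_{\gamma_t}[Q^2]=O(d^2/t^4)$, hypercontractivity for $Q^4$, and the $\beta=16$ exponential bound. The differences are cosmetic: you apply a single Cauchy--Schwarz to $(4tQ)^2(1+e^{4tQ})^2$ rather than splitting off the $Q^2$ term, and one constant should be corrected, since $\|B\|_F^2=2s(B)\le 2d/t$ (not $d/t$); this changes only constants.
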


\begin{proof}
Let \(A\) be the symmetric zero-diagonal \(n\times n\) matrix with entries \(A_{ij}=\lambda_{\{i,j\}}\), and split off the last vertex:
\[
  A=
  \begin{pmatrix}
    B & x\\
    x^\top & 0
  \end{pmatrix}.
\]
Define
\[
  s(B)=\sum_{1\le a<b\le n-1}B_{ab}^2,
  \qquad
  M(B)=B^2-\diag(B^2).
\]
Since \(\|B\|_F^2=2s(B)\), submultiplicativity gives \(\|M(B)\|_{\mathrm{op}}\le 2s(B)\) and \(\operatorname{tr}(M(B)^2)\le 4s(B)^2\). On \(\corereg\), \(s(B)\le d/t\), so \(\|M(B)\|_{\mathrm{op}}\le 2d/t\) and \(\operatorname{tr}(M(B)^2)\le 4d^2/t^2\).
Since \(Q\) is real, \(|e^{4tQ}|^2=e^{8tQ}\). Fix \(\beta>0\) and set
\[
  I_n^{(\beta)}=\int_{\corereg}e^{-2t\|\lambda\|^2}e^{\beta tQ(\lambda)}\,d\lambda.
\]
By Lemma~\ref{lem:peeling}, \(Q_n(A)\) decomposes into \(Q_{n-1}(B)\) plus terms quadratic and quartic in~\(x\), so for fixed \(B\) with \(s(B)\le d/t\) the \(x\)-fiber contribution is bounded by
\begin{align*}
  &e^{-2ts(B)}e^{\beta tQ_{n-1}(B)}
  \int_{\R^{n-1}}
    \exp\!\left(
      -2t\|x\|^2+\frac{\beta t}{2}x^\top M(B)x-\frac{\beta t}{12}\sum_a x_a^4
    \right)\,dx\\
  &\le
  e^{-2ts(B)}e^{\beta tQ_{n-1}(B)}
  \int_{\R^{n-1}}
    \exp\!\left(
      -2t\|x\|^2+\frac{\beta t}{2}x^\top M(B)x
    \right)\,dx.
\end{align*}
If \(d/t\) is small enough that \(\|(\beta/4)M(B)\|_{\mathrm{op}}\le 1/2\), then \(I-(\beta/4)M(B)\) is positive definite, and Lemma~\ref{lem:gaussian-quadratic} gives
\[
  \int_{\R^{n-1}}
  \exp\!\left(
    -2t\|x\|^2+\frac{\beta t}{2}x^\top M(B)x
  \right)\,dx
  =
  \left(\frac{\pi}{2t}\right)^{(n-1)/2}
  \det\!\left(I-\frac{\beta}{4}M(B)\right)^{-1/2}.
\]
Let \(u_1,\dots,u_{n-1}\) be the eigenvalues of \((\beta/4)M(B)\). Since \(M(B)\) has zero diagonal, \(\sum_j u_j=(\beta/4)\operatorname{tr}M(B)=0\). Also \(|u_j|\le 1/2\), so \(-\log(1-u_j)\le u_j+Cu_j^2\). Therefore
\begin{align*}
  \det\!\left(I-\frac{\beta}{4}M(B)\right)^{-1/2}
  &=\exp\!\left(-\frac12\sum_j\log(1-u_j)\right)\\
  &\le \exp\!\left(C\sum_j u_j^2\right)
   = \exp\!\left(\frac{C\beta^2}{16}\operatorname{tr}(M(B)^2)\right)
  \le \exp\!\left(C_\beta\frac{d^2}{t^2}\right).
\end{align*}
Peeling off vertices one at a time, each step integrates out \(n{-}k\) variables and contributes a Gaussian factor \((\pi/(2t))^{(n-k)/2}\) and a determinant error \(\exp(C_\beta d^2/t^2)\). Since \(\sum_{k=1}^{n-1}(n{-}k)=d\), the accumulated Gaussian factors give \((\pi/(2t))^{d/2}\), and the \(n\) determinant errors multiply to yield \(I_n^{(\beta)}\le \exp(C_\beta nd^2/t^2)(\pi/(2t))^{d/2}\).
If \(nd^2/t^2\le c_1\) with \(c_1\) small enough, then \(I_n^{(\beta)}\le C_\beta F(d,t)\).
By Lemma~\ref{lem:inner-core}, \(F(d,t)\le C\,\core(d,t)\), so taking \(\beta=8\) proves the first estimate.

For the second estimate, \(|e^u-1|^2\le u^2(1+e^{2u})\) with \(u=4tQ(\lambda)\) gives
\begin{align*}
  \int_{\corereg}|e^{4tQ}-1|^2e^{-2t\|\lambda\|^2}\,d\lambda
  &\le 16t^2\int_{\corereg}Q(\lambda)^2e^{-2t\|\lambda\|^2}\,d\lambda\\
  &\quad+
  16t^2\int_{\corereg}Q(\lambda)^2e^{8tQ(\lambda)}e^{-2t\|\lambda\|^2}\,d\lambda.
\end{align*}

For the first term, pass to the centered Gaussian measure \(\gamma_t\) on \(\R^d\) with independent coordinates of variance \((4t)^{-1}\), so that \(\int_{\R^d}f(\lambda)e^{-2t\|\lambda\|^2}\,d\lambda=F(d,t)\E_{\gamma_t}[f(g)]\).
Write \(Q(g)=-\tfrac1{12}D(g)+\tfrac18 C(g)\) with \(D(g)=\sum_e g_e^4\) and \(C(g)=\mathcal{C}_4(g)\).
Then \(\E_{\gamma_t}[D(g)^2]=O(d^2/t^4)\). For the cross term, every monomial in \(D(g)C(g)=\sum_e g_e^4\sum_\omega \prod_{j=1}^4g_{e_j(\omega)}\) has at least one coordinate with odd exponent, so \(\E_{\gamma_t}[D(g)C(g)]=0\).
For \(C(g)^2\), expand into ordered pairs of \(4\)-cycles. A Gaussian moment survives only when every edge variable appears with even total degree, so a fixed ordered cycle has only \(O(1)\) partners that contribute. Since there are \(O(n^4)=O(d^2)\) ordered cycles and each surviving degree-\(8\) Gaussian moment is \(O(t^{-4})\), we get \(\E_{\gamma_t}[C(g)^2]=O(d^2/t^4)\).
Therefore \(\E_{\gamma_t}[Q(g)^2]=O(d^2/t^4)\), and hence
\[
  \int_{\corereg}Q(\lambda)^2e^{-2t\|\lambda\|^2}\,d\lambda
  \le C\frac{d^2}{t^4}\core(d,t).
\]

For the second term, Cauchy--Schwarz gives
\[
  \int_{\corereg}Q(\lambda)^2e^{8tQ(\lambda)}e^{-2t\|\lambda\|^2}\,d\lambda
  \le
  \left(\int_{\corereg}Q(\lambda)^4e^{-2t\|\lambda\|^2}\,d\lambda\right)^{1/2}
  \left(\int_{\corereg}e^{16tQ(\lambda)}e^{-2t\|\lambda\|^2}\,d\lambda\right)^{1/2}.
\]
The second factor is \(O(\core(d,t)^{1/2})\) by the already proved exponential estimate with \(\beta=16\). For the first factor, Lemma~\ref{lem:hc} applied to the degree-\(4\) Gaussian polynomial \(Q(g)\) with \(q=p=4\) yields
\[
  \E_{\gamma_t}[Q(g)^4]
  \le C\,\E_{\gamma_t}[Q(g)^2]^2
  \le C\frac{d^4}{t^8},
  \qquad\text{so}\qquad
  \int_{\corereg}Q(\lambda)^4e^{-2t\|\lambda\|^2}\,d\lambda
  \le C\frac{d^4}{t^8}\core(d,t).
\]
Combining gives \(\int_{\corereg}Q^2e^{8tQ}e^{-2t\|\lambda\|^2}\,d\lambda\le Cd^2t^{-4}\core(d,t)\).
Multiplying by \(16t^2\) proves the second estimate.
\end{proof}

The last cumulant-level ingredient bounds the fifth-order phase \(P\).

\begin{lemma}[Quintic bound]\label{lem:quintic}
There is an absolute constant \(C_3'>0\) such that
\[
  \int_{\corereg}P(\lambda)^2e^{-2t\|\lambda\|^2}\,d\lambda
  \le C_3'\frac{n^5}{t^5}\core(d,t).
\]
\end{lemma}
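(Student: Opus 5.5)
We compute \(P=\kappa_5/120\) explicitly, pass to the Gaussian measure \(\gamma_t\) exactly as in the proof of Proposition~\ref{prop:quartic-bounds}, and bound \(\E_{\gamma_t}[P(g)^2]\) by a moment count. Since the integrand is nonnegative, we may extend the integral from \(\corereg\) to \(\R^d\). That gives
\[
\int_{\corereg}P^2e^{-2t\|\lambda\|^2}\,d\lambda\le F(d,t)\,\E_{\gamma_t}[P(g)^2].
\]
Lemma~\ref{lem:inner-core} then converts \(F(d,t)\) into \(C\,\core(d,t)\). So it suffices to prove \(\E_{\gamma_t}[P(g)^2]=O(n^5/t^5)\), where the \(g_e\) are independent with variance \((4t)^{-1}\).

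\textbf{Structure of \(\kappa_5\).} We write
\[
\kappa_5=\E X^5-10\,\E X^3\,\E X^2,
\]
using that odd moments of order one vanish. We expand \(\E X_\lambda^5\) as in Lemmas~\ref{lem:triangle} and~\ref{lem:ordered-cycle}. The surviving terms are ordered \(5\)-tuples of edges whose multigraph has all vertex degrees even. With five edges, each such multigraph is either a triangle plus a doubled edge, or a \(5\)-cycle. A triangle with a doubled edge \(f\) (where \(f\) is either disjoint from the triangle or equal to one of its edges) contributes a monomial of the form \(\lambda_f^2\cdot(\text{triangle})\). Cumulant subtraction removes exactly the disconnected part: the term \(10\,\E X^3\,\E X^2\) equals \(60\,T\,s\), which cancels the triangle-times-disjoint-edge monomials with the combinatorial multiplicity \(\binom{5}{2}\cdot 3!=60\). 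What remains is
\[
P(\lambda)=a\sum_{\tau}\sum_{e\in\tau}\lambda_e^2\prod_{f\in\tau}\lambda_f+b\,\mathcal{C}_5(\lambda)
\]
for absolute constants \(a,b\), where \(\mathcal{C}_5\) is the sum over (ordered or unordered) \(5\)-cycles. The precise constants do not matter. The key point is that \(P\) is a sum of \(O(n^3)\) connected "triangle with a repeated edge" monomials and \(O(n^5)\) cycle monomials, each with an \(O(1)\) coefficient.

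\textbf{Second moment.} Each degree-\(5\) monomial squared gives a degree-\(10\) Gaussian moment of size \(O(t^{-5})\). For \(\E[P(g)^2]\) we expand into pairs of monomials. A pair contributes only if every edge variable appears with even total degree.
\begin{itemize}
\item \emph{Cycle--cycle pairs.} Two \(5\)-cycles whose edges, counted together, all have even multiplicity must be the same undirected cycle. Each cycle therefore has \(O(1)\) partners, giving \(O(n^5)\cdot O(t^{-5})\).
\item \emph{Triangle-type terms.} A triangle monomial \(\lambda_e^3\lambda_f\lambda_g\) pairs only with monomials on the same triangle. This gives \(O(n^3 t^{-5})\).
\item \emph{Cross terms.} A triangle-type monomial and a \(5\)-cycle monomial have mismatched edge sets, so every cross term vanishes.
\end{itemize}
Together these give \(\E[P^2]\le C n^5/t^5\).

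\textbf{Main obstacle.} The delicate step is verifying the cancellation of the disconnected terms, so that \(P\) contains no summand like \(T(\lambda)\,s(\lambda)\). Such a product would have second moment of order \(n^3 d/t^5=n^5/t^5\) times a constant, which is the same order and so would not break the bound. Even so, to be safe, we would either check the cumulant cancellation directly, or bound \(\E[(T s)^2]\) by \(n^3\cdot d^2\) moment counting, which gives \(O(n^7/t^5)\). That second bound is too large, so the cancellation genuinely needs to be confirmed. We would do this by the standard fact that cumulants are sums over connected diagrams: in the moment–cumulant expansion for the graph-indexed products \(\xi_i\xi_j\), the joint cumulants of the edge variables vanish unless the edges form a connected even multigraph. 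That fact yields the connected-only form of \(P\) above directly.
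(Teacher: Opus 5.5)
Your route is the paper's: extend the integral to $\R^d$, pass to $\gamma_t$, and count pairs of quintic monomials with equal odd support, with the $O(n^5)$ five-cycles dominating. The flaw is in your structure step.

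Your list of even $5$-edge configurations misses the case where the doubled edge meets the triangle in exactly one vertex, e.g.\ $\{ij,jk,ik,k\ell,k\ell\}$. The subtracted term $60\,Ts=60\sum_\tau\prod_{e\in\tau}\lambda_e\sum_h\lambda_h^2$ contains these monomials with coefficient $60$. Your derivation as written would therefore leave them in $\kappa_5$ with coefficient $-60$, so your ``what remains'' formula does not follow from your own enumeration. The formula is nevertheless correct. $\E X^5$ contains each such monomial $5!/2!=60$ times, so it cancels exactly. The tripled-edge monomials survive in $P$ with coefficient $(20-60)/120=-1/3$, and each undirected five-cycle with coefficient $1$.

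Your fallback does not repair the derivation, because this configuration is vertex-connected. The ``cumulants vanish on disconnected diagrams'' principle therefore does not kill it. Its joint cumulant vanishes for a different reason: $\xi_k\xi_\ell$ is independent of $(\xi_i,\xi_j,\xi_k)$. If you rely only on connectivity, you must count these terms as the paper does. There are $O(n^4)$ of them, and each pairs only with the $O(n)$ monomials sharing its triangle as odd support. That gives $O(n^5)$ nonzero pairs, still within the bound. With either fix your second-moment count goes through. Your explicit formula even gives the sharper $O(n^3/t^5)$ for the triangle-type part, versus $O(n^5/t^5)$ from the paper's cruder count.

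Your obstacle paragraph also contains a slip. $s$ concentrates near $d/(4t)$, so $\E_{\gamma_t}[(Ts)^2]\asymp n^3d^2/t^5\asymp n^7/t^5$, not $n^5/t^5$. That is exactly why the cancellation of the disjoint-edge terms is essential, as you go on to conclude.
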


\begin{proof}
By definition, \(\kappa_5(\lambda)=\E[X_\lambda^5]-10\E[X_\lambda^3]\E[X_\lambda^2]\). The disconnected even networks in \(\E[X_\lambda^5]\) are exactly products of a triangle and a double edge, removed by the subtraction. Thus \(P\) is supported on connected even \(5\)-edge networks.

A connected even multigraph with \(5\) edges has sum of degrees~\(10\) and minimum degree~\(2\), so at most~\(5\) vertices. Choosing at most~\(5\) vertices from~\([n]\) gives \(O(n^5)\) vertex sets; for each, the number of connected even multigraphs with~\(5\) edges on those vertices is bounded by an absolute constant (distribute~\(5\) among at most~\(\binom{5}{2}=10\) edge slots). Therefore \(P\) is a universal linear combination of \(O(n^5)\) monomials.

Under the Gaussian measure \(\gamma_t\) of variance \((4t)^{-1}\), each monomial in \(P\) has degree~\(5\). Fix one such monomial \(M(g)=\prod_e g_e^{a_e}\) with \(\sum_e a_e=5\), and let \(\mathcal O(M)=\{e:a_e\text{ is odd}\}\).
Because the underlying multigraph is even, \(\mathcal O(M)\) is an even simple graph with an odd number of edges; since the total degree is~\(5\), the only possibilities are a triangle or a \(5\)-cycle.

We first describe the two types of monomials~\(M\).
If \(\mathcal O(M)\) is a \emph{\(5\)-cycle}, all five edges carry odd exponent, forcing \(a_e=1\) for each (since \(\sum a_e=5\)); the monomial is fully determined by the cycle.
If \(\mathcal O(M)\) is a \emph{triangle} on~\(\{i,j,k\}\), three edges carry odd exponent (using at least~\(3\) of the~\(5\) slots), and the remaining two slots form a doubled edge.
This doubled edge either coincides with one of the three triangle edges, giving multiplicity pattern~\((3,1,1)\) on the triangle (\(3\)~choices), or joins a triangle vertex to a new vertex~\(\ell\), giving pattern~\((1,1,1,2)\) (\(O(n)\)~choices for~\(\ell\)); see Figure~\ref{fig:triangle-partners}.

Now we count partners. Let \(M'(g)=\prod_e g_e^{b_e}\) be another quintic monomial. If \(\E_{\gamma_t}[M(g)M'(g)]\ne 0\), then \(a_e+b_e\equiv 0\pmod 2\) for every~\(e\), so \(\mathcal O(M')=\mathcal O(M)\).
For a \(5\)-cycle, \(M\) is unique, so the only partner is \(M'=M\).
For a triangle on~\(\{i,j,k\}\), any partner~\(M'\) must share the same odd support and is therefore also of type~\((3,1,1)\) or~\((1,1,1,2)\) on that triangle: \(3 + O(n) = O(n)\) partners.
Since there are \(O(n^5)\) cycle-type monomials and \(O(n^4)\) triangle-type monomials, the total number of nonzero pairs is \(O(n^5)\).

\begin{figure}[ht]
\centering
\begin{tikzpicture}[
  vtx/.style={circle, fill=black, inner sep=1.8pt},
  edge/.style={line width=0.7pt},
  oddedge/.style={line width=0.7pt, blue!70},
  doubledge/.style={line width=0.7pt, orange!80!black},
]

\begin{scope}[shift={(-5,0)}]
  \node[font=\small\bfseries] at (0.65, 2) {Odd support \(\mathcal O(M)\)};
  \node[vtx, label={[font=\scriptsize]above:$i$}] (i) at (0, 1.2) {};
  \node[vtx, label={[font=\scriptsize]below left:$j$}] (j) at (-0.7, 0) {};
  \node[vtx, label={[font=\scriptsize]below right:$k$}] (k) at (1.3, 0) {};
  \draw[oddedge] (i) -- (j) node[midway, left, font=\tiny, blue!70]{odd};
  \draw[oddedge] (j) -- (k) node[midway, below, font=\tiny, blue!70]{odd};
  \draw[oddedge] (i) -- (k) node[midway, right, font=\tiny, blue!70]{odd};
\end{scope}

\begin{scope}[shift={(0,0)}]
  \node[font=\small\bfseries] at (0.65, 2) {Partner type \((3,1,1)\)};
  \node[vtx, label={[font=\scriptsize]above:$i$}] (i2) at (0, 1.2) {};
  \node[vtx, label={[font=\scriptsize]below left:$j$}] (j2) at (-0.7, 0) {};
  \node[vtx, label={[font=\scriptsize]below right:$k$}] (k2) at (1.3, 0) {};
  \draw[oddedge] (i2) -- (j2);
  \draw[oddedge] (i2) -- (k2);
  \draw[oddedge] (j2) -- (k2);
  \draw[doubledge] (j2) to[bend right=18] (k2);
  \draw[doubledge] (j2) to[bend left=18] (k2);
  \node[font=\tiny, orange!80!black] at (0.3, -0.55) {$+2$};
\end{scope}

\begin{scope}[shift={(5,0)}]
  \node[font=\small\bfseries] at (0.65, 2) {Partner type \((1,1,1,2)\)};
  \node[vtx, label={[font=\scriptsize]above:$i$}] (i3) at (0, 1.2) {};
  \node[vtx, label={[font=\scriptsize]below left:$j$}] (j3) at (-0.7, 0) {};
  \node[vtx, label={[font=\scriptsize]below right:$k$}] (k3) at (1.3, 0) {};
  \node[vtx, label={[font=\scriptsize]right:$\ell$}] (l3) at (2.3, 0.6) {};
  \draw[oddedge] (i3) -- (j3);
  \draw[oddedge] (j3) -- (k3);
  \draw[oddedge] (i3) -- (k3);
  \draw[doubledge] (k3) to[bend left=12] (l3);
  \draw[doubledge] (k3) to[bend right=12] (l3);
  \node[font=\tiny, orange!80!black] at (2.2, -0.05) {$+2$};
  \node[font=\tiny, orange!80!black] at (2.75, 0.9) {\(O(n)\) choices};
\end{scope}

\end{tikzpicture}
\caption{Partner counting when \(\mathcal O(M)\) is a triangle on~\(\{i,j,k\}\).
The three blue edges have odd exponent (fixed by~\(\mathcal O\)); the two orange edge-slots carry the remaining multiplicity.
Left: the shared odd support.
Middle: the doubled edge lands on a triangle edge (\(3\) choices).
Right: the doubled edge goes to a new vertex~\(\ell\) (\(O(n)\) choices).}
\label{fig:triangle-partners}
\end{figure}
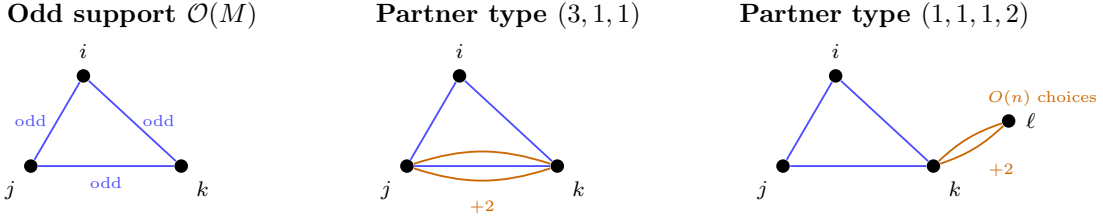

Each surviving product \(M(g)M'(g)=\prod_e g_e^{m_e}\) has \(m_e\in 2\mathbb N_0\) and \(\sum_e m_e=10\). Writing \(g_e=(4t)^{-1/2}Z_e\) with \(Z_e\sim N(0,1)\), we get \(\E_{\gamma_t}[M(g)M'(g)]=(4t)^{-5}\E[\prod_e Z_e^{m_e}]=O(t^{-5})\), since the remaining standard Gaussian moment is bounded by an absolute constant. Multiplying by \(F(d,t)\) and using Lemma~\ref{lem:inner-core} proves the claim.
\end{proof}

\subsection{Expansion of the log-characteristic function}

The estimates in Section~\ref{sec:core} require a factorization of \(\psi(\lambda)^{4t}\) into Gaussian, cubic, quartic, quintic, and remainder terms. Lemma~\ref{lem:sixth} provides this by expanding \(\log\psi(\lambda)\) to fifth order with a sixth-order remainder on \(\{s(\lambda)\le c_2\}\). Lemma~\ref{lem:small-ball} extracts the Gaussian decay \(|\psi(\lambda)|^{4t}\le e^{-3ts(\lambda)/2}\) on a small ball, which controls both the off-core and residual regions.

\begin{lemma}[Log-expansion]\label{lem:sixth}
There are absolute constants \(c_2,C_2>0\) such that whenever \(s(\lambda)\le c_2\),
\[
  \log\psi(\lambda)
  =-\frac12 s(\lambda)-iT(\lambda)+Q(\lambda)+iP(\lambda)+E_6(\lambda)
\]
with \(|E_6(\lambda)|\le C_2 s(\lambda)^3\).
\end{lemma}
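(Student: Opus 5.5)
The plan is to view \(\phi(\theta):=\log\psi(\theta\lambda)\) as a function of the scalar \(\theta\in[0,1]\) and Taylor-expand it to order five at \(\theta=0\), with an integral-form remainder of order six. Lemma~\ref{lem:realpart} makes this well defined. If \(s(\lambda)\le c_2\le 1/2\), then \(s(\theta\lambda)=\theta^2 s(\lambda)\le 1/2\) for every \(\theta\in[0,1]\), so \(\RePart\psi(\theta\lambda)\ge 3/4\) along the whole segment. Hence the principal branch of \(\log\psi(\theta\lambda)\) is smooth in \(\theta\), agrees with the cumulant generating function near \(0\), and satisfies \(|\psi(\theta\lambda)|\ge 3/4\).

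Since \(X_{\theta\lambda}=\theta X_\lambda\), the derivatives at \(0\) are \(\phi^{(r)}(0)=i^r\kappa_r(\lambda)\).

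\emph{Low-order terms.} Because \(\E X_\lambda=0\), the first-order term vanishes. The second-order term is \(-\kappa_2/2=-\tfrac12 s(\lambda)\), since \(\E X_\lambda^2=\sum_e\lambda_e^2\): the products \(\chi_e\chi_f\) are orthonormal. The third-order term is \(i^3\kappa_3/6=-iT\). The fourth-order term is \(i^4\kappa_4/24=Q\). The fifth-order term is \(i^5\kappa_5/120=iP\). This gives the main part of the formula directly from the definitions of \(T,Q,P\).

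\emph{Remainder.} By Taylor's theorem with integral remainder,
\[
E_6(\lambda)=\int_0^1\frac{(1-\theta)^5}{5!}\,\phi^{(6)}(\theta)\,d\theta,
\]
so it suffices to show \(\sup_{\theta\in[0,1]}|\phi^{(6)}(\theta)|\le C\,s(\lambda)^3\).

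Write \(\psi_\theta=\psi(\theta\lambda)\). Its derivatives are \(\psi_\theta^{(k)}=i^k\E[X_\lambda^k e^{i\theta X_\lambda}]\), so \(|\psi_\theta^{(k)}|\le \E|X_\lambda|^k\). The sixth derivative of \(\log\psi_\theta\) is, by Faà di Bruno, a universal polynomial in the ratios \(\psi^{(k)}_\theta/\psi_\theta\). Each monomial has the form \(\prod_j \psi^{(k_j)}/\psi^{m}\) with \(\sum k_j=6\). Using \(|\psi_\theta|\ge 3/4\), each monomial is bounded by \(C\prod_j\E|X_\lambda|^{k_j}\).

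The moments are controlled by hypercontractivity. \(X_\lambda\) is a degree-2 Rademacher chaos, so Lemma~\ref{lem:hc} gives
\[
\E|X_\lambda|^k\le (k-1)^{k}\,\|X_\lambda\|_{L^2}^k=C_k\,s(\lambda)^{k/2}.
\]
Each product therefore contributes at most \(C\,s(\lambda)^{\sum k_j/2}=C\,s(\lambda)^3\), which gives \(|E_6|\le C_2 s^3\).

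The main obstacle I expect is bookkeeping rather than substance. The branch and nonvanishing issue is settled by Lemma~\ref{lem:realpart}. The other point to check is that the moment bounds for \(X_\lambda\) have the right homogeneity and no dependence on \(n\), which Lemma~\ref{lem:hc} supplies because \(X_\lambda\) has degree \(2\) independently of \(n\).
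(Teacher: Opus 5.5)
Your proposal is correct and follows the paper's proof essentially step for step. Both arguments take the principal branch along the segment via Lemma~\ref{lem:realpart}, apply Taylor's theorem with integral remainder to \(\theta\mapsto\log\psi(\theta\lambda)\), identify the coefficients as \(i^r\kappa_r/r!\), and bound the sixth derivative by a Fa\`a di Bruno expansion using \(|\psi|\ge 3/4\) and hypercontractivity. The paper applies Lemma~\ref{lem:hc} once with \(p=6\) and uses Jensen for the lower moments, whereas you apply it to each moment. One small slip: your constant \((k-1)^k\) vanishes at \(k=1\), and Lemma~\ref{lem:hc} requires \(p\ge 2\), so for \(k=1\) use \(\E|X_\lambda|\le\|X_\lambda\|_{L^2}=s(\lambda)^{1/2}\) instead.
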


\begin{proof}
Set \(m(u)=\E[e^{iuX_\lambda}]\) and \(K(u)=\log m(u)\) for \(0\le u\le 1\).
By Lemma~\ref{lem:hc} applied to the degree-\(2\) Rademacher polynomial \(X_\lambda\)
with \((q,p)=(2,6)\), one has \(\E[|X_\lambda|^6]\le C\,\E[X_\lambda^2]^3=C\,s(\lambda)^3\).
Choose \(c_2\le 1/2\). If \(s(\lambda)\le c_2\), then Lemma~\ref{lem:realpart} applied to \(u\lambda\) gives
\(\RePart m(u)=\RePart\psi(u\lambda)\in[3/4,1]\) for \(0\le u\le 1\).
Hence the principal branch of \(\log\) is well-defined along \(m([0,1])\), and \(|m(u)|\ge 3/4\) on \([0,1]\).

Taylor's theorem with integral remainder gives
\[
  K(1)=\sum_{r=0}^5\frac{K^{(r)}(0)}{r!}+\frac1{5!}\int_0^1(1-u)^5K^{(6)}(u)\,du.
\]
Since \(m(0)=1\) and \(m'(0)=i\E[X_\lambda]=0\),
\(K(0)=K'(0)=0\). Also \(K^{(r)}(0)=i^r\kappa_r(\lambda)\) for \(r\ge 2\). For
\(1\le j\le 6\), one has \(m^{(j)}(u)=i^j\E[X_\lambda^j e^{iuX_\lambda}]\) and
\(|m^{(j)}(u)|\le \E[|X_\lambda|^j]\le \E[|X_\lambda|^6]^{j/6}\) by Jensen.
The sixth derivative of \(K=\log m\) is a finite linear combination of terms
\[
  m(u)^{-k}\prod_{\ell=1}^k m^{(j_\ell)}(u),
  \qquad
  j_1+\cdots+j_k=6.
\]
Since \(|m(u)|\ge 3/4\), each factor \(m(u)^{-k}\) is \(O(1)\), so \(|K^{(6)}(u)|\le C\,\E[|X_\lambda|^6]\le C\,s(\lambda)^3\).
Therefore
\[
  K(1)=\sum_{r=2}^5\frac{i^r}{r!}\kappa_r(\lambda)+O(s(\lambda)^3).
\]
Now \(\kappa_2(\lambda)=s(\lambda)\), \(\kappa_3(\lambda)=6T(\lambda)\),
\(\kappa_4(\lambda)/24=Q(\lambda)\), and \(\kappa_5(\lambda)/120=P(\lambda)\). Substituting these identities proves the claim.
\end{proof}

On the small ball, the quartic and higher corrections are absorbed by the Gaussian decay.

\begin{lemma}[Small-ball decay]\label{lem:small-ball}
There exists an absolute constant \(r_0\in(0,\pi/4)\) such that
\(|\psi(\lambda)|^{4t}\le e^{-3t\|\lambda\|^2/2}\) for every integer \(t\ge 1\)
and every \(\lambda\in\Dd_{r_0}\).
\end{lemma}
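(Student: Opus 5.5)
We obtain the bound by taking real parts in the log-expansion of Lemma~\ref{lem:sixth}. We choose \(r_0\le\min\{\sqrt{c_2},\,\pi/8\}\), so that on \(\Dd_{r_0}\) we have \(s(\lambda)=\|\lambda\|^2\le c_2\) and the expansion
\[
  \log\psi(\lambda)=-\tfrac12 s(\lambda)-iT(\lambda)+Q(\lambda)+iP(\lambda)+E_6(\lambda)
\]
holds with the principal branch. Since \(T\), \(Q\), \(P\) are real polynomials, taking real parts gives
\[
  \log|\psi(\lambda)|=-\tfrac12 s(\lambda)+Q(\lambda)+\RePart E_6(\lambda),
  \qquad |E_6(\lambda)|\le C_2 s(\lambda)^3 .
\]
It therefore suffices to show that \(|Q(\lambda)|\le C s(\lambda)^2\) with an absolute constant \(C\). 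Granted this, we get \(\log|\psi(\lambda)|\le -\tfrac12 s+Cs^2+C_2s^3\le -\tfrac38 s\) once \(Cs+C_2s^2\le \tfrac18\), which holds after shrinking \(r_0\) further. Raising to the power \(4t\) then gives \(|\psi(\lambda)|^{4t}\le e^{-3ts/2}\), as claimed, uniformly in \(t\ge1\).

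The only real step is the bound on \(Q\). Here we must avoid any \(n\)-dependence, so we cannot count monomials. We use the formula of Lemma~\ref{lem:ordered-cycle}.
\begin{itemize}
\item The diagonal part is immediate: \(\sum_e\lambda_e^4\le(\sum_e\lambda_e^2)^2=s^2\).
\item For the cycle part, let \(A\) be the symmetric zero-diagonal matrix with \(A_{ij}=\lambda_{\{i,j\}}\). Then \(\operatorname{tr}(A^4)\) counts closed walks of length four. These are the ordered \(4\)-cycles on distinct vertices plus degenerate walks that revisit a vertex.
\item The degenerate walks contribute \(\sum_i\bigl(\sum_j A_{ij}^2\bigr)^2\) corrections, which are bounded by \(\|A\|_F^4\). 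So \(|\mathcal C_4(\lambda)|\le \operatorname{tr}(A^4)+2\sum_i(A^2)_{ii}^2\le C\|A\|_F^4=4C s^2\).
\end{itemize}
Hence \(|Q|\le C s^2\) with \(C\) absolute.

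As an alternative to the trace identity, hypercontractivity (Lemma~\ref{lem:hc}) gives \(|\kappa_4|\le \E X_\lambda^4+3s^2\le (9+3)s^2\). This is the cleanest route, and we would use it instead.

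The main obstacle is ensuring that no constant depends on \(n\); both routes above achieve this. The only remaining check is that \(r_0<\pi/4\), which holds by construction.
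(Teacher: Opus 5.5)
Your proof is correct and has the same skeleton as the paper's. You take real parts in Lemma~\ref{lem:sixth}, reduce to \(|Q(\lambda)|\le C\,s(\lambda)^2\) with \(C\) independent of \(n\), and then shrink \(r_0\) so that \(Cs+C_2s^2\le 1/8\). The only difference is how you bound \(Q\).

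\textbf{Trace route.} The paper builds \(A\) from the absolute values \(|\lambda_{\{i,j\}}|\). Then \(|\mathcal{C}_4(\lambda)|\) is dominated by the full closed-walk sum, \(\operatorname{tr}(A^4)\le\operatorname{tr}(A^2)^2=4s^2\), with no degenerate-walk correction at all. Your signed version also works, but the inclusion--exclusion is slightly incomplete. It actually gives
\[
\mathcal{C}_4=\operatorname{tr}(A^4)-2\sum_i\bigl((A^2)_{ii}\bigr)^2+\sum_{i,j}A_{ij}^4 ,
\]
so there is an overlap term you did not mention. It is harmless, because it is at most \(\sum_i((A^2)_{ii})^2\), and your stated inequality \(|\mathcal{C}_4|\le\operatorname{tr}(A^4)+2\sum_i((A^2)_{ii})^2\) remains true.

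\textbf{Hypercontractive route.} Your preferred route is a genuine shortcut. It bounds \(\kappa_4\) directly and bypasses the cycle formula of Lemma~\ref{lem:ordered-cycle}, exactly as the paper bounds \(\E|X_\lambda|^6\) inside Lemma~\ref{lem:sixth}. The cost is a worse, though still absolute, constant; the trace argument gives the explicit \(|Q|\le\tfrac{7}{12}s^2\). There is one slip in the constant. Lemma~\ref{lem:hc} with \((q,p)=(2,4)\) gives \(\|X_\lambda\|_{L^4}\le 3\|X_\lambda\|_{L^2}\), hence \(\E X_\lambda^4\le 81\,s^2\), not \(9s^2\). This changes nothing, since only the \(n\)-independence of the constant matters.
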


\begin{proof}
By Lemma~\ref{lem:sixth}, one has
\(\RePart\log\psi(\lambda)=-\frac12 s(\lambda)+Q(\lambda)+\RePart E_6(\lambda)\)
whenever \(s(\lambda)\le c_2\). It therefore suffices to show
\(Q(\lambda)=O(s(\lambda)^2)\). The diagonal quartic term satisfies
\(\sum_e \lambda_e^4\le s(\lambda)^2\).
For the ordered \(4\)-cycle term, let \(A\) be the symmetric zero-diagonal matrix with entries \(A_{ij}=|\lambda_{\{i,j\}}|\). Then
\[
  |\mathcal{C}_4(\lambda)|
  \le \sum_{i_1,i_2,i_3,i_4}
    A_{i_1i_2}A_{i_2i_3}A_{i_3i_4}A_{i_4i_1}
  = \operatorname{tr}(A^4)
  \le \operatorname{tr}(A^2)^2
  = 4s(\lambda)^2.
\]
Hence \(|Q(\lambda)|\le C s(\lambda)^2\).
Combined with \(|E_6(\lambda)|\le C s(\lambda)^3\) from Lemma~\ref{lem:sixth}, this gives \(\RePart\log\psi(\lambda)\le -\tfrac12 s(\lambda)+C s(\lambda)^2 + C s(\lambda)^3\) on \(\{s(\lambda)\le c_2\}\).
Choose \(r_0\in(0,\pi/4)\) so small that \(r_0^2\le c_2\), \(r_0\le 1\), and \(C r_0^2 + C r_0^4 \le 1/8\). If \(\lambda\in\Dd_{r_0}\), then \(s(\lambda)\le r_0^2\), so
\[
  C s(\lambda)^2 + C s(\lambda)^3
  \le \bigl(Cr_0^2 + Cr_0^4\bigr)s(\lambda)
  \le \tfrac18 s(\lambda),
\]
hence \(\RePart\log\psi(\lambda)\le -3s(\lambda)/8\) and \(|\psi(\lambda)|^{4t}\le e^{-3t s(\lambda)/2}\).
\end{proof}

\subsection{Lower bound on the full core}

With the factorization of \(\psi(\lambda)^{4t}\) from Lemma~\ref{lem:sixth} in hand, we evaluate the core integral by peeling off the higher-order factors one at a time. The cubic phase \(e^{-4itT}\) is the only factor that contributes a non-negligible correction: its oscillation reduces the real part by \(\binom{n}{3}/(8t)\) (Lemma~\ref{lem:cubic}). The quartic, quintic, and sixth-order terms are all absorbed into the error via the bounds above. Proposition~\ref{prop:primary-box} combines these to give the core estimate.

\begin{lemma}[Cubic phase bound]\label{lem:cubic}
For \(t\ge n^3\),
\[
  \RePart K_{\mathrm{core}}(t)
  =
  \left(1-\frac{\binom{n}{3}}{8t}+O\!\left(\frac{n^6}{t^2}+e^{-cd}\right)\right)F(d,t),
\]
where \(K_{\mathrm{core}}(t):=\int_{\corereg}e^{-2t\|\lambda\|^2}e^{-4itT(\lambda)}\,d\lambda\) and \(c>0\) is an absolute constant.
\end{lemma}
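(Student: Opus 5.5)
By central symmetry of \(\corereg\) and oddness of \(T\), the imaginary part of \(K_{\mathrm{core}}(t)\) cancels, so
\[
  \RePart K_{\mathrm{core}}(t)=\int_{\corereg}e^{-2t\|\lambda\|^2}\cos(4tT(\lambda))\,d\lambda .
\]
I will use the two-sided Taylor bound \(1-x^2/2\le\cos x\le 1-x^2/2+x^4/24\) with \(x=4tT\). This gives
\[
  \RePart K_{\mathrm{core}}(t)=\core(d,t)-8t^2\int_{\corereg}T^2e^{-2t\|\lambda\|^2}\,d\lambda+O\!\Bigl(t^4\int_{\corereg}T^4e^{-2t\|\lambda\|^2}\,d\lambda\Bigr),
\]
where the error term is nonnegative and bounded by \(\tfrac{256}{24}t^4\int T^4 e^{-2t\|\lambda\|^2}\). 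Lemma~\ref{lem:inner-core} gives \(\core(d,t)=(1-O(e^{-c_*d}))F(d,t)\). The remaining work is to evaluate the second moment of \(T\) exactly and bound the fourth moment, both under the Gaussian weight.

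\textbf{Second moment.} Pass to the Gaussian measure \(\gamma_t\) with independent coordinates of variance \((4t)^{-1}\), as in the proof of Proposition~\ref{prop:quartic-bounds}. Then \(\int_{\R^d}T^2e^{-2t\|\lambda\|^2}=F(d,t)\,\E_{\gamma_t}[T(g)^2]\). Since \(T=\sum_\tau\prod_{e\in\tau}g_e\) and distinct triangles have distinct edge sets, every cross term has some edge with odd exponent and vanishes. Each diagonal term contributes \(\prod_{e\in\tau}\E g_e^2=(4t)^{-3}\). Hence \(\E_{\gamma_t}[T^2]=\binom n3/(64t^3)\), and multiplying by \(8t^2\) gives exactly \(\binom n3/(8t)\cdot F(d,t)\), which is the claimed correction.

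The outside-core part \(8t^2\int_{\corereg^c}T^2e^{-2t\|\lambda\|^2}\) must also be controlled. I will use Cauchy--Schwarz: \(\int_{\corereg^c}T^2 e^{-2t\|\lambda\|^2}\le F\cdot\E[T^4]^{1/2}\,\gamma_t(\corereg^c)^{1/2}\). Hypercontractivity (Lemma~\ref{lem:hc}, degree \(3\), \(p=4\)) gives \(\E[T^4]\le 3^6\E[T^2]^2\). The complement probability is \(\gamma_t(\|g\|^2>d/t)\le e^{-cd}\) by the same Chernoff computation as in Lemma~\ref{lem:inner-core}. This contributes \(O(n^3/t\cdot e^{-cd})F\), which is absorbed into \(O(e^{-cd})F\).

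\textbf{Fourth moment.} Bounding the positive quartic error over \(\corereg\) by its value over all of \(\R^d\), hypercontractivity again gives \(t^4\E[T^4]\le C\,t^4\E[T^2]^2=C\bigl(t^2\cdot n^3/t^3\bigr)^2=O(n^6/t^2)\). Collecting the three pieces yields
\[
  \RePart K_{\mathrm{core}}=\Bigl(1-\tfrac{\binom n3}{8t}+O\bigl(\tfrac{n^6}{t^2}+e^{-cd}\bigr)\Bigr)F(d,t),
\]
where the product of \(e^{-c_*d}\) with \(\binom n3/t\le 1\) (valid because \(t\ge n^3\)) is again absorbed into \(e^{-cd}\). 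No step is seriously hard. The only points needing care are the exact orthogonality of distinct triangle monomials, which holds because a triangle's edge set determines it, and the use of hypercontractivity to avoid a combinatorial count of fourth moments. The latter is where the \(n^6/t^2\) error originates.
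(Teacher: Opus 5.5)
Your proposal is correct and matches the paper's proof nearly step for step: antisymmetry removes the imaginary part, orthogonality of triangle monomials makes the quadratic Taylor term exactly $\binom{n}{3}/(8t)$, and hypercontractivity ($q=3$, $p=4$) bounds the quartic remainder by $O(n^6/t^2)$. The one small difference is how you restrict the second moment to $\corereg$. You compute it on all of $\R^d$ and control the nonnegative tail with a single global Cauchy--Schwarz plus hypercontractivity, whereas the paper uses sign-flip invariance to drop cross terms on each region and then applies Cauchy--Schwarz triangle by triangle; both give an $O(n^3t^{-1}e^{-cd})F(d,t)$ error, and your route is slightly cleaner.
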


\begin{proof}
Because \(\corereg\) is centrally symmetric and \(T(-\lambda)=-T(\lambda)\), the imaginary part of \(K_{\mathrm{core}}(t)\) vanishes. Using \(\cos u=1-u^2/2+O(u^4)\):
\[
  \RePart K_{\mathrm{core}}=\core(d,t)-8t^2 I_2(t)+O(t^4 I_4(t)),
\]
where \(I_k(t):=\int_{\corereg}T(\lambda)^k e^{-2t\|\lambda\|^2}\,d\lambda\).
By Lemma~\ref{lem:triangle}, \(T(\lambda)=\sum_\tau\prod_{e\in\tau}\lambda_e\), summed over the \(\binom n3\) triangles of \(K_n\). Since the coordinates are independent under \(\gamma_t\) and each triangle monomial involves three distinct edges, cross terms with distinct triangle supports vanish:
\[
  \E_{\gamma_t}[T^2]
  =\binom{n}{3}\,\E_{\gamma_t}[\lambda_{e_1}^2]\,\E_{\gamma_t}[\lambda_{e_2}^2]\,\E_{\gamma_t}[\lambda_{e_3}^2]
  =\binom{n}{3}(4t)^{-3}.
\]
Therefore \(8t^2\,\E_{\gamma_t}[T^2]\,F(d,t)=\frac{\binom{n}{3}}{8t}\,F(d,t)\).
Since only diagonal terms survive, and both \(\corereg\) and \(\R^d\setminus\corereg\) are invariant under coordinate sign flips, the off-diagonal cross terms vanish on each region separately, so the restriction from \(\R^d\) to \(\corereg\) can be done term by term. For each triangle \(\tau=\{e_1,e_2,e_3\}\), Cauchy--Schwarz and Lemma~\ref{lem:inner-core} give
\[
  \int_{\R^d\setminus\corereg}\!\lambda_{e_1}^2\lambda_{e_2}^2\lambda_{e_3}^2\,e^{-2t\|\lambda\|^2}\,d\lambda
  \le
  \bigl(\E_{\gamma_t}[\lambda_{e_1}^4\lambda_{e_2}^4\lambda_{e_3}^4]\bigr)^{1/2}\!
  \cdot O(e^{-c_*d/2})\,F(d,t)
  =O(e^{-cd})\,(4t)^{-3}F(d,t),
\]
where the Gaussian fourth moments are computed coordinate by coordinate. Summing over \(\binom{n}{3}\) triangles:
\(8t^2 I_2=\frac{\binom{n}{3}}{8t}\,F(d,t)+O(e^{-cd})\,F(d,t)\).
For \(I_4\), hypercontractivity (Lemma~\ref{lem:hc}, \(q=3,p=4\)) gives
\(\E_{\gamma_t}[T^4]\le C(\E_{\gamma_t}[T^2])^2=C\binom{n}{3}^2(4t)^{-6}\).
Since \(T^4\ge 0\), the core integral is bounded by the full Gaussian expectation: \(I_4\le \E_{\gamma_t}[T^4]\,F(d,t)\), so \(t^4 I_4\le Cn^6 t^{-2}\,F(d,t)\).
Substituting into the expansion for \(\RePart K_{\mathrm{core}}\) and using \(\core(d,t)=F(d,t)(1+O(e^{-c_*d}))\) (Lemma~\ref{lem:inner-core}) yields the claim.
\end{proof}

The core main term now follows by successively removing \(Q\), \(P\), and \(E_6\) from \(\psi(\lambda)^{4t}\) using the bounds above.

\begin{proposition}[Primary estimate]\label{prop:primary-box}
Set \(K_n:=2^{2d-n+1}(2\pi)^{-d}\).
For all sufficiently large \(n\) and \(t\ge C_0n^3\),
\[
  K_n\RePart\int_{\corereg}\psi(\lambda)^{4t}\,d\lambda
  =\left(1-\frac{\binom{n}{3}}{8t}
  +O\!\left(\frac{n^2}{t}+\frac{n^{5/2}}{t^{3/2}}+\frac{n^6}{t^2}+e^{-cd}\right)\right)\Ahat.
\]
\end{proposition}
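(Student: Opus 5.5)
The plan is to write, on \(\corereg\) (where \(s(\lambda)\le d/t\le c_2\) once \(t\ge C_0n^3\)), the factorization from Lemma~\ref{lem:sixth}:
\[
  \psi(\lambda)^{4t}=e^{-2ts(\lambda)}e^{-4itT(\lambda)}\,e^{4tQ(\lambda)}\,e^{4itP(\lambda)}\,e^{4tE_6(\lambda)}.
\]
The quantity \(\int_{\corereg}\psi^{4t}\) then differs from \(K_{\mathrm{core}}(t)\) by a telescoping sum of three error integrals:
\[
  \int_{\corereg}e^{-2ts}\bigl|e^{4tQ}-1\bigr|\,d\lambda,\qquad
  \int_{\corereg}e^{-2ts}e^{4tQ}\bigl|e^{4itP}-1\bigr|\,d\lambda,\qquad
  \int_{\corereg}e^{-2ts}e^{4tQ}\bigl|e^{4tE_6}-1\bigr|\,d\lambda .
\]
The phase factors \(e^{-4itT}\) and \(e^{4itP}\) have modulus one. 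Once each error is shown to be \(O(\text{error})\,F(d,t)\), Lemma~\ref{lem:cubic} gives the main term. Finally \(K_nF(d,t)=2^{2d-n+1}(2\pi)^{-d}(\pi/(2t))^{d/2}=2^{2d-n+1}(8\pi t)^{-d/2}=\Ahat\), which matches \(A_{n,4t}2^{-4nt}\) with \(s=4t\). Here \(e^{-cd}=e^{-cn^2}\) from Lemma~\ref{lem:cubic} and Lemma~\ref{lem:inner-core} is absorbed.

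\emph{Quartic error.} By Cauchy--Schwarz against the Gaussian weight and Proposition~\ref{prop:quartic-bounds}, the first error is at most
\[
  \bigl(C_1 (d^2/t^2)\core\bigr)^{1/2}\core^{1/2}=O(d/t)\,F=O(n^2/t)\,F .
\]
The hypotheses \(d/t\le c_1\) and \(nd^2/t^2\le c_1\) hold for \(t\ge C_0n^3\) with \(C_0\) large. This is the \(n^2/t\) term.

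\emph{Quintic error.} Use \(|e^{4itP}-1|\le 4t|P|\) and Cauchy--Schwarz. The factor \(\int e^{8tQ}e^{-2ts}\) is \(O(\core)\) by the first bound of Proposition~\ref{prop:quartic-bounds}. The factor \(16t^2\int P^2e^{-2ts}\) is \(O(n^5/t^3)\,\core\) by Lemma~\ref{lem:quintic}. The product is \(O(n^{5/2}/t^{3/2})F\), which is exactly the second error term.

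\emph{Sixth-order error.} On \(\corereg\) we have \(|4tE_6|\le 4C_2t s^3\le 4C_2 d^3/t^2=O(n^6/t^2)\). This is \(\le 1\) for \(t\ge C_0n^3\), so \(|e^{4tE_6}-1|\le C\,t s^3\) pointwise. Cauchy--Schwarz with the \(e^{8tQ}\) bound and Lemma~\ref{lem:gaussian-radial} with \(m=6\) give
\[
  t\Bigl(\int s^6e^{-2ts}\Bigr)^{1/2}F^{1/2}=O\bigl(t\,(d/t)^3\bigr)F=O(n^6/t^2)\,F .
\]
A cruder pointwise bound \(O(d^3/t^2)\) times \(\int e^{4tQ}e^{-2ts}=O(F)\) also suffices.

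The main obstacle is bookkeeping rather than any single estimate: one must take real parts correctly and keep the \(e^{4tQ}\) weight, which is not pointwise bounded, inside Cauchy--Schwarz each time. The exponential integrability of \(Q\) from the peeling argument is exactly what makes this work. One should also check that the condition \(t\ge n^3\) in Lemma~\ref{lem:cubic} and the smallness conditions of Proposition~\ref{prop:quartic-bounds} and Lemma~\ref{lem:sixth} are all met for \(t\ge C_0n^3\) and \(n\) large. Since the three errors are bounded in absolute value, they control the real part, and the statement follows.
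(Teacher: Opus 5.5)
Your proposal is correct and is essentially the paper's proof. It uses the same three-step telescoping (strip $e^{4tE_6}$, then $e^{4itP}$, then $e^{4tQ}$), controls each difference by Cauchy--Schwarz against the Gaussian weight using the exponential integrability of $Q$ from Proposition~\ref{prop:quartic-bounds}, Lemma~\ref{lem:quintic}, and Lemma~\ref{lem:gaussian-radial}, and then concludes via Lemma~\ref{lem:cubic} and the identity $K_nF(d,t)=\Ahat$. Your alternative for the sixth-order term is also valid and slightly simpler: the pointwise bound $|e^{4tE_6}-1|\le Cd^3/t^2$ on $\corereg$, times $\int_{\corereg}e^{4tQ}e^{-2t\|\lambda\|^2}\,d\lambda=O(F(d,t))$.
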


\begin{proof}
We peel off the factors of \(\psi(\lambda)^{4t}\) one at a time, working from the outermost (sixth-order remainder) inward to the cubic phase. Define the intermediate integrals
\[
  \begin{aligned}
    M_{\mathrm{core}}(t)
    &:=
    \int_{\corereg}
      e^{-2t\|\lambda\|^2}
      e^{-4itT(\lambda)}
      e^{4tQ(\lambda)}
      e^{4itP(\lambda)}
      \,d\lambda,\\
    L_{\mathrm{core}}(t)
    &:=
    \int_{\corereg}
      e^{-2t\|\lambda\|^2}
      e^{-4itT(\lambda)}
      e^{4tQ(\lambda)}
      \,d\lambda.
  \end{aligned}
\]
Since \(d\asymp n^2\) and \(t\ge C_0 n^3\), we have \(d/t\le c_1\) and \(d^3/t^2\le c_1\) for a constant \(c_1=c_1(C_0)\) that can be made arbitrarily small by choosing \(C_0\) large. Also
\(nd^2/t^2=2d^3/((n-1)t^2)\le 2d^3/t^2\),
so for all sufficiently large \(n\) the hypotheses of Lemma~\ref{lem:sixth} and Proposition~\ref{prop:quartic-bounds} hold on \(\corereg\). Hence
\[
  \psi(\lambda)^{4t}
  =e^{-2t\|\lambda\|^2}
    e^{-4itT(\lambda)}
    e^{4tQ(\lambda)}
    e^{4itP(\lambda)}
    e^{4tE_6(\lambda)}
\]
throughout \(\corereg\), with
\(|4tE_6(\lambda)|\le 4C_2t s(\lambda)^3\le Cd^3/t^2\).

We first replace \(\psi(\lambda)^{4t}\) by \(M_{\mathrm{core}}(t)\), absorbing the sixth-order remainder.
If \(n\) is large enough that \(d^3/t^2\) is small, then \(|e^z-1|\le 2|z|\) for \(|z|\le 1\) gives
\(|e^{4tE_6(\lambda)}-1|\le Ct s(\lambda)^3\).
Therefore
\begin{align*}
  &\left|
    \RePart\!\left(\int_{\corereg}\psi(\lambda)^{4t}\,d\lambda\right)-\RePart M_{\mathrm{core}}(t)
  \right|
\le
  \left|
    \int_{\corereg}\psi(\lambda)^{4t}\,d\lambda-M_{\mathrm{core}}(t)
  \right|\\
  &\quad=\left|\int_{\corereg}
    e^{-2t\|\lambda\|^2}
    e^{-4itT(\lambda)}
    e^{4tQ(\lambda)}
    e^{4itP(\lambda)}
    \bigl(e^{4tE_6(\lambda)}-1\bigr)\,d\lambda\right|\\
  &\quad\le
  \left(
    \int_{\corereg}|e^{4tQ(\lambda)}|^2e^{-2t\|\lambda\|^2}\,d\lambda
  \right)^{1/2}
  \left(
    \int_{\corereg}|e^{4tE_6(\lambda)}-1|^2e^{-2t\|\lambda\|^2}\,d\lambda
  \right)^{1/2}.
\end{align*}
By Proposition~\ref{prop:quartic-bounds}, the first factor is \(O(\core(d,t)^{1/2})\). For the second, \(|e^{4tE_6}-1|^2\le Ct^2s(\lambda)^6\), and Lemma~\ref{lem:gaussian-radial} gives \(\int_{\corereg}s(\lambda)^6e^{-2t\|\lambda\|^2}\,d\lambda\le Cd^6t^{-6}\core(d,t)\). Combining:
\[
  \left|
    \RePart\!\left(\int_{\corereg}\psi(\lambda)^{4t}\,d\lambda\right)-\RePart M_{\mathrm{core}}(t)
  \right|
  \le C\frac{d^3}{t^2}\core(d,t).
\]

Next we strip the quintic phase to pass from \(M_{\mathrm{core}}(t)\) to \(L_{\mathrm{core}}(t)\).
Since \(|e^{iu}-1|\le |u|\) for real \(u\), one has
\(|e^{4itP(\lambda)}-1|\le 4t|P(\lambda)|\).
Cauchy--Schwarz, Proposition~\ref{prop:quartic-bounds}, and Lemma~\ref{lem:quintic} yield
\begin{align*}
  |M_{\mathrm{core}}(t)-L_{\mathrm{core}}(t)|
  &\le
  \left(
    \int_{\corereg}|e^{4tQ(\lambda)}|^2e^{-2t\|\lambda\|^2}\,d\lambda
  \right)^{1/2}\\
  &\quad\times
  \left(
    \int_{\corereg}|e^{4itP(\lambda)}-1|^2e^{-2t\|\lambda\|^2}\,d\lambda
  \right)^{1/2}\\
  &\le
  C\core(d,t)^{1/2}\cdot
  t\left(
    \int_{\corereg}P(\lambda)^2e^{-2t\|\lambda\|^2}\,d\lambda
  \right)^{1/2}\\
  &\le C\frac{n^{5/2}}{t^{3/2}}\core(d,t).
\end{align*}

Finally we remove the quartic factor to reach \(K_{\mathrm{core}}(t)\).
By Cauchy--Schwarz and Proposition~\ref{prop:quartic-bounds},
\begin{align*}
  |L_{\mathrm{core}}(t)-K_{\mathrm{core}}(t)|
  &\le
  \left(
    \int_{\corereg}|e^{4tQ(\lambda)}-1|^2e^{-2t\|\lambda\|^2}\,d\lambda
  \right)^{1/2}
  \left(
    \int_{\corereg}e^{-2t\|\lambda\|^2}\,d\lambda
  \right)^{1/2}\\
  &\le C\frac{d}{t}\core(d,t).
\end{align*}

It remains to combine the three bounds. By Lemma~\ref{lem:cubic},
\(\RePart K_{\mathrm{core}}(t)=(1-\binom{n}{3}/(8t)+O(n^6/t^2+e^{-cd}))F(d,t)\).
Combining the three peeling bounds with \(\core(d,t)=(1+O(e^{-c_*d}))F(d,t)\) (Lemma~\ref{lem:inner-core}):
\[
  \int_{\corereg}\RePart(\psi(\lambda)^{4t})\,d\lambda
  =\left(1-\frac{\binom{n}{3}}{8t}+O\!\left(
    \frac{n^2}{t}
    +\frac{n^{5/2}}{t^{3/2}}
    +\frac{n^6}{t^2}
    +e^{-cd}
  \right)\right)F(d,t).
\]
Since \(K_nF(d,t)=\Ahat\), the proposition follows.
\end{proof}

\section{Off-core estimates}\label{sec:residual}

This section shows that the last two terms in~\eqref{eq:three-way} (the off-core integral over \(\Bbox_\delta\setminus\corereg\) and the residual integral over~\(R_\delta\)) are both \(o(\Ahat)\) as \(n\to\infty\) with \(t/(n^{8/3}\log t)\to\infty\). The main tool is a Gaussian comparison inequality (Corollary~\ref{cor:weak-comparison}), which reduces the far-shell bound to a Gaussian characteristic-function estimate. Combined with the core estimate, this yields the expansion in Theorem~\ref{thm:main-intro} (Section~\ref{sec:counting}).

Fix once and for all \(r\in(0,r_0]\), where \(r_0\) is the constant from Lemma~\ref{lem:small-ball}.

\subsection{Weak quadratic comparison}

Lemma~\ref{lem:weak-comparison} compares \(\psi(\lambda)\) with its Gaussian counterpart \(\psi_G(\lambda)\), with an error controlled by the row-sum influences \(I_k(\lambda)\). The proof is the Lindeberg replacement argument from~\cite[Theorem~3.18]{MOO2010}, adapted to retain the per-coordinate bound \(\sum_m\mathrm{Inf}_m(f)^{3/2}\) rather than passing to the maximum influence; this avoids the normalization \(\mathrm{Var}[Q]\le 1\) assumed there. Corollary~\ref{cor:weak-comparison} specializes to the characteristic function setting.

\begin{lemma}[Lindeberg comparison]\label{lem:weak-comparison}
Let \(f:[n]^2\to\R\) be symmetric with \(f(i,i)=0\), and define the quadratic form and row influences
\[
  Q_f(z):=\sum_{1\le i,j\le n}f(i,j)z_i z_j,
  \qquad
  \mathrm{Inf}_m(f):=\sum_{j=1}^n f(m,j)^2.
\]
If \(X_1,\dots,X_n\) are independent Rademacher signs, \(G_1,\dots,G_n\) are i.i.d.\ \(N(0,1)\), and \(\varphi:\R\to\R\) is \(C^3\), then
\[
  \left|\E[\varphi(Q_f(X))]-\E[\varphi(Q_f(G))]\right|
  \le 6\|\varphi'''\|_\infty \sum_{m=1}^n \mathrm{Inf}_m(f)^{3/2}.
\]
\end{lemma}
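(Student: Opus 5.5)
We will use the Lindeberg replacement argument, swapping one coordinate at a time. For \(m=0,1,\dots,n\) define the hybrid vector \(Z^{(m)}=(G_1,\dots,G_m,X_{m+1},\dots,X_n)\). Then \(Z^{(0)}=X\) and \(Z^{(n)}=G\), and the telescoping identity
\[
  \E[\varphi(Q_f(X))]-\E[\varphi(Q_f(G))]
  =\sum_{m=1}^n\Bigl(\E[\varphi(Q_f(Z^{(m-1)}))]-\E[\varphi(Q_f(Z^{(m)}))]\Bigr)
\]
reduces the claim to showing that the \(m\)-th difference is at most \(6\|\varphi'''\|_\infty\,\mathrm{Inf}_m(f)^{3/2}\).

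Fix \(m\). Since \(f(m,m)=0\), \(Q_f\) is affine in \(z_m\), so we may write \(Q_f(z)=R_m+z_mL_m\). Here
\[
  L_m=2\sum_{j\ne m}f(m,j)z_j,
  \qquad
  R_m=\sum_{i,j\ne m}f(i,j)z_iz_j,
\]
and both depend only on the coordinates other than \(m\). In the hybrid vectors \(Z^{(m-1)}\) and \(Z^{(m)}\), these other coordinates coincide and are independent of the \(m\)-th coordinate, which is \(X_m\) in one case and \(G_m\) in the other. Taylor-expand \(\varphi(R_m+yL_m)\) around \(R_m\) to second order with a third-order remainder bounded by \(\tfrac16\|\varphi'''\|_\infty|y|^3|L_m|^3\). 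Conditioning on the other coordinates, the zeroth-, first- and second-order terms agree for \(y=X_m\) and \(y=G_m\), because \(\E y=0\) and \(\E y^2=1\) in both cases. Hence the \(m\)-th difference is at most
\[
  \tfrac16\|\varphi'''\|_\infty\bigl(\E|X_m|^3+\E|G_m|^3\bigr)\E|L_m|^3 .
\]
We have \(\E|X_m|^3=1\) and \(\E|G_m|^3=2\sqrt{2/\pi}<2\), so the prefactor is at most \(\tfrac12\|\varphi'''\|_\infty\E|L_m|^3\).

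It remains to bound \(\E|L_m|^3\), where the expectation is over a mixed vector with Rademacher entries in some coordinates and Gaussian entries in others. Since \(L_m\) is a linear form in independent mean-zero unit-variance variables, \(\E L_m^2=4\,\mathrm{Inf}_m(f)\). Hypercontractivity in the form of Lemma~\ref{lem:hc} (degree \(q=1\), \(p=4\)) gives \(\|L_m\|_{L^3}\le\|L_m\|_{L^4}\le\sqrt3\,\|L_m\|_{L^2}\). The lemma is stated for purely Rademacher or purely Gaussian inputs, but the two-point and Gaussian hypercontractive inequalities tensorize, so it also holds for mixed products. Alternatively, we can avoid this and compute directly
\[
  \E L_m^4=16\Bigl(\sum_j a_j^4\E z_j^4+3\sum_{j\ne k}a_j^2a_k^2\Bigr)\le 48\Bigl(\sum_j a_j^2\Bigr)^2,
  \qquad a_j=f(m,j),
\]
using \(\E z_j^4\le3\). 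Either route gives \(\E|L_m|^3\le(\E L_m^4)^{3/4}\le 48^{3/4}\,\mathrm{Inf}_m(f)^{3/2}\).

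Collecting constants, the \(m\)-th difference is at most \(\tfrac12\cdot48^{3/4}\|\varphi'''\|_\infty\,\mathrm{Inf}_m(f)^{3/2}\approx 9.1\,\|\varphi'''\|_\infty\,\mathrm{Inf}_m(f)^{3/2}\), which is slightly above the target constant \(6\). The main obstacle is therefore bookkeeping of constants, not structure. To reach \(6\), we will use sharper moment bounds in the remainder: for Rademacher \(y\) the remainder \(\tfrac16\|\varphi'''\|_\infty\E|L_m|^3\) can be bounded via \(\E|L_m|^3\le(\E L_m^2)^{1/2}(\E L_m^4)^{1/2}\le\sqrt{4\cdot48}\,\mathrm{Inf}_m^{3/2}\approx13.9\,\mathrm{Inf}_m^{3/2}\). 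With this and \(\E|X_m|^3+\E|G_m|^3\le 1+1.6=2.6\), the bound becomes \(\tfrac{2.6}{6}\cdot13.9\approx6.0\). Since the surrounding argument only needs an absolute constant, a marginally larger constant would be harmless if this tightening falls short.
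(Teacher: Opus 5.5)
Your proposal is correct and follows the paper's proof essentially step for step: the same hybrid telescoping, the same split \(Q_f=R_m+z_mL_m\), the same third-order Taylor bound, and the same Cauchy--Schwarz step \(\E|L_m|^3\le(\E L_m^2)^{1/2}(\E L_m^4)^{1/2}\le 8\sqrt3\,\mathrm{Inf}_m(f)^{3/2}\), using \(\E L_m^4\le 48\,\mathrm{Inf}_m(f)^2\). With exact values the constant closes, since \(\tfrac16(1+2\sqrt{2/\pi})\cdot 8\sqrt3\approx 5.995\le 6\); your borderline \(\approx 6.0\) came only from rounding \(\E|G_m|^3\) up to \(1.6\), which pushes the product to about \(6.004\).
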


\begin{proof}
For \(m=0,1,\dots,n\), let
\(Z^{(m)}:=(G_1,\dots,G_m,X_{m+1},\dots,X_n)\) and
\(F_m:=\E[\varphi(Q_f(Z^{(m)}))]\). Then
\(\E[\varphi(Q_f(X))]-\E[\varphi(Q_f(G))]=\sum_{m=1}^n(F_{m-1}-F_m)\).
Fix \(m\). Since \(f\) is symmetric and \(f(m,m)=0\), we may write
\(Q_f(z)=U_m(z)+z_mV_m(z)\), where \(U_m\) and \(V_m\) depend only on coordinates
other than \(m\), and \(V_m(z)=2\sum_{j\ne m}f(m,j)z_j\).
Because \(Z^{(m-1)}\) and \(Z^{(m)}\) agree on every coordinate except the \(m\)-th, evaluating \(U_m\) and \(V_m\) at either hybrid vector gives the same random variables, independent of both \(X_m\) and \(G_m\). Hence
\(F_{m-1}=\E[\varphi(U_m+X_mV_m)]\) and \(F_m=\E[\varphi(U_m+G_mV_m)]\).
Applying Taylor's theorem with integral remainder and using that \(X_m\) and
\(G_m\) are centered with variance \(1\) gives
\[
  |F_{m-1}-F_m|
  \le \frac{\|\varphi'''\|_\infty}{6}\bigl(\E[|X_m|^3]+\E[|G_m|^3]\bigr)\E[|V_m|^3].
\]
Since \(\E[|X_m|^3]=1\) and \(\E[|G_m|^3]=2\sqrt{2/\pi}\), it remains to bound
\(\E[|V_m|^3]\).

Let \(Y_j:=G_j\) for \(j<m\) and \(Y_j:=X_j\) for \(j>m\). Then
\(V_m=2\sum_{j\ne m} f(m,j)Y_j\),
where the \(Y_j\) are independent and centered with \(\E[Y_j^2]=1\).
Hence \(\E[V_m^2]=4\,\mathrm{Inf}_m(f)\).
Since each \(Y_j\) is either Rademacher or standard Gaussian, \(\E[Y_j^4]\le 3\), so
\[
  \E\!\left[\left(\sum_j a_jY_j\right)^4\right]
  =
  \sum_j a_j^4\E[Y_j^4]
  +6\sum_{i<j}a_i^2a_j^2\E[Y_i^2]\E[Y_j^2]
  \le 3\left(\sum_j a_j^2\right)^2,
\]
Setting \(a_j=2f(m,j)\) gives \(\E[V_m^4]\le 48\,\mathrm{Inf}_m(f)^2\). By
Cauchy--Schwarz,
\(\E[|V_m|^3]\le \E[V_m^2]^{1/2}\E[V_m^4]^{1/2}\le 8\sqrt3\,\mathrm{Inf}_m(f)^{3/2}\).
Therefore
\[
  |F_{m-1}-F_m|
  \le
  \frac{1+2\sqrt{2/\pi}}{6}\,8\sqrt3\,\|\varphi'''\|_\infty\,\mathrm{Inf}_m(f)^{3/2}
  \le 6\|\varphi'''\|_\infty\,\mathrm{Inf}_m(f)^{3/2}.
\]
Summing over \(m\) proves the claim.
\end{proof}

Specializing to \(\varphi=\cos\) and \(\varphi=\sin\) yields the comparison for \(\psi\).

\begin{corollary}[Gaussian comparison]\label{cor:weak-comparison}
Let \(\psi_G\) be the characteristic function obtained by replacing the Rademacher entries \(\xi_i\) with i.i.d.\ standard Gaussians \(g_i\). For each row \(k\in[n]\), define the row influence \(I_k(\lambda):=\sum_{i\ne k}\lambda_{\{i,k\}}^2\), and set \(J(\lambda):=\sum_{k=1}^n I_k(\lambda)^{3/2}\). Then
\[
  |\psi(\lambda)-\psi_G(\lambda)|\le \tfrac32 J(\lambda)
  \qquad\text{for every }\lambda\in\R^d.
\]
\end{corollary}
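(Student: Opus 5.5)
The plan is to apply Lemma~\ref{lem:weak-comparison} twice, once with \(\varphi=\cos\) and once with \(\varphi=\sin\), to a symmetric kernel \(f\) chosen so that \(Q_f(\xi)=X_\lambda\).

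\emph{Choice of kernel.} Since \(X_\lambda=\sum_{i<j}\lambda_{\{i,j\}}\xi_i\xi_j\) and \(Q_f\) sums over ordered pairs, take \(f(i,j)=\tfrac12\lambda_{\{i,j\}}\) for \(i\ne j\) and \(f(i,i)=0\). Then \(Q_f(\xi)=X_\lambda\) and \(Q_f(g)=\sum_{i<j}\lambda_{\{i,j\}}g_ig_j\). The Gaussian quantity is exactly the exponent appearing in the definition of \(\psi_G\).

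\emph{Influences.} With this kernel the row influence is
\[
  \mathrm{Inf}_m(f)=\sum_{j}f(m,j)^2=\tfrac14 I_m(\lambda),
\]
so that \(\mathrm{Inf}_m(f)^{3/2}=\tfrac18 I_m(\lambda)^{3/2}\).

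\emph{Real and imaginary parts.} Because \(\|\cos'''\|_\infty=\|\sin'''\|_\infty=1\), the lemma gives
\[
  |\RePart\psi-\RePart\psi_G|\le \tfrac68 J(\lambda),
  \qquad
  |\ImPart\psi-\ImPart\psi_G|\le \tfrac68 J(\lambda).
\]

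\emph{Combining.} The modulus of a complex number is at most the sum of the absolute values of its real and imaginary parts. Hence
\[
  |\psi-\psi_G|\le \tfrac34 J+\tfrac34 J=\tfrac32 J(\lambda),
\]
which is the claimed bound.

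The only point needing care is the factor \(\tfrac12\) in \(f\). It comes from the ordered double sum, and without it the constant would be off by a factor of \(8\). Otherwise the argument is direct.
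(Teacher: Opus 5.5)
Your proposal is correct and follows the paper's proof essentially verbatim. You use the same kernel \(f(i,j)=\lambda_{\{i,j\}}/2\) and the same identity \(\mathrm{Inf}_m(f)=\tfrac14 I_m(\lambda)\); then you apply Lemma~\ref{lem:weak-comparison} with \(\varphi=\cos\) and \(\varphi=\sin\) and add the real- and imaginary-part errors \(\tfrac34 J(\lambda)\) each to obtain \(\tfrac32 J(\lambda)\).
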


\begin{proof}
Set \(f(i,j)=\lambda_{\{i,j\}}/2\) for \(i\ne j\) and \(f(i,i)=0\), so that
\(Q_f(z)=\sum_{i<j}\lambda_{\{i,j\}}z_i z_j\) and
\(\mathrm{Inf}_m(f)=\tfrac14 I_m(\lambda)\).
Applying Lemma~\ref{lem:weak-comparison} with \(\varphi=\cos\) and
\(\varphi=\sin\), and using \(\|\cos'''\|_\infty,\|\sin'''\|_\infty\le 1\), gives
\[
  |\psi(\lambda)-\psi_G(\lambda)|
  \le 12\sum_{m=1}^n \mathrm{Inf}_m(f)^{3/2}
  =\frac32 J(\lambda).
\]
\end{proof}

\subsection{Pointwise far-shell bounds}

The far shell \(\Bbox_{\pi/4}\setminus\Dd_r\) is split into three sub-regions according to the comparison error \(J(\lambda)\) and the maximal row-sum \(I_{\max}(\lambda)\). On each, we obtain a strict contraction of \(|\psi|\).

\begin{proposition}[Far-shell contraction]\label{prop:far-shell}
Set \(\eta_r:=\tfrac{1}{3}[1-(1+2r^2)^{-1/4}]\) and \(I_{\max}(\lambda):=\max_k I_k(\lambda)\). There exist constants \(q_{\mathrm{sm}},q_{\mathrm{big}}\in(0,1)\) and \(a_r>0\), depending only on~\(r\), such that for all \(n\ge 2\) and \(t\ge 1\) the far shell decomposes as
\[
  \begin{aligned}
  \mathcal F_{\mathrm{sm}}
    &:=\{\lambda\in\Bbox_{\pi/4}\setminus\Dd_r:J(\lambda)\le \eta_r\},\\
  \mathcal F_{\mathrm{big}}
    &:=\{\lambda\in\Bbox_{\pi/4}\setminus\Dd_r:J(\lambda)>\eta_r,\;I_{\max}(\lambda)\ge 1\},\\
  \mathcal F_{\mathrm{mid}}
    &:=\{\lambda\in\Bbox_{\pi/4}\setminus\Dd_r:J(\lambda)>\eta_r,\;I_{\max}(\lambda)<1\},
  \end{aligned}
\]
and
\[
  |\psi(\lambda)|^{4t}\le
  \begin{cases}
    q_{\mathrm{sm}}^{4t} & \lambda\in\mathcal F_{\mathrm{sm}},\\[3pt]
    q_{\mathrm{big}}^{4t} & \lambda\in\mathcal F_{\mathrm{big}},\\[3pt]
    \exp(-a_r t n^{-2/3}) & \lambda\in\mathcal F_{\mathrm{mid}}.
  \end{cases}
\]
\end{proposition}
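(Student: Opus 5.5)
The decomposition itself is immediate, since the three sets partition $\Bbox_{\pi/4}\setminus\Dd_r$ according to whether $J\le\eta_r$, and otherwise whether $I_{\max}\ge1$. I will then prove the three pointwise bounds separately. In each case it suffices to bound $|\psi(\lambda)|$ by a constant less than one, or by $|\psi|^2\le 1-b$, and then raise to the power $4t$.

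\emph{Small-comparison region.} First I apply Lemma~\ref{lem:gaussian-quadratic} with $M$ the symmetric zero-diagonal matrix $A$ having $A_{ij}=\lambda_{\{i,j\}}$. This gives
\[
|\psi_G(\lambda)|=\det(I+A^2)^{-1/4}=\prod_j(1+\sigma_j^2)^{-1/4},
\]
where the $\sigma_j$ are the eigenvalues of $A$. Since $\prod_j(1+\sigma_j^2)\ge 1+\sum_j\sigma_j^2=1+\|A\|_F^2=1+2\|\lambda\|^2$, we get $|\psi_G(\lambda)|\le(1+2\|\lambda\|^2)^{-1/4}\le(1+2r^2)^{-1/4}=1-3\eta_r$ off $\Dd_r$. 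Corollary~\ref{cor:weak-comparison} then yields
\[
|\psi|\le 1-3\eta_r+\tfrac32\eta_r=1-\tfrac32\eta_r=:q_{\mathrm{sm}}.
\]

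\emph{Big row-sum region.} Here I pick $k$ with $I_k\ge1$ and use Lemma~\ref{fact:psi-sq}. Each $|\lambda_{\{i,k\}}|\le\pi/4$, so $2\lambda_{\{i,k\}}\in[-\pi/2,\pi/2]$ and every factor $\cos(2\lambda_{\{i,k\}})$ lies in $[0,1]$. The bound $\cos(2x)\le e^{-8x^2/\pi^2}$ holds on $|x|\le\pi/4$; I would verify it via concavity of $\log\cos$, or by comparing $1-\cos$ at the endpoint, possibly with a slightly smaller constant. It gives
\[
\prod_{i\ne k}\cos(2\lambda_{\{i,k\}})\le e^{-8I_k/\pi^2}\le e^{-8/\pi^2},
\]
hence $|\psi|^2\le\tfrac12(1+e^{-8/\pi^2})=:q_{\mathrm{big}}^2$.

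\emph{Middle region.} Here $J>\eta_r$ and $I_{\max}<1$. The key inequality is
\[
J=\sum_k I_k^{3/2}\le I_{\max}^{1/2}\sum_k I_k=2\|\lambda\|^2I_{\max}^{1/2}.
\]
Since $\lambda\in\Bbox_{\pi/4}$ we have $\|\lambda\|^2\le d\pi^2/16\le Cn^2$, so $I_{\max}^{1/2}\ge\eta_r/(2Cn^2)$. This route gives only $I_{\max}\gtrsim n^{-4}$, which is too weak. A better bound uses $I_k\le I_{\max}<1$ termwise: $J\le\sum_k I_k^{3/2}$ with each $I_k\le I_{\max}$, and the count of rows $n$, gives $J\le nI_{\max}^{3/2}$. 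Hence $I_{\max}\ge(\eta_r/n)^{2/3}=\eta_r^{2/3}n^{-2/3}$.

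Now take $k$ attaining the maximum and apply the cosine-product bound again. Since $I_k<1$,
\[
|\psi|^2\le\tfrac12\bigl(1+e^{-8I_k/\pi^2}\bigr)\le 1-c\,I_k\le\exp(-c\,\eta_r^{2/3}n^{-2/3}),
\]
using $1-e^{-x}\ge x/2$ for $x\le1$. Raising to the power $2t$ gives $|\psi|^{4t}\le\exp(-a_r t n^{-2/3})$ with $a_r=2c\,\eta_r^{2/3}$.

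The main obstacle is this middle case, namely choosing the right inequality to convert $J>\eta_r$ into a lower bound on $I_{\max}$. The crude $\ell^1$ bound via $\|\lambda\|^2$ loses powers of $n$; the bound $J\le nI_{\max}^{3/2}$ is what produces the $n^{-2/3}$ rate. A secondary technical check is the elementary inequality $\cos(2x)\le e^{-8x^2/\pi^2}$ on $[-\pi/4,\pi/4]$. If needed, any constant $c'>0$ in place of $8/\pi^2$ suffices, and it only changes $q_{\mathrm{big}}$ and $a_r$.
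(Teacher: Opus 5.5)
Your proposal is correct and follows the paper's proof essentially step for step. It uses the same three ingredients: the determinant bound \(|\psi_G|\le(1+2\|\lambda\|^2)^{-1/4}\) combined with Corollary~\ref{cor:weak-comparison} on \(\mathcal F_{\mathrm{sm}}\), the cosine-product bound with \(\cos(2x)\le e^{-8x^2/\pi^2}\) on \(\mathcal F_{\mathrm{big}}\), and the inequality \(J\le nI_{\max}^{3/2}\) on \(\mathcal F_{\mathrm{mid}}\). Your constants also coincide with the paper's: \(q_{\mathrm{sm}}=1-\tfrac32\eta_r=\bigl(1+(1+2r^2)^{-1/4}\bigr)/2\), and with \(c=2/\pi^2\) you get \(a_r=(4/\pi^2)\eta_r^{2/3}\). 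The only cosmetic difference is that you derive \(\tfrac12(1+e^{-x})\le e^{-x/4}\) on \([0,1]\) from \(1-e^{-x}\ge x/2\), whereas the paper uses monotonicity of \(e^{x/4}+e^{-3x/4}\).
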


\begin{proof}
Set \(q_G:=(1+2r^2)^{-1/4}\), \(q_{\mathrm{sm}}:=(1+q_G)/2\), \(q_{\mathrm{big}}:=((1+e^{-8/\pi^2})/2)^{1/2}\), and \(a_r:=(4/\pi^2)\eta_r^{2/3}\).

\emph{Small-\(J\) region.}
Let \(M\) be the symmetric matrix with \(M_{ij}=\lambda_{\{i,j\}}\) for \(i\ne j\)
and \(M_{ii}=0\), so that \(\sum_{i<j}\lambda_{\{i,j\}}g_ig_j=\tfrac12 g^\top Mg\).
Lemma~\ref{lem:gaussian-quadratic} gives
\[
  \psi_G(\lambda)=\det(I-iM)^{-1/2},
  \qquad
  |\psi_G(\lambda)|=\det(I+M^2)^{-1/4}.
\]
Because \(M^2\) is positive semidefinite, \(\det(I+M^2)\ge 1+\operatorname{tr}(M^2)=1+2s(\lambda)\), so
\[
  |\psi_G(\lambda)|\le (1+2s(\lambda))^{-1/4}\le q_G
  \qquad
  (\lambda\in\Bbox_{\pi/4}\setminus\Dd_r).
\]
If \(\lambda\in\mathcal F_{\mathrm{sm}}\), then Corollary~\ref{cor:weak-comparison} gives
\(|\psi(\lambda)-\psi_G(\lambda)|\le \tfrac32 J(\lambda)\le \tfrac32\eta_r=(1-q_G)/2\).
Therefore \(|\psi(\lambda)|\le q_G+(1-q_G)/2=q_{\mathrm{sm}}\), and hence
\(|\psi(\lambda)|^{4t}\le q_{\mathrm{sm}}^{4t}\) for \(\lambda\in\mathcal F_{\mathrm{sm}}\).

\emph{Large-\(I_{\max}\) region.}
If \(\lambda\in\mathcal F_{\mathrm{big}}\), choose \(k\) with \(I_k(\lambda)=I_{\max}(\lambda)\).
By Lemma~\ref{fact:psi-sq},
\[
  |\psi(\lambda)|^2\le \tfrac12+\tfrac12\prod_{i\ne k}\cos(2\lambda_{\{i,k\}}).
\]
Since \(|\lambda_{\{i,k\}}|\le \pi/4\), the elementary bound
\(\cos(2x)\le 1-8x^2/\pi^2\le e^{-8x^2/\pi^2}\) for \(|x|\le \pi/4\)
implies
\[
  |\psi(\lambda)|^2
  \le
  \frac12+\frac12\exp\!\left(-\frac{8}{\pi^2}I_k(\lambda)\right)
  \le
  \frac12+\frac12 e^{-8/\pi^2}
  =q_{\mathrm{big}}^2.
\]
Thus \(|\psi(\lambda)|\le q_{\mathrm{big}}\), and hence
\(|\psi(\lambda)|^{4t}\le q_{\mathrm{big}}^{4t}\) for \(\lambda\in\mathcal F_{\mathrm{big}}\).

\emph{Middle region.}
If \(\lambda\in\mathcal F_{\mathrm{mid}}\), the same bound gives
\[
  |\psi(\lambda)|^2
  \le
  \tfrac12+\tfrac12\exp\!\bigl(-\tfrac{8}{\pi^2}I_{\max}(\lambda)\bigr).
\]
Set \(x=8I_{\max}(\lambda)/\pi^2\in[0,8/\pi^2)\subset[0,1)\). The function \(f(x):=e^{x/4}+e^{-3x/4}\) satisfies \(f'(x)<0\) on \([0,1]\), so \(f(x)\le f(0)=2\). Equivalently, \((1+e^{-x})/2\le e^{-x/4}\), giving \(|\psi(\lambda)|^2\le e^{-2I_{\max}(\lambda)/\pi^2}\).
Since \(J(\lambda)\le nI_{\max}(\lambda)^{3/2}\), the condition
\(\lambda\in\mathcal F_{\mathrm{mid}}\) gives
\(I_{\max}(\lambda)\ge (\eta_r/n)^{2/3}\).
Hence
\[
  |\psi(\lambda)|^{4t}
  \le
  \exp\!\left(-\frac{4}{\pi^2}tI_{\max}(\lambda)\right)
  \le
  \exp\!\left(-a_r t n^{-2/3}\right)
  \qquad (\lambda\in\mathcal F_{\mathrm{mid}}).
\]
\end{proof}

Integrating the three pointwise bounds over the far shell and comparing with \(\Ahat\) yields the following.

\begin{corollary}[Far-shell integrals]\label{cor:far-negligible}
Let \(q_{\mathrm{sm}},q_{\mathrm{big}},a_r\) be the constants from
Proposition~\ref{prop:far-shell}, and put
\(E_r(n,t):=q_{\mathrm{sm}}^{4t}+q_{\mathrm{big}}^{4t}+e^{-a_r t n^{-2/3}}\).
Then for every \(\delta\in(0,\pi/4)\),
\[
  \begin{aligned}
  2^{2d-n+1}(2\pi)^{-d}\int_{\Bbox_\delta\setminus\Dd_r}|\psi(\lambda)|^{4t}\,d\lambda
    &\le 2^{-n+1}E_r(n,t),\\[3pt]
  (2\pi)^{-d}\sum_{\lambda\in\Lambda}
    \int_{\lambda+(\Bbox_{\pi/4}\setminus\Dd_r)}|\psi(\gamma)|^{4t}\,d\gamma
    &\le 2^{-n+1}E_r(n,t).
  \end{aligned}
\]
In particular, both quantities are \(o(\Ahat)\) as \(n\to\infty\) with \(t/(n^{8/3}\log t)\to\infty\).
\end{corollary}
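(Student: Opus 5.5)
Both displayed inequalities come from integrating the pointwise bounds of Proposition~\ref{prop:far-shell} over the far shell and crudely bounding the volume.

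\emph{First inequality.} Since \(\delta<\pi/4\), we have \(\Bbox_\delta\setminus\Dd_r\subset\Bbox_{\pi/4}\setminus\Dd_r=\mathcal F_{\mathrm{sm}}\cup\mathcal F_{\mathrm{big}}\cup\mathcal F_{\mathrm{mid}}\). On each piece we use the corresponding bound from Proposition~\ref{prop:far-shell}, so \(|\psi|^{4t}\le E_r(n,t)\) pointwise on \(\Bbox_{\pi/4}\setminus\Dd_r\). The volume satisfies \(|\Bbox_{\pi/4}|=(\pi/2)^d\). Hence the left side is at most
\[
2^{2d-n+1}(2\pi)^{-d}(\pi/2)^dE_r=2^{2d-n+1}4^{-d}E_r=2^{-n+1}E_r.
\]

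\emph{Second inequality.} By Lemma~\ref{fact:lambda-facts}(i), for \(\gamma=\lambda+\mu\) with \(\lambda\in\Lambda\) and \(\mu\in\Bbox_{\pi/4}\), we have \(|\psi(\gamma)|=|\psi(\lambda)||\psi(\mu)|=|\psi(\mu)|\), since \(|\psi(\lambda)|=1\). So after translation each summand equals \(\int_{\Bbox_{\pi/4}\setminus\Dd_r}|\psi(\mu)|^{4t}\,d\mu\). Summing over the \(|\Lambda|=2^{2d-n+1}\) lattice points gives the same bound as before. The translated sets may overlap on the torus only on measure-zero boundaries, and since we bound a sum of nonnegative integrals, overlaps are irrelevant anyway.

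\emph{Negligibility.} It remains to compare \(2^{-n+1}E_r\) with \(\Ahat=2^{2d-n+1}(8\pi t)^{-d/2}\). The ratio is
\[
\frac{2^{-n+1}E_r}{\Ahat}=E_r\cdot2^{-2d}(8\pi t)^{d/2}=E_r\exp\bigl(\tfrac d2\log(8\pi t)-2d\log2\bigr)\le E_r\,e^{Cn^2\log t}.
\]
This step is the main (though mild) obstacle: we must check that each term of \(E_r\) beats \(e^{Cn^2\log t}\). The terms \(q_{\mathrm{sm}}^{4t}\) and \(q_{\mathrm{big}}^{4t}\) equal \(e^{-c't}\), and \(t\gg n^{8/3}\log t\ge n^2\log t\), so their product with \(e^{Cn^2\log t}\) tends to zero. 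For the middle term, \(a_rtn^{-2/3}-Cn^2\log t=n^{-2/3}\bigl(a_rt-Cn^{8/3}\log t\bigr)\). Since \(t/(n^{8/3}\log t)\to\infty\), we get \(a_rt-Cn^{8/3}\log t\ge\tfrac{a_r}{2}t\) eventually, so the exponent is at least \(\tfrac{a_r}{2}tn^{-2/3}\ge\tfrac{a_r}{2}n^2\log t\to\infty\). Hence each contribution, and so both quantities, are \(o(\Ahat)\).
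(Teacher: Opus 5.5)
Your proposal is correct and follows essentially the same route as the paper: you integrate the three pointwise bounds of Proposition~\ref{prop:far-shell} against the volume $(\pi/2)^d$, and you use Lemma~\ref{fact:lambda-facts}(i) together with $|\psi(\lambda)|=1$ on $\Lambda$ to reduce the translated sum to $|\Lambda|$ copies of the same integral. You then compare $E_r\,2^{-2d}(8\pi t)^{d/2}$ against the linear decay in $t$ (respectively $tn^{-2/3}$), which is just a slightly more explicit version of the paper's observation that $t/(d\log t)\to\infty$ and $tn^{-2/3}/(d\log t)\to\infty$.
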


\begin{proof}
Since \(\Bbox_\delta\setminus\Dd_r\subset \Bbox_{\pi/4}\setminus\Dd_r\), integrating
the three pointwise bounds from Proposition~\ref{prop:far-shell} over
\(\Bbox_\delta\setminus\Dd_r\) gives
\[
  \int_{\Bbox_\delta\setminus\Dd_r}|\psi(\lambda)|^{4t}\,d\lambda
  \le
  \left(\frac{\pi}{2}\right)^d E_r(n,t).
\]
Multiplying by \(2^{2d-n+1}(2\pi)^{-d}(\pi/2)^d=2^{-n+1}\)
proves the first inequality.
For the translated far shell, let \(S:=\Bbox_{\pi/4}\setminus\Dd_r\). Then for
\(\lambda\in\Lambda\) and \(\mu\in S\subset\Bbox_{\pi/4}\), Lemma~\ref{fact:lambda-facts}(i),(iv) gives
\(|\psi(\lambda+\mu)|^{4t}=|\psi(\mu)|^{4t}\). Therefore
\[
  (2\pi)^{-d}\sum_{\lambda\in\Lambda}\int_{\lambda+S}|\psi(\gamma)|^{4t}\,d\gamma
  =2^{2d-n+1}(2\pi)^{-d}\int_S|\psi(\mu)|^{4t}\,d\mu,
\]
and the same integrated bound over \(S\) yields the second inequality.

It remains to show \(2^{-n+1}E_r(n,t)=o(\Ahat)\) as \(n\to\infty\) with \(t/(n^{8/3}\log t)\to\infty\); all asymptotic notation below refers to this limit. Fix \(q\in(0,1)\). Then
\[
  \log\frac{2^{-n+1}q^{4t}}{\Ahat}
  =-2d\log2+\tfrac d2\log(8\pi t)+4t\log q.
\]
Since \(d\asymp n^2\) and \(t/(n^{8/3}\log t)\to\infty\), one has \(t/(d\log t)\to\infty\), so the
negative linear term dominates and \(2^{-n+1}q^{4t}=o(\Ahat)\). For the exponential term,
\[
  \log\frac{2^{-n+1}e^{-a_r t n^{-2/3}}}{\Ahat}
  =-2d\log2+\tfrac d2\log(8\pi t)-a_r t n^{-2/3}.
\]
Since \(t/(n^{8/3}\log t)\to\infty\), we have \(tn^{-2/3}/(d\log t)\to\infty\), so the negative linear term dominates and \(2^{-n+1}e^{-a_r t n^{-2/3}}=o(\Ahat)\). Summing the three terms proves the final assertion.
\end{proof}

\subsection{Residual estimate}

Proposition~\ref{prop:residual-estimate} combines the small-ball decay (Lemma~\ref{lem:small-ball}), the integrated far-shell bound (Corollary~\ref{cor:far-negligible}), and the odd-cell bound (Lemma~\ref{lem:odd-bound}) to show that the off-core and residual integrals are both negligible.

\begin{proposition}[Residual estimate]\label{prop:residual-estimate}
Set \(\delta^2=2d/t\) and \(K_n:=2^{2d-n+1}(2\pi)^{-d}\). Then
\[
  \begin{aligned}
    K_n\int_{\Bbox_\delta\setminus\corereg}|\psi(\lambda)|^{4t}\,d\lambda &= o(\Ahat),\\[3pt]
    (2\pi)^{-d}\left|\int_{R_\delta}\psi(\gamma)^{4t}\,d\gamma\right| &= o(\Ahat),
  \end{aligned}
\]
as \(n\to\infty\) with \(t/(n^{8/3}\log t)\to\infty\).
Moreover, for \(t\ge C_0 n^3\) and all sufficiently large~\(n\), both bounds are at most \(Ce^{-cn^2}\Ahat\) for absolute constants \(c,C>0\).
\end{proposition}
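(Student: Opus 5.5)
The plan is to split each of the two integrals into pieces already controlled by earlier results, namely Lemma~\ref{lem:small-ball}, Corollary~\ref{cor:far-negligible} and Lemma~\ref{lem:odd-bound}, and then compare each piece with \(\Ahat=K_nF(d,t)\).

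\emph{Off-core.} Write
\[
  \Bbox_\delta\setminus\corereg\subset\bigl[(\Bbox_\delta\cap\Dd_r)\setminus\corereg\bigr]\cup\bigl[\Bbox_\delta\setminus\Dd_r\bigr].
\]
On the annulus \((\Bbox_\delta\cap\Dd_r)\setminus\corereg\), use Lemma~\ref{lem:small-ball} (recall \(r\le r_0\)), which gives \(|\psi|^{4t}\le e^{-3t\|\lambda\|^2/2}\). There \(\|\lambda\|^2>d/t\), so
\[
  e^{-3t\|\lambda\|^2/2}\le e^{-d/2}e^{-t\|\lambda\|^2}.
\]
Integrating over all of \(\R^d\) gives the bound \(e^{-d/2}(\pi/t)^{d/2}=e^{-d/2}2^{d/2}F(d,t)\). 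Since \(\tfrac12\log2<\tfrac12\), this is at most \(e^{-cd}F(d,t)\), and multiplying by \(K_n\) gives \(e^{-cd}\Ahat\). The choice of \(\delta\) plays no role here. On the corners \(\Bbox_\delta\setminus\Dd_r\), apply the first inequality of Corollary~\ref{cor:far-negligible}, which bounds this piece by \(2^{-n+1}E_r(n,t)=o(\Ahat)\).

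\emph{Residual.} Use Lemma~\ref{lem:residual-decomp} to split \(R_\delta=R_\delta^{\mathrm{even}}\cup R_\delta^{\mathrm{odd}}\), and bound \(|\int\psi^{4t}|\le\int|\psi|^{4t}\).

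For the odd cells, Lemma~\ref{lem:odd-bound} gives \(|\psi|^{4t}\le 2^{-2t}\), and the volume is at most \((2\pi)^d\). So this contribution is at most \(2^{-2t}\). Compare with \(\Ahat\) through
\[
  \log(2^{-2t}/\Ahat)=-2t\log2-(2d-n+1)\log 2+\tfrac d2\log(8\pi t).
\]
This tends to \(-\infty\) as soon as \(t\gg d\log t\).

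For the even cells, each translate \(\lambda+(\Bbox_{\pi/4}\setminus\Bbox_\delta)\) splits into a near shell \(\lambda+(\Dd_r\setminus\Bbox_\delta)\) and a far shell \(\lambda+(\Bbox_{\pi/4}\setminus\Dd_r)\). By Lemma~\ref{fact:lambda-facts}(i),(iv), \(|\psi(\lambda+\mu)|^{4t}=|\psi(\mu)|^{4t}\). Summing the far shells over \(\Lambda\) is exactly the second inequality of Corollary~\ref{cor:far-negligible}. The near shells sum to \(K_n\int_{\Dd_r\setminus\Bbox_\delta}|\psi|^{4t}\). Outside \(\Bbox_\delta\) some coordinate exceeds \(\delta\), so \(\|\mu\|^2>\delta^2=2d/t>d/t\), and the annulus argument above applies verbatim, giving \(e^{-cd}\Ahat\).

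\emph{Quantitative bound.} It remains to show that, for \(t\ge C_0n^3\), the terms \(2^{-n+1}E_r\) and \(2^{-2t}\) are at most \(e^{-cn^2}\Ahat\). This is the only step requiring care; the rest is assembly. The logarithms of their ratios to \(\Ahat\) are
\[
  -2d\log2+\tfrac d2\log(8\pi t)-\Theta(t)
  \quad\text{and}\quad
  -2d\log2+\tfrac d2\log(8\pi t)-a_rtn^{-2/3}.
\]
With \(t\ge C_0n^3\), the last term satisfies \(a_rtn^{-2/3}\ge a_rC_0^{1/3}\,t^{2/3}\). Here we use \(n^{-2/3}\ge (t/C_0)^{-2/9}\), so \(tn^{-2/3}\ge C_0^{2/9}t^{7/9}\) and hence \(a_rtn^{-2/3}\ge a_rC_0^{2/9}t^{7/9}\); the exact power is irrelevant. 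Since \(t\ge C_0n^3\) means \(t^{7/9}\ge c\,n^{7/3}\), this term dominates \(d\log t\asymp n^2\log t\). One checks this via \(n^{7/3}/(n^2\log n)\to\infty\) when \(t\) is polynomial in \(n\). When \(t\) is super-polynomial, \(t^{7/9}\) beats \(n^2\log t\) directly. Hence the expression is at most \(-n^2\) for large \(n\). Adding up the pieces gives the total bound \(Ce^{-cn^2}\Ahat\), using \(d\asymp n^2\), and in particular \(o(\Ahat)\).
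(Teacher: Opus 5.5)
Your proposal is correct and follows the paper's proof essentially step for step. You use the same annulus/corner split of $\Bbox_\delta\setminus\corereg$, and the same odd-cell, translated near-shell and translated far-shell split of the residual via Lemma~\ref{lem:small-ball}, Corollary~\ref{cor:far-negligible} and Lemma~\ref{lem:odd-bound}, followed by the same comparison of log-ratios with $\Ahat=K_nF(d,t)$. The only deviation is cosmetic: the paper notes that $a_rtn^{-2/3}-\tfrac d2\log(\pi t)$ is increasing in $t$ beyond $t\asymp n^{8/3}$ and evaluates it at $t=C_0n^3$, whereas you use $tn^{-2/3}\ge C_0^{2/9}t^{7/9}$. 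Your polynomial/super-polynomial case split is unnecessary, since $n^2\log t\le C_0^{-2/3}t^{2/3}\log t=o(t^{7/9})$ uniformly in $t\ge C_0n^3$.
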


\begin{proof}
All asymptotic notation below is as \(n\to\infty\) with \(t/(n^{8/3}\log t)\to\infty\).
Since \(d\asymp n^2\), we have \(d/t\to 0\), so eventually \(\delta<r\). Also
\(K_n\core(d,t)=[1+o(1)]\Ahat\)
by the proof of Proposition~\ref{prop:primary-box}.

\emph{Outside the core.}
Set \(\mathcal A_{\mathrm{loc}}(t,\delta,r):=(\Bbox_\delta\cap\Dd_r)\setminus\corereg\).
If \(\lambda\in\mathcal A_{\mathrm{loc}}(t,\delta,r)\), then \(\|\lambda\|^2>d/t\), so Lemma~\ref{lem:small-ball} gives
\[
  |\psi(\lambda)|^{4t}
  \le e^{-3t\|\lambda\|^2/2}
  \le e^{-d/2}e^{-t\|\lambda\|^2}.
\]
Therefore
\[
  \int_{\mathcal A_{\mathrm{loc}}(t,\delta,r)}|\psi(\lambda)|^{4t}\,d\lambda
  \le e^{-d/2}\left(\frac{\pi}{t}\right)^{d/2}
  \le Ce^{-c_4 d}\core(d,t),
  \qquad
  c_4=\frac12-\frac{\log2}{2}>0,
\]
and hence \(K_n\int_{\mathcal A_{\mathrm{loc}}(t,\delta,r)}|\psi(\lambda)|^{4t}\,d\lambda=o(\Ahat)\).
By Corollary~\ref{cor:far-negligible},
\(K_n\int_{\Bbox_\delta\setminus\Dd_r}|\psi(\lambda)|^{4t}\,d\lambda=o(\Ahat)\). Since
\(\Bbox_\delta\setminus\corereg
=\mathcal A_{\mathrm{loc}}(t,\delta,r)\sqcup(\Bbox_\delta\setminus\Dd_r)\),
this proves the first claim.

\emph{Odd residual.}
By Lemma~\ref{lem:residual-decomp}, \(R_\delta=R_\delta^{\mathrm{even}}\cup R_\delta^{\mathrm{odd}}\); we bound each piece separately.
For \(\gamma\in R_\delta^{\mathrm{odd}}\), Lemma~\ref{lem:odd-bound} gives
\(|\psi(\gamma)|^2\le \frac12\), so \(|\psi(\gamma)|^{4t}\le 2^{-2t}\). Hence
\[
  (2\pi)^{-d}\int_{R_\delta^{\mathrm{odd}}}|\psi(\gamma)|^{4t}\,d\gamma\le 2^{-2t}.
\]
As in the proof of Corollary~\ref{cor:far-negligible}, \(2^{-2t}=o(\Ahat)\).

\emph{Translated near shell.}
Since
\(\Bbox_{\pi/4}\setminus\Bbox_\delta
\subset
\bigl(\Dd_r\setminus\Bbox_\delta\bigr)\cup\bigl(\Bbox_{\pi/4}\setminus\Dd_r\bigr)\),
the even residual is bounded by a translated near-shell term plus a translated far-shell term. For the translated near shell, if \(\mu\in\Dd_r\setminus\Bbox_\delta\), then some coordinate of \(\mu\) has absolute value at least \(\delta\), so \(\|\mu\|^2\ge \delta^2\), and Lemma~\ref{lem:small-ball} gives
\(|\psi(\mu)|^{4t}\le e^{-t\delta^2/2}e^{-t\|\mu\|^2}\).
Hence
\[
  \int_{\Dd_r\setminus\Bbox_\delta}|\psi(\mu)|^{4t}\,d\mu
  \le e^{-t\delta^2/2}\left(\frac{\pi}{t}\right)^{d/2}.
\]
For \(\lambda\in\Lambda\) and \(\mu\in\Dd_r\setminus\Bbox_\delta\subset\Bbox_{\pi/4}\), Lemma~\ref{fact:lambda-facts}(i),(iv) gives
\(|\psi(\lambda+\mu)|^{4t}=|\psi(\mu)|^{4t}\).
Therefore, since \(t\delta^2/2=d\),
\[
  (2\pi)^{-d}\sum_{\lambda\in\Lambda}\int_{\lambda+(\Dd_r\setminus\Bbox_\delta)}|\psi(\gamma)|^{4t}\,d\gamma
  \le
  2^{2d-n+1}(2\pi)^{-d}e^{-d}\left(\frac{\pi}{t}\right)^{d/2}
  = e^{-c_* d}\Ahat,
\]
where \(c_*=1-\frac12\log 2\) is the constant from Lemma~\ref{lem:inner-core}.

\emph{Translated far shell.}
Finally, the second inequality of Corollary~\ref{cor:far-negligible} gives \(o(\Ahat)\) for the translated far-shell contribution.
Summing the odd, translated near-shell, and translated far-shell bounds proves the second claim.

\emph{Exponential bound for \(t\ge C_0 n^3\).}
We verify each bound is \(O(e^{-cn^2})\Ahat\).
The local annulus and translated near-shell are already expressed as \(Ce^{-c_4 d}\core(d,t)\) and \(e^{-c_*d}\Ahat\), respectively; since \(d=\binom{n}{2}\ge n^2/3\), both are \(O(e^{-cn^2})\Ahat\).

For the remaining pieces, use the lower bound
\(\Ahat\ge c_0\,2^{d-n+1}(2\pi t)^{-d/2}\),
which follows from \(K_nF(d,t)=(1+O(e^{-c_*d}))\Ahat\).

Corollary~\ref{cor:far-negligible} bounds both far-shell contributions by \(2^{-n+1}E_r(n,t)\).
Since \(q_{\mathrm{sm}},q_{\mathrm{big}}<1\) are absolute constants, for~\(n\) large enough that \(4\log(1/q_{\mathrm{sm}}),4\log(1/q_{\mathrm{big}})\ge a_r n^{-2/3}\) one has \(E_r(n,t)\le 3\exp(-a_r t n^{-2/3})\), giving
\[
  \frac{2^{-n+1}E_r}{\Ahat}
  \le C\,(\pi t)^{d/2}\exp\!\bigl(-a_r t n^{-2/3}\bigr).
\]
The exponent \(a_r t n^{-2/3}-(d/2)\log(\pi t)\) is increasing for \(t\ge dn^{2/3}/(2a_r)\asymp n^{8/3}\), and at \(t=C_0 n^3\) it equals \(a_rC_0 n^{7/3}-O(n^2\log n)\ge c\,n^{7/3}\), so the ratio is \(O(e^{-cn^2})\).

The odd residual satisfies \(2^{-2t}/\Ahat\le C\,(2\pi t)^{d/2}\,2^{-2t-d+n-1}\). Since \((2t+d-n)\log 2\) grows linearly in \(t\) while \((d/2)\log(2\pi t)\) grows logarithmically, the ratio is \(O(e^{-cn^3})\le O(e^{-cn^2})\) for \(t\ge C_0n^3\).
\end{proof}

\section{Proof of main theorem}\label{sec:counting}

We now combine the primary estimate (Proposition~\ref{prop:primary-box}) and the off-core bounds (Proposition~\ref{prop:residual-estimate}) to prove Theorem~\ref{thm:main-intro}.

\begin{proof}[Proof of Theorem~\ref{thm:main-intro}]
Fix \(n\) sufficiently large and \(t\ge C_0 n^3\).
By the three-way decomposition~\eqref{eq:three-way},
\[
  \Prb(S_{4t}=0)
  =K_n\RePart\!\int_{\corereg}\psi^{4t}\,d\lambda
  +\RePart E,
\]
where \(E\) collects the off-core and residual contributions.
Proposition~\ref{prop:primary-box} gives
\[
  K_n\RePart\!\int_{\corereg}\psi(\lambda)^{4t}\,d\lambda
  =\left(1-\frac{\binom{n}{3}}{8t}
  +O\!\left(\frac{n^2}{t}+\frac{n^{5/2}}{t^{3/2}}+\frac{n^6}{t^2}+e^{-cn^2}\right)\right)\Ahat.
\]
Proposition~\ref{prop:residual-estimate} gives \(|E|=O(e^{-cn^2})\Ahat\), which is absorbed by the existing \(O(e^{-cn^2})\) term.
Since \(N_{n,4t}=2^{4nt}\Prb(S_{4t}=0)\) and \(A_{n,4t}=2^{4nt}\Ahat\), dividing by~\(\Ahat\) gives the theorem.
\end{proof}

\begin{corollary}[Uniform counting]\label{cor:uniform}
For every \(\varepsilon>0\) there exists \(K>0\) such that
\[
  \left|\frac{N_{n,4t}}{A_{n,4t}}-1\right|<\varepsilon
  \qquad\text{for all } n\ge 2 \text{ and } t\ge Kn^3.
\]
\end{corollary}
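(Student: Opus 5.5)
The plan is to combine Theorem~\ref{thm:main-intro} for large~\(n\) with the fixed-\(n\) asymptotics of Lemma~\ref{fact:fixed-n-dl10} for small~\(n\), using a compactness-style split at a threshold \(n_0=n_0(\varepsilon)\).

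\emph{Large \(n\).} Theorem~\ref{thm:main-intro} supplies constants \(C_0,c,C\) and \(n_1\) such that for \(n\ge n_1\) and \(t\ge C_0n^3\),
\[
  \left|\frac{N_{n,4t}}{A_{n,4t}}-1\right|
  \le \frac{\binom n3}{8t}+C\left(\frac{n^2}{t}+\frac{n^{5/2}}{t^{3/2}}+\frac{n^6}{t^2}+e^{-cn^2}\right).
\]
For \(t\ge Kn^3\) with \(K\ge C_0\), each term on the right is controlled:
\begin{itemize}
\item \(\binom n3/(8t)\le 1/(48K)\);
\item \(n^2/t\le 1/(Kn)\le 1/K\);
\item \(n^{5/2}/t^{3/2}\le K^{-3/2}n^{-2}\);
\item \(n^6/t^2\le K^{-2}\).
\end{itemize}
The term \(e^{-cn^2}\) does not depend on~\(K\). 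So I first choose \(n_0\ge n_1\) with \(Ce^{-cn_0^2}<\varepsilon/2\). Then I choose \(K_1\ge C_0\) large enough that the remaining terms total less than \(\varepsilon/2\). This covers all \(n\ge n_0\) and \(t\ge K_1n^3\).

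\emph{Small \(n\).} For each of the finitely many \(n\in\{2,\dots,n_0-1\}\), Lemma~\ref{fact:fixed-n-dl10} gives \(T_n\) such that \(|N_{n,4t}/A_{n,4t}-1|<\varepsilon\) for all \(t\ge T_n\). I take
\[
  K=\max\Bigl(K_1,\ \max_{2\le n<n_0}T_n/n^3\Bigr).
\]
Then \(t\ge Kn^3\) forces \(t\ge T_n\) for each small~\(n\). Since \(n_0\) depends only on~\(\varepsilon\), this is a finite maximum.

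\emph{Main obstacle.} The only delicate point is that the \(e^{-cn^2}\) error in Theorem~\ref{thm:main-intro} is not made small by enlarging~\(K\). This is why \(n_0\) must be chosen before~\(K\), with the fixed-\(n\) result absorbing all \(n<n_0\).

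A second, minor point concerns the phrase ``sufficiently large \(n\)'' in Theorem~\ref{thm:main-intro}. Its threshold \(n_1\) is an absolute constant, independent of~\(t\), so it is absorbed into \(n_0\). I would also note that the case \(n=2\) is included in Lemma~\ref{fact:fixed-n-dl10}.
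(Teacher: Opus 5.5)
Your proposal is correct and matches the paper's proof. In both, the threshold \(n_0\) is fixed first so that \(Ce^{-cn^2}<\varepsilon/2\), then \(K\ge C_0\) is taken large enough to make the polynomial error terms small for \(n\ge n_0\), and finally \(K\) is enlarged so that \(Kn^3\ge T_n\) for the finitely many \(n<n_0\) covered by Lemma~\ref{fact:fixed-n-dl10}. Your term-by-term bounds, such as \(\binom n3/(8t)\le 1/(48K)\) and \(n^{5/2}/t^{3/2}\le K^{-3/2}n^{-2}\), simply make explicit the paper's \(C'/K\) estimate.
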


\begin{proof}
For \(n\ge N_0\) (with \(N_0\) large enough that \(Ce^{-cN_0^2}<\varepsilon/2\)) and \(t\ge Kn^3\), the expansion gives \(|N_{n,4t}/A_{n,4t}-1|\le \binom{n}{3}/(8t)+Cn^2/t+Cn^{5/2}/t^{3/2}+Cn^6/t^2+Ce^{-cn^2}\le C'/K+\varepsilon/2<\varepsilon\) for \(K\) large. For each \(n<N_0\), Lemma~\ref{fact:fixed-n-dl10} provides \(T_n\) such that \(|N_{n,4t}/A_{n,4t}-1|<\varepsilon\) for \(t\ge T_n\); enlarging \(K\) so that \(Kn^3\ge T_n\) for all \(n<N_0\) completes the proof.
\end{proof}

Thus \(N_{n,4t}\sim A_{n,4t}\) as \(t/n^3\to\infty\). Conversely, when \(t=\Theta n^3\) with \(\Theta\ge C_0\) fixed, the error is \(O(\Theta^{-2}+1/n+e^{-cn^2})\), so for \(\Theta\) large and \(n\to\infty\) the leading correction \(1/(48\Theta)\) dominates and \(N_{n,4t}/A_{n,4t}\) does not converge to~\(1\): the asymptotics of \(N_{n,4t}\) change at the cubic scale.

\bibliographystyle{abbrv}
\bibliography{refs}

\end{document}